\documentclass[11pt]{amsart}
\usepackage{amsmath}
\usepackage{amsthm}
\usepackage{tikz}
\usepackage{tikz-cd}
\usepackage{hyperref}
\usepackage[left=2.5cm,right=2.5cm,top=3cm,bottom=3cm]{geometry}
\usepackage{amssymb}
\usepackage{amscd}
\usepackage{latexsym}
\usepackage{epsfig}
\usepackage{graphicx}
\usepackage{psfrag}
\usepackage{caption}
\usepackage{rotating}
\usepackage{mathtools}
\usepackage{xypic}

%%% can be removed later %%%
%\usepackage[normalem]{ulem}

%\usepackage{pifont}
%\usepackage{epsfig}

%\textwidth=15cm
%\textheight=23.1cm
%\topmargin=0.5cm
%\oddsidemargin=0.5cm
%\evensidemargin=0.5cm

\allowdisplaybreaks[3]

\usepackage{url}
\usepackage{cite}
\usepackage{array,}

\usepackage{blindtext}
\usepackage{scrextend}
\addtokomafont{labelinglabel}{\sffamily}

\newtheorem{theorem}{Theorem}
\newtheorem{proposition}[theorem]{Proposition}
\newtheorem{corollary}[theorem]{Corollary}
\newtheorem{lemma}[theorem]{Lemma}

\theoremstyle{definition}
\newtheorem{definition}[theorem]{Definition}

\newtheorem{remark}[theorem]{Remark}
\newtheorem*{question*}{Motivating Question}

\DeclareMathOperator{\GL}{GL}

\DeclareMathOperator{\Adj}{Ad}

\DeclareMathOperator{\adj}{ad}

%% -- Color coded macro for edit differentiation -- %%
%Create a boolean which will trigger whether we want color differentiation:

\newcommand{\Fam}[1]{X({#1})}

\newcommand{\eLieT}{z}  % element of Lie(T)
\newcommand{\eLieU}{y}  % element of Lie(U)
\newcommand{\eLieB}{x}  % element of Lie(B)
\newcommand{\eV}{v}  % element of V
\newcommand{\eSreg}{p}  % element of Sreg
\newcommand{\eToda}{v}  % element of O_{Toda}
  % element of O_{(S,z)}

\makeatletter
\DeclareRobustCommand{\element}[1]{\@element#1\@nil}
\def\@element#1#2\@nil{%
	#1%
	\if\relax#2\relax\else\MakeLowercase{#2}\fi}
\pdfstringdefDisableCommands{\let\element\@firstofone}
\makeatother

%% -- End color coded macro -- %%

% (end) preamble

\title[Hessenberg varieties, Slodowy slices, and integrable systems]{Hessenberg varieties, Slodowy slices, \\ and integrable systems}
%{Hessenberg varieties, Slodowy slices, and embeddings of the Toda lattice}
%\maketitle

\author{Hiraku Abe}
\address{Osaka City University Advanced Mathematical Institute 
3-3-138 Sugimoto, Sumiyoshi-ku, Osaka 558-8585, JAPAN}
\email{~~~hirakuabe@globe.ocn.ne.jp}

\author{Peter Crooks}
\address{Department of Mathematics, Northeastern University, 360 Huntington Avenue, Boston, MA 02115, USA}
\email{~~~peter.d.crooks@gmail.com}

\subjclass[2010]{14L30 (primary);
  17B80, 53D20 (secondary)}
\keywords{Hessenberg variety, integrable system, Slodowy slice, Toda lattice}

\begin{document}
	
	\maketitle

\begin{abstract}
This work is intended to contextualize and enhance certain well-studied relationships between Hessenberg varieties and the Toda lattice, thereby building on the results of Kostant, Peterson, and others. One such relationship is the fact that every Lagrangian leaf in the Toda lattice is compactified by a suitable choice of Hessenberg variety. It is then natural to imagine the Toda lattice as extending to an appropriate union of Hessenberg varieties.  

We fix a simply-connected complex semisimple linear algebraic group $G$ and restrict our attention to a particular family of Hessenberg varieties, a family that includes the Peterson variety and all Toda leaf compactifications. The total space of this family, $X(H_0)$, is shown to be a Poisson variety with a completely integrable system defined in terms of Mishchenko--Fomenko polynomials. This leads to a natural embedding of completely integrable systems from the Toda lattice to $X(H_0)$. We also show $X(H_0)$ to have an open dense symplectic leaf isomorphic to $G/Z \times S_{\text{reg}}$, where $Z$ is the centre of $G$ and $S_{\text{reg}}$ is a regular Slodowy slice in the Lie algebra of $G$. This allows us to invoke results about integrable systems on $G\times S_{\text{reg}}$, as developed by Rayan and the second author. Lastly, we witness some implications of our work for the geometry of regular Hessenberg varieties.     
\end{abstract}

\begin{scriptsize}

\tableofcontents

\end{scriptsize}

	\section{Introduction}\label{Section: Introduction}
\subsection{Motivation and context}\label{Subsection: Motivation and context}
First brought to prominence by Toda's works \cite{Toda1} and \cite{Toda2}, \textit{Toda lattices} form a broad class of completely integrable systems and have natural contexts in analysis \cite{Gesztesy,Kruger,Kamvissis}, geometry \cite{Ercolani,Adler,Bloch,Symes}, mathematical physics \cite{Flaschka1,Henon,Moser}, and representation theory \cite{KodamaWilliams,KostantSolution,KostantFlag,AdlervanMoerbeke}. Kostant and others have popularized an algebro-geometric formulation of the Toda lattice, defining one for each simply-connected complex semisimple linear algebraic group $G$ endowed with choices of a Borel subgroup $B$ and certain Lie-theoretic data. Together with this description of the Toda lattice, the works of Givental--Kim \cite{Givental}, Ciocan-Fontanine \cite{Ciocan-Fontaine}, Kostant \cite{KostantFlag}, Peterson \cite{Peterson}, and Kim \cite{Kim} imply the following remarkable fact: a certain Lagrangian leaf in the Toda lattice sits inside of a slightly larger affine variety, one whose coordinate ring is isomorphic to the quantum cohomology of the Langlands-dual flag variety $G^{\vee}/B^{\vee}$. One can compactify the leaf in question to obtain the \textit{Peterson variety}, which by results of Kostant \cite{KostantFlag}, Peterson \cite{Peterson}, Rietsch \cite{Rietsch}, and Cheong \cite{Cheong} is stratified into affine varieties having coordinate rings isomorphic to quantum cohomology rings of partial flag varieties. This is part of a much larger circle of ideas connecting Toda lattices, the Peterson variety, and quantum cohomology. 

To broaden parts of the discussion started above, one notes that the Peterson variety is an example of a \textit{Hessenberg variety}. Introduced by the works \cite{DeMariProcesiShayman,DeMariShayman}, Hessenberg varieties are closed subvarieties of $G/B$ with natural manifestations in topology \cite{TymoczkoThesis,TymoczkoPure,Precup,AbeCrooks,AbeCombinatorics,AyzenbergBuchstaber}, algebraic geometry \cite{InskoYong,AbeSelecta,InskoPrecup,AbePreprint}, combinatorics \cite{Guay-Paquet,ShareshianAdvances,AndersonTymoczko,Horiguchi}, and representation theory \cite{AbeIMRN,Brosnan,TAbe,HaradaPrecup,BalibanuPeterson}. It turns out that each Lagrangian leaf in the Toda lattice is naturally compactified by an appropriate Hessenberg variety (as we explain momentarily), generalizing the above-discussed relationship between the Peterson variety and a specific leaf. The following question is then natural. 

\begin{question*}\label{Question: Question 1}
Does the Toda lattice embed into a completely integrable system on (the total space of) a family of Hessenberg varieties, such that the family includes all Toda leaf compactifications? 
\end{question*}

Our approach stands to benefit from two relevant considerations, the first being a description of Hessenberg varieties in families. A common method is to form the family of all Hessenberg varieties associated to a given \textit{Hessenberg subspace} $H\subseteq\mathfrak{g}:=\mathrm{Lie}(G)$, a family that can be defined as a certain surjective morphism $G\times_BH\rightarrow\mathfrak{g}$. This family includes all Toda leaf compactifications if we set $H$ equal to $H_0$, defined to be the sum of $\mathrm{Lie}(B)$ and the negative simple root spaces.\footnote{This standard fact can be seen as an immediate consequence of Corollary \ref{Corollary: Implications for Hessenberg varieties}(i) and \cite[Theorem 2.4]{KostantSolution}. We emphasize that this fact is not invoked in any of our proofs.} Accordingly, $G\times_B H_0\rightarrow\mathfrak{g}$ is a natural candidate for the family sought in the motivating question.

The second relevant consideration has its origins in the following fact, proved by Kostant (see \cite[Theorem 2.4]{KostantSolution}): each Toda leaf is canonically an open dense subvariety in the $G/Z$-stabilizer of a suitably chosen regular element in $\mathfrak{g}$, where $Z$ is the (necessarily finite) centre of $G$ and $G/Z$ is the adjoint group. This fact has an apparent counterpart in the work of Rayan and the second author \cite{CrooksRayan,CrooksBulletin} on the symplectic variety $G\times S_{\text{reg}}$, where $S_{\text{reg}}\subseteq\mathfrak{g}$ is the Slodowy slice determined by a principal $\mathfrak{sl}_2$-triple. A very slight adaptation of this work gives a canonical \textit{abstract integrable system} (cf. \cite{Fernandes}) on the symplectic quotient $G/Z\times S_{\text{reg}}$. This system is a certain kind of isotropic foliation of $G/Z\times S_{\text{reg}}$, and the counterpart of Kostant's result is that each isotropic leaf is isomorphic to an appropriate regular element's $G/Z$-stabilizer. In particular, the leaves in $G/Z\times S_{\text{reg}}$ may be viewed as slight enlargements of the Toda leaves. It is then reasonable to imagine integrable systems on $G/Z\times S_{\text{reg}}$ as being relevant to extending the Toda lattice.  

\subsection{Overview of main results}
This paper gives an affirmative answer to our motivating question in the context outlined above, elucidating new connections between integrable systems, Hessenberg varieties, and Slodowy slices in the process. In the interest of being more precise, let us fix the following Lie-theoretic data:
\begin{itemize}
\item a connected, simply-connected complex semisimple linear algebraic group $G$ having Lie algebra $\mathfrak{g}$, centre $Z\subseteq G$, and rank $r$.
\item a choice of $r$-many homogeneous, algebraically independent generators $f_1,\ldots,f_r$ of the algebra $\mathbb{C}[\mathfrak{g}]^G=\mathrm{Sym}(\mathfrak{g}^*)^G$,
\item a choice of positive (resp. negative) Borel subgroup $B\subseteq G$ (resp. $B_{-}\subseteq G$) having Lie algebra $\mathfrak{b}\subseteq\mathfrak{g}$ (resp. $\mathfrak{b}_{-}\subseteq\mathfrak{g}$), together with the induced Cartan subalgebra $\mathfrak{t}:=\mathfrak{b}\cap\mathfrak{b}_{-}$, roots $\Delta\subseteq\mathfrak{t}^*$, positive roots $\Delta_{+}\subseteq\Delta$, negative roots $\Delta_{-}\subseteq\Delta$, and simple roots $\Pi\subseteq\Delta_{+}$, and 
\item for each $\alpha\in\Pi$, a choice of root vector $e_{\alpha}\in\mathfrak{g}_{\alpha}^{\times}:=\mathfrak{g}_{\alpha}\setminus\{0\}$.
\end{itemize}
One then has the locally closed subvariety of $\mathfrak{b}_{-}$ defined by
$$\mathcal{O}_{\text{Toda}}:=\mathfrak{t}+\sum_{\alpha\in\Pi}\mathfrak{g}_{-\alpha}^{\times}:=\left\{\eToda_{(0)}+\sum_{\alpha\in\Pi}\eToda_{-\alpha}\text{ }:\text{ }\eToda_{(0)}\in\mathfrak{t},\text{ }\eToda_{-\alpha}\in\mathfrak{g}_{-\alpha}^{\times}\text{ for all }\alpha\in\Pi\right\},$$ where $\mathfrak{g}_{-\alpha}^{\times}:=\mathfrak{g}_{-\alpha}\setminus\{0\}$ for all $\alpha\in\Pi$. The Killing form induces an isomorphism $\mathfrak{b}_{-}\cong\mathfrak{b}^*$, which in turn restricts to an isomorphism between $\mathcal{O}_{\text{Toda}}$ and a coadjoint orbit of $B$. It follows that this orbit's Kirillov--Kostant--Souriau symplectic form determines a symplectic form on $\mathcal{O}_{\text{Toda}}$. Now set $\zeta:=-\sum_{\alpha\in\Pi}e_{\alpha}$ and define functions $\sigma_1,\ldots,\sigma_r:\mathcal{O}_{\text{Toda}}\rightarrow\mathbb{C}$ by
\begin{equation}\label{Equation: Definition of Toda lattice}\sigma_i(\eToda):=f_i(\eToda+\zeta),\quad \eToda\in\mathcal{O}_{\text{Toda}},\quad i=1,\ldots,r.
\end{equation}
These functions turn out to form a completely integrable system on $\mathcal{O}_{\text{Toda}}$, called the \textit{Toda lattice}.

Our first main result relates the Toda lattice to the integrable systems studied in \cite{CrooksBulletin} and \cite{CrooksRayan}, the relevant parts of which we now sketch. Accordingly, note that each of our simple root vectors $e_{\alpha}\in\mathfrak{g}_{\alpha}^{\times}$ pairs with a unique $e_{-\alpha}\in\mathfrak{g}_{-\alpha}^{\times}$ to give $1$ under the Killing form. Set $\xi:=\sum_{\alpha\in\Pi}e_{-\alpha}$ and let $h\in\mathfrak{t}$ be determined by the conditions $\alpha(h)=-2$ for all $\alpha\in\Pi$. Also let $\eta:=\sum_{\alpha\in\Pi}c_{\alpha}e_{\alpha}$, where the $c_{\alpha}$ are non-zero complex numbers for which $(\xi,h,\eta)$ is an $\mathfrak{sl}_2$-triple. This triple is necessarily regular (a.k.a. principal), meaning that its associated Slodowy slice $S_{\text{reg}}:=\xi+\mathrm{ker}(\adj_{\eta})\subseteq\mathfrak{g}$ is $r$-dimensional. It turns out that the Killing form and a left trivialization of $G$'s cotangent bundle realize $G\times S_{\text{reg}}$ as a symplectic subvariety in $T^*G$ (cf. \cite[Statement (1.19)]{Guillemin}, \cite[Theorem 1]{Bielawski}), and that $G/Z\times S_{\text{reg}}$ can be obtained as a symplectic quotient of $G\times S_{\text{reg}}$. At the same time, our generators $f_1,\ldots,f_r$ and element $\zeta$ determine a collection of \textit{Mishchenko--Fomenko polynomials}, $f_1^{\zeta},\ldots,f_{\ell}^{\zeta}\in\mathbb{C}[\mathfrak{g}]$, %\mathrm{Sym}(\mathfrak{g}^*)$, 
$\ell:=\dim(B)$. Let us pull these polynomial functions back to $G/Z\times S_{\text{reg}}$ along the map $\mu:G/Z\times S_{\text{reg}}\rightarrow\mathfrak{g}$, $\mu(gZ,\eSreg)=\Adj_g(\eSreg)$, thereby obtaining
\begin{equation}\label{Equation: Definition of other integrable system}\tau_i:= \mu^*(f_i^{\zeta}): G/Z\times S_{\text{reg}}\rightarrow \mathbb{C},
%\mu^*(f_i^{\zeta}),
\quad i=1,\ldots,\ell.\end{equation}
The functions $\tau_1,\ldots,\tau_{\ell}$ are known to form a completely integrable system on the $2\ell$-dimensional symplectic variety $G/Z\times S_{\text{reg}}$ (cf. \cite[Theorem 17]{CrooksRayan}). We connect this system to the Toda lattice as follows. 

\begin{theorem}\label{Theorem: Embedding of integrable systems}
There is a natural embedding of completely integrable systems $\kappa:\mathcal{O}_{\emph{Toda}}\rightarrow G/Z\times S_{\emph{reg}}$, where $\mathcal{O}_{\emph{Toda}}$ is equipped with the Toda lattice \eqref{Equation: Definition of Toda lattice} and $G/Z\times S_{\emph{reg}}$ is equipped with the system \eqref{Equation: Definition of other integrable system}.
\end{theorem}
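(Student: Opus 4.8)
The plan is to obtain $\kappa$ from a canonical factorization of the shifted element $v+\zeta$, and then to establish the embedding, symplectic, and integrability properties in turn. Throughout write $\mathfrak{n}:=\bigoplus_{\alpha\in\Delta_+}\mathfrak{g}_\alpha$ and $\mathfrak{n}_-:=\bigoplus_{\alpha\in\Delta_+}\mathfrak{g}_{-\alpha}$, so that $\mathfrak{b}=\mathfrak{t}\oplus\mathfrak{n}$.

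\medskip\noindent\textbf{Construction of $\kappa$ and the embedding property.} Given $v=v_{(0)}+\sum_{\alpha\in\Pi}v_{-\alpha}\in\mathcal{O}_{\text{Toda}}$, set $x:=v+\zeta\in\mathfrak{g}$ and write $v_{-\alpha}=s_\alpha e_{-\alpha}$ with $s_\alpha\in\mathbb{C}^\times$. I would first choose $t\in T$ with $\alpha(t)=s_\alpha$ for every $\alpha\in\Pi$; such a $t$ exists and is unique modulo $Z$, because for simply-connected $G$ the homomorphism $T\to(\mathbb{C}^\times)^\Pi$, $t\mapsto(\alpha(t))_{\alpha\in\Pi}$, is an isogeny with kernel $\bigcap_{\alpha\in\Pi}\aker(\alpha)=Z$. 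A direct check shows that $\Adj_t(x)$ has $\mathfrak{n}_-$-component equal to $\xi=\sum_{\alpha\in\Pi}e_{-\alpha}$ and remaining component $v_{(0)}-\sum_{\alpha\in\Pi}s_\alpha e_\alpha\in\mathfrak{b}$, so $\Adj_t(x)\in\xi+\mathfrak{b}$. Invoking Kostant's theorem that the adjoint action is an isomorphism $N\times S_{\text{reg}}\xrightarrow{\sim}\xi+\mathfrak{b}$, I write $\Adj_t(x)=\Adj_n(\eSreg)$ for unique $n\in N$ and $\eSreg\in S_{\text{reg}}$, whence $x=\Adj_{t^{-1}n}(\eSreg)$, and I define $\kappa(v):=(t^{-1}n\,Z,\eSreg)\in G/Z\times S_{\text{reg}}$. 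Since $Z$ is central it acts trivially under $\Adj$, so replacing $t$ by $tz$ leaves $\Adj_t(x)$, hence $n$ and $\eSreg$, unchanged and alters $t^{-1}n$ only within its $Z$-coset; thus $\kappa$ is well defined, and by construction $\mu\circ\kappa(v)=v+\zeta$. This last identity makes $\kappa$ injective and forces $d\kappa$ to be injective (the translation $v\mapsto v+\zeta$ is an embedding of $\mathcal{O}_{\text{Toda}}$ into $\mathfrak{g}$); as $\kappa$ is a morphism assembled from the above isogeny and the Kostant isomorphism, it is a locally closed embedding of varieties.

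\medskip\noindent\textbf{Symplecticity (the main obstacle).} The crux is to prove $\kappa^*\omega=\omega_{\text{KKS}}$, where $\omega$ is the symplectic form on $G/Z\times S_{\text{reg}}$ and $\omega_{\text{KKS}}$ is the Kirillov--Kostant--Souriau form on the coadjoint $B$-orbit $\mathcal{O}_{\text{Toda}}$. I expect this to be the principal difficulty, since it requires transporting $\omega_{\text{KKS}}$ through both the torus normalization and the Kostant section. The cleanest route is to realize both forms as Hamiltonian reductions of the canonical form on $T^*G\cong G\times\mathfrak{g}$: the slice $G\times S_{\text{reg}}$, and hence its $Z$-quotient $G/Z\times S_{\text{reg}}$, arises by Whittaker--Slodowy reduction along $N$, while $\mathcal{O}_{\text{Toda}}$ arises as the reduction producing the Kostant--Toda picture, with $\zeta$ playing the role of the Whittaker datum in both. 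The goal is then to exhibit $\kappa$ as the map induced on reductions by a manifestly symplectic map of the pre-reduced spaces, so that $\kappa^*\omega=\omega_{\text{KKS}}$ becomes automatic rather than a coordinate computation.

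\medskip\noindent\textbf{Matching the integrable systems.} For the argument-shift construction one expands $f_j(\eV+s\zeta)=\sum_k s^k f_{j,k}(\eV)$ with $f_{j,0}=f_j$, and the $\ell$ Mishchenko--Fomenko polynomials $f_1^\zeta,\ldots,f_\ell^\zeta$ include each generator $f_j$ as its degree-zero shift term. For the corresponding indices one then computes $\kappa^*\tau=f_j\circ\mu\circ\kappa=f_j(\eV+\zeta)=\sigma_j$, so the $r$ Toda Hamiltonians are recovered as pullbacks of integrals of the larger system. Since $\kappa$ is symplectic and the $\tau_i$ pairwise Poisson-commute, the functions $\kappa^*\tau_1,\ldots,\kappa^*\tau_\ell$ pairwise Poisson-commute on the $2r$-dimensional variety $\mathcal{O}_{\text{Toda}}$; as this family already contains the $r$ functionally independent functions $\sigma_1,\ldots,\sigma_r$, which form a maximal involutive system there, every $\kappa^*\tau_i$ must be functionally dependent on $\sigma_1,\ldots,\sigma_r$ on a dense open set. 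Consequently $\kappa$ pulls the Lagrangian foliation of the system \eqref{Equation: Definition of other integrable system} back to the Toda foliation of \eqref{Equation: Definition of Toda lattice}, which is precisely the assertion that $\kappa$ is an embedding of completely integrable systems.
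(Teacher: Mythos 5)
Your construction of $\kappa$ is sound, and it differs from the paper's in a useful way: you normalize so that $\mu\circ\kappa$ is the translation $v\mapsto v+\zeta$, whereas the paper's $\kappa$ satisfies $\mu\circ\kappa=\mathrm{inclusion}$. Your normalization pays off in the Hamiltonian-matching step: since $f^{\zeta}_j=f_j$ for $j\le r$, you get $\kappa^*(\tau_j)=f_j(\cdot+\zeta)=\sigma_j$ on the nose, while the paper must invoke Remark \ref{rem: decomposition} to write each $\sigma_i$ as a sum $\sum_j c_{ij}\kappa^*(\tau_j)$ of Mishchenko--Fomenko pullbacks. (Your closing discussion of foliations and functional dependence is unnecessary: Definition \ref{Definition: Embedding of completely integrable systems}(iii) only requires each $\sigma_i$ to be a linear combination of the $\kappa^*(\tau_j)$, which you already have.)

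The genuine gap is exactly where you say the crux is: condition (ii), $\kappa^*\Omega_{\text{reg}}=\omega_{\text{Toda}}$, is never proved. What you offer is a plan --- realize both spaces as Hamiltonian reductions of $T^*G$ and ``exhibit $\kappa$ as the map induced on reductions by a manifestly symplectic map of the pre-reduced spaces'' --- but that exhibition is the content of the theorem and is not carried out. Nor is it routine: $G\times S_{\text{reg}}$ enters the paper as a symplectic cross-section of $T^*G\cong G\times\mathfrak{g}$ (equivalently a one-sided Whittaker reduction attached to $\xi$), while $\mathcal{O}_{\text{Toda}}$ with its Kirillov--Kostant--Souriau form is a coadjoint $B$-orbit, i.e.\ naturally a reduction of $T^*B$, not of $T^*G$; realizing the Toda phase space as a Kazhdan--Kostant--Sternberg-type reduction of $T^*G$ and then checking that your specific $\kappa$ (built from the torus isogeny and the Kostant section) is the induced map on reductions requires work comparable to a direct computation, and the Whittaker data on the two sides sit in opposite nilradicals, so the compatibility is not automatic. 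By contrast, the paper's proof spends most of its technical effort precisely here: Lemma \ref{lem: property of nu 10} computes the differential of the $B/Z$-component of $\kappa$ via Baker--Campbell--Hausdorff modulo $\mathfrak{g}_{(\geq 2)}$, and Proposition \ref{prop: kappa is symplectic} then evaluates $\kappa^*\Omega_{\text{reg}}$ using the explicit formula \eqref{Equation: Symplectic form}, the inclusion $\mathrm{ker}(\adj_{\eta})\subseteq\mathfrak{u}$, and the Killing-orthogonality of $\mathcal{O}_{\text{Toda}}$ to $\mathfrak{g}_{(\geq 2)}$, recovering the expression \eqref{Equation: Modified KKS}. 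Some analogue of this analysis, adapted to your shifted map, is required and missing.

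A secondary, fixable issue: injectivity of $\kappa$ together with injectivity of $d\kappa$ does not yield a locally closed immersion of algebraic varieties (there are bijective, everywhere-immersive morphisms onto non-normal images, e.g.\ the normalization of a nodal cubic with one preimage of the node removed). You need either an explicit inverse morphism on the image, as in the paper --- where the image is identified as $\{(bZ,\eSreg):\Adj_b(\eSreg)\in\mathcal{O}_{\text{Toda}}\}$ using Proposition \ref{Lemma: B-stabilizer}, shown to be locally closed, and inverted by $(bZ,\eSreg)\mapsto\Adj_b(\eSreg)$ --- or to make your ``assembled from isomorphisms'' remark precise, e.g.\ by factoring $\kappa$ as a closed immersion of $\mathcal{O}_{\text{Toda}}$ into $T/Z\times(\xi+\mathfrak{b})$, followed by Kostant's isomorphism and the closed immersion $B/Z\times S_{\text{reg}}\hookrightarrow G/Z\times S_{\text{reg}}$.
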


By \textit{embedding of completely integrable systems}, we mean that $\kappa$ is a locally closed immersion of symplectic varieties satisfying $\sigma_i\in\mathrm{span}_{\mathbb{C}}\{\kappa^*(\tau_1),\ldots,\kappa^*(\tau_{\ell})\}$ for all $i=1,\ldots,r$. Remark \ref{Remark: Context for definition} offers some context for this definition.

We next study the family of Hessenberg varieties mentioned in the aftermath of our motivating question. More explicitly, we consider the \textit{Hessenberg subspace} of $H_0\subseteq\mathfrak{g}$ defined by
$$H_0:=\mathfrak{b}\oplus\bigoplus_{\alpha\in\Pi}\mathfrak{g}_{-\alpha}.$$ Note that $H_0$ is a $B$-invariant subspace of $\mathfrak{g}$, and as such determines a $G$-equivariant vector bundle on $G/B$. Let $\Fam{H_0}:= G\times_B H_0$ denote the total space of this bundle, and consider the morphism $\mu_0:\Fam{H_0}\rightarrow\mathfrak{g}$, $\mu_0([(g,x)])=\Adj_g(x)$. Each fibre $\mu_0^{-1}(x)=:X(x,H_0)$ is a \textit{Hessenberg variety}\footnote{While this fibre-wise definition of Hessenberg varieties is slightly non-standard, it is equivalent to the more common definition. For further details, we refer the reader to Remark \ref{Remark: Common Hessenberg}.} and $\mu_0$ is the family of all Hessenberg varieties associated to $H_0$. 

With the above-mentioned family in mind, consider the open, $B$-invariant subset of $H_0$ defined by $$H_0^{\times}:=\mathfrak{b}+\sum_{\alpha\in\Pi}\mathfrak{g}_{-\alpha}^{\times}.$$ This gives rise to the open subvariety $X(H_0^{\times}):=G\times_B H_0^{\times}\subseteq X(H_0)$, which we relate to the geometry of $X(H_0)$ as follows.  

\begin{theorem}\label{Theorem: Poisson structure}
The variety $X(H_0)$ has a natural Poisson structure, and $X(H_0^{\times})$ is the unique open dense symplectic leaf.  
\end{theorem}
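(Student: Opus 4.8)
The plan is to realize $X(H_0)$ as a Poisson reduction of the cotangent bundle $T^*G$ and to read off the symplectic leaves from that description. Using the left trivialization together with the Killing form $\langle\cdot,\cdot\rangle$, I identify $T^*G\cong G\times\mathfrak{g}$ with its canonical symplectic form $\omega$; in the induced trivialization $T_{(g,Y)}(G\times\mathfrak{g})\cong\mathfrak{g}\oplus\mathfrak{g}$ one has $\omega((\xi_1,V_1),(\xi_2,V_2))=\langle V_2,\xi_1\rangle-\langle V_1,\xi_2\rangle+\langle Y,[\xi_1,\xi_2]\rangle$. Writing $N:=G\times H_0\subseteq G\times\mathfrak{g}$, I observe that $X(H_0)=N/B$ for the action $b\cdot(g,x)=(gb^{-1},\Adj_b x)$, which is the restriction to $B$ of the cotangent lift of right translation and is therefore Hamiltonian, and which preserves $N$ because $H_0$ is $\Adj_B$-invariant. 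So the first task is to verify that $N$ is coisotropic, and the second is to identify its characteristic foliation with part of the $B$-action, so that Poisson reduction applies.

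First I would compute the symplectic orthogonal. The formula above gives $TN^\omega=\{(\xi,[Y,\xi]):\xi\in H_0^{\perp}\}$, where $H_0^{\perp}$ is the Killing-orthogonal complement. Since $H_0=\mathfrak{b}\oplus\bigoplus_{\alpha\in\Pi}\mathfrak{g}_{-\alpha}$, one finds $H_0^{\perp}=\bigoplus_{\beta\in\Delta_{+}\setminus\Pi}\mathfrak{g}_{\beta}\subseteq\mathfrak{n}$, where $\mathfrak{n}:=\bigoplus_{\gamma\in\Delta_{+}}\mathfrak{g}_{\gamma}$. Coisotropy, i.e. $TN^\omega\subseteq TN=\mathfrak{g}\oplus H_0$, then reduces to the root-theoretic claim $[H_0,H_0^{\perp}]\subseteq H_0$: for $\beta$ a non-simple positive root and $\alpha$ simple, $\beta-\alpha$ has height $\geq 1$, so $[\mathfrak{g}_{-\alpha},\mathfrak{g}_{\beta}]\subseteq\mathfrak{g}_{\beta-\alpha}\subseteq\mathfrak{n}\subseteq H_0$, while $[\mathfrak{b},H_0^{\perp}]\subseteq\mathfrak{n}$. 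Comparing $TN^\omega$ with the generators $(\zeta,[Y,\zeta])$, $\zeta\in\mathfrak{b}$, of the $B$-orbits then shows that $TN^\omega$ is exactly the subfoliation cut out by $\zeta\in H_0^{\perp}$; and since $H_0^{\perp}$ is an ideal of $\mathfrak{b}$, it integrates to a normal unipotent subgroup $U'\trianglelefteq B$ whose orbits are the characteristic leaves. With these facts the Marsden--Ratiu reduction theorem (applied to the symplectic $T^*G$, the coisotropic $N$, and the subbundle $E$ tangent to the $B$-orbits, for which the compatibility condition $E^\omega\subseteq TN$ holds precisely because $TN^\omega\subseteq E$) endows $X(H_0)=N/B$ with a canonical Poisson structure; the left $G$-action descends to a Hamiltonian action with moment map $\mu_0$, so the structure is manifestly natural.

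For the second assertion I would factor the reduction as $N\to N/U'=:N_1\to N_1/R=X(H_0)$, where $R:=B/U'\cong T\ltimes\mathbb{C}^{r}$ has Lie algebra $\mathfrak{r}=\mathfrak{t}\oplus\mathrm{span}\{e_{\alpha}:\alpha\in\Pi\}$. Collapsing the characteristic leaves makes $N_1$ a symplectic variety of dimension $2\ell+2r$, on which $R$ acts Hamiltonianly with a moment map $\Phi$ whose $\mathfrak{g}_{\alpha}$-component at $(g,Y)$ is $\langle Y,e_{\alpha}\rangle=\langle y_{-\alpha},e_{\alpha}\rangle$. The symplectic leaves of $X(H_0)=N_1/R$ are the reduced spaces $\Phi^{-1}(R\cdot\lambda)/R$, of dimension $2\ell-2r+\dim(R\cdot\lambda)$, so a leaf has top dimension $2\ell$ exactly when $\lambda$ lies in an open ($2r$-dimensional) coadjoint orbit of $R$. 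A short computation of the coadjoint action of the solvable group $R$ shows that the orbit through $\lambda=(\psi,(\chi_{\alpha}))\in\mathfrak{t}^{*}\oplus V^{*}$, $V=\mathrm{span}\{e_{\alpha}\}$, is open if and only if every $\chi_{\alpha}\neq 0$: the $V$-directions then span $\mathfrak{t}^{*}$ and the $\mathfrak{t}$-directions span $V^{*}$, using that $\Pi$ is a basis of $\mathfrak{t}^{*}$. Since $\chi_{\alpha}=\langle y_{-\alpha},e_{\alpha}\rangle$ vanishes precisely when $y_{-\alpha}=0$, the full-rank locus of the Poisson structure is exactly $X(H_0^{\times})$. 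This locus is open and dense (its complement is where some $y_{-\alpha}=0$) and connected (as $\mathfrak{g}_{-\alpha}^{\times}\cong\mathbb{C}^{\times}$ and $G$ are connected), hence a single symplectic leaf, and being the union of all top-dimensional leaves it is the unique open dense one.

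As for the main obstacle: the coisotropy step is short once the height computation is isolated, and the construction of the Poisson structure is then formal. The delicate part will be the leaf analysis, where one must correctly identify the characteristic foliation with the $U'$-orbits so that $N_1$ is a genuine symplectic variety, set up the residual Hamiltonian $R$-action and its moment map, and carry out the coadjoint-orbit dimension count for the non-reductive group $R$. I also expect to spend care ensuring all quotients behave well in the algebraic category—it is cleanest to verify the rank statement pointwise on the Poisson bivector rather than to rely on global quotient manifolds—and, as a consistency check, to match the resulting top leaf $X(H_0^{\times})$ with the symplectic variety $G/Z\times S_{\text{reg}}$ of dimension $2\ell$.
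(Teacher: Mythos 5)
Your proposal is correct, but it reaches the theorem by a genuinely different route than the paper. The paper never checks coisotropy of $G\times H_0$: it quotients all of $G\times\mathfrak{g}\cong T^*G$ by the free, proper, Hamiltonian $B$-action with moment map $\rho(g,x)=-\pi_{\mathfrak{b}_{-}}(x)$, invokes the Ortega--Ratiu description of such quotients to make $(G\times\mathfrak{g})/B$ a Poisson manifold whose symplectic leaves are the sets $\rho^{-1}(\mathcal{O})/B$ over coadjoint $B$-orbits $\mathcal{O}\subseteq\mathfrak{b}_{-}$, and then observes that $X(H_0)=\rho^{-1}(V_{\text{Toda}})/B$ is a union of such leaves, hence a Poisson submanifold; its leaves are the $\Upsilon(\mathcal{O}_{(S,z)})$ indexed by the $B$-orbits in $V_{\text{Toda}}$, which were classified together with their closure order in Section 3, and the unique open dense leaf falls out of the poset $\mathcal{I}$ having unique maximal element $(\Pi,0)$. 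You instead isolate the coisotropy of $N=G\times H_0$ via the root-height computation $[H_0,H_0^{\perp}]\subseteq H_0$ with $H_0^{\perp}=\mathfrak{g}_{(\geq 2)}$, identify the null foliation with the orbits of the normal unipotent subgroup $U'$ integrating $H_0^{\perp}$, and run reduction in stages through the symplectic $N_1=N/U'$ with residual solvable group $R=B/U'$, deciding openness of leaves by a coadjoint-orbit dimension count for $R$ and uniqueness by connectivity of the full-rank locus. These computations all check out (your formula for $TN^{\omega}$, the ideal property of $\mathfrak{g}_{(\geq 2)}$, and the criterion that the $R$-orbit of $(\psi,\chi)$ is open iff every $\chi_{\alpha}\neq 0$ are all correct). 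It is worth noting that the two leaf classifications are secretly identical: the $B$-action on $V_{\text{Toda}}$ factors through $R$, and the Killing form identifies $V_{\text{Toda}}$ with $\mathfrak{r}^*=(\mathfrak{b}/\mathfrak{g}_{(\geq 2)})^*$ equivariantly, so the paper's orbits $\mathcal{O}_{(S,z)}$ are exactly your coadjoint $R$-orbits. What the paper's route buys is the explicit stratification of $X(H_0)$ by all leaves and their closure relations, which it needs later (Corollary \ref{Corollary: Closure order}, Proposition \ref{Proposition: Structure of strata}); what your route buys is a structural explanation of why $H_0$ gives a Poisson family at all (coisotropy of $G\times H_0$) and a path to uniqueness of the open dense leaf that avoids computing any closure order. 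Two small items remain to make your argument airtight, both of which the paper handles explicitly and neither of which threatens your conclusions: you must verify that the $B$-action (equivalently the $U'$- and $R$-actions) is free and proper and that the fibres of the moment map are connected, since the identification of symplectic leaves of the quotient with the reduced spaces $\Phi^{-1}(R\cdot\lambda)/R$ requires exactly these hypotheses.
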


We include descriptions of the symplectic leaves as symplectic varieties, along with an explanation of how these leaves piece together and stratify $X(H_0)$ (see Subsection \ref{Subsection: A symplectic structure}). However, $X(H_0^{\times})$ is the leaf of greatest importance to our work. In particular, it is by means of this leaf that we can relate the geometries of $G/Z\times S_{\text{reg}}$ and $\Fam{H_0}$.   

\begin{theorem}\label{Theorem: Open immersion}
There is a natural open immersion $\varphi:G/Z\times S_{\emph{reg}}\rightarrow X(H_0)$ for which the diagram
\begin{align*}
\xymatrix{
	G/Z\times S_{\emph{reg}} \ar[rd]_{\mu} \ar[rr]^{\varphi} & & \Fam{H_0} \ar[ld]^{\mu_0} \\
	& \mathfrak{g} & 
}
\end{align*}
commutes. The image of $\varphi$ is $X(H_0^{\times})$, onto which $\varphi$ is a symplectomorphism.
\end{theorem}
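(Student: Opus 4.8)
The plan is to define $\varphi$ by the obvious formula and to reduce all three assertions to a single transversality statement for the adjoint $B$-action on $H_0^{\times}$.

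First I would set $\varphi(gZ,p):=[(g,p)]$ for $g\in G$ and $p\in S_{\text{reg}}$. Since $Z\subseteq B$ acts trivially on $\mathfrak{g}$, one has $[(gz,p)]=[(g,\Adj_{z}(p))]=[(g,p)]$ for $z\in Z$, so the formula descends to $G/Z$ and gives a morphism; the identity $\mu_0(\varphi(gZ,p))=\Adj_g(p)=\mu(gZ,p)$ makes the triangle commute. I would then record the inclusion $S_{\text{reg}}\subseteq H_0^{\times}$: writing $p=\xi+w$ with $w\in\aker(\adj_{\eta})$, the representation theory of the $\mathfrak{sl}_2$-triple $(\xi,h,\eta)$ forces $\aker(\adj_{\eta})\subseteq\mathfrak{n}:=\bigoplus_{\beta\in\Delta_+}\mathfrak{g}_{\beta}$, so the $\mathfrak{g}_{-\alpha}$-component of $p$ equals $e_{-\alpha}\neq 0$ for every $\alpha\in\Pi$. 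Hence $\varphi$ lands in $X(H_0^{\times})$.

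The crux is the claim that the action map $B\times S_{\text{reg}}\to H_0^{\times}$, $(b,p)\mapsto\Adj_b(p)$, is surjective with fibres exactly the orbits of $Z$ acting by $z\cdot(b,p)=(bz,p)$; equivalently, every $B$-orbit in $H_0^{\times}$ meets $S_{\text{reg}}$ in one point, with stabiliser $Z_B(p)=Z$. I would prove this using $B=TN$ in two stages. The torus acts on $\prod_{\alpha\in\Pi}\mathfrak{g}_{-\alpha}^{\times}$ through the characters $-\alpha$, and since the simple roots form a basis of $\mathfrak{t}^*$ with $\bigcap_{\alpha\in\Pi}\aker(\alpha)=Z$, the quotient $T/Z$ acts simply transitively there; thus a unique $tZ$ moves the $\mathfrak{g}_{-\alpha}$-components of any $x\in H_0^{\times}$ to $(e_{-\alpha})_{\alpha}$, carrying $x$ into $\xi+\mathfrak{b}$. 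One checks $\xi+\mathfrak{b}$ is $N$-invariant, since for $n\in N$ the raising operators in $\Adj_n$ cannot create new simple negative-root components, so it remains to show $N\times S_{\text{reg}}\to\xi+\mathfrak{b}$ is an isomorphism. This is the standard Slodowy-slice transversality: using the $\adj_h$-grading, the $\mathfrak{sl}_2$-decomposition $\mathfrak{g}=[\xi,\mathfrak{g}]\oplus\aker(\adj_{\eta})$ restricts to $\mathfrak{b}=[\xi,\mathfrak{n}]\oplus\aker(\adj_{\eta})$, which identifies the differential of this map at $(1,\xi)$ with an isomorphism; the contracting $\mathbb{C}^{\times}$-action determined by $h$ then upgrades this to a global isomorphism of affine spaces. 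The two stages together prove the claim, including $Z_B(p)=Z$.

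Granting this, the action map descends to a $B$-equivariant isomorphism $(B/Z)\times S_{\text{reg}}\xrightarrow{\sim}H_0^{\times}$, where $B$ acts by left translation on $B/Z$ and trivially on $S_{\text{reg}}$. Applying $G\times_B(-)$ and using $G\times_B(B/Z)\cong G/Z$ yields isomorphisms $X(H_0^{\times})=G\times_B H_0^{\times}\cong G\times_B((B/Z)\times S_{\text{reg}})\cong (G/Z)\times S_{\text{reg}}$ whose inverse is exactly $\varphi$; as $H_0^{\times}$ is open in $H_0$, this exhibits $\varphi$ as an open immersion with image $X(H_0^{\times})$. It then remains to check that $\varphi$ pulls the symplectic form of the leaf $X(H_0^{\times})$ from Theorem \ref{Theorem: Poisson structure} back to the reduced form on $(G/Z)\times S_{\text{reg}}$ coming from $T^*G$; since both forms live over $\mathfrak{g}$ via $\mu=\mu_0\circ\varphi$, I expect the Poisson structure of Theorem \ref{Theorem: Poisson structure} to have been built so that this holds on the open leaf, reducing the symplectic statement to unwinding that construction. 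The main obstacle is the transversality claim above—the existence and uniqueness of the intersection with $S_{\text{reg}}$ and the computation $Z_B(p)=Z$—after which the open immersion and the symplectomorphism are largely bookkeeping.
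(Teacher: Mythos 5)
Your construction of the open immersion and the identification of its image are correct, and they take a genuinely different route from the paper's. The paper (Proposition \ref{Proposition: Open immersion}) proves injectivity via the bijection \eqref{Equation: Parametrization of regular orbits} together with the stabilizer computation of Proposition \ref{Lemma: B-stabilizer}, shows the differential is bijective, invokes constructibility to upgrade Euclidean openness of the image to Zariski openness, and separately computes $B\cdot S_{\text{reg}}=H_0^{\times}$ from Kostant's isomorphism \eqref{Equation: KostantWhittakerIso}. You instead build an explicit inverse: the torus stage (using $T/Z\cong(\mathbb{C}^{\times})^{\Pi}$ via the simple roots) plus the isomorphism $U\times S_{\text{reg}}\cong\xi+\mathfrak{b}$ (which is exactly \cite[Theorem 1.2]{KostantWhittaker}; you may cite it rather than reprove it) give a $B$-equivariant trivialization $(B/Z)\times S_{\text{reg}}\cong H_0^{\times}$, and applying $G\times_B(-)$ identifies $X(H_0^{\times})$ with $G/Z\times S_{\text{reg}}$ with inverse $\varphi$. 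This is cleaner in that the stabilizer statement $Z_B(\eSreg)=Z$ (the paper's Lemma \ref{Lemma: Technical} and Proposition \ref{Lemma: B-stabilizer}) and the image computation both fall out of a single existence-and-uniqueness claim, and openness of the image is immediate since $H_0^{\times}$ is open and $B$-invariant in $H_0$.

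However, the last assertion of the theorem --- that $\varphi$ is a symplectomorphism onto $X(H_0^{\times})$ --- is not proven: you only write that you ``expect'' the Poisson structure of Theorem \ref{Theorem: Poisson structure} to have been built so that this holds, ``since both forms live over $\mathfrak{g}$.'' That is a genuine gap. Compatibility of two symplectic varieties with maps to $\mathfrak{g}$ does not identify their symplectic forms, and the two forms here come from two different reductions of $G\times\mathfrak{g}\cong T^*G$: $\Omega_{\text{reg}}$ is the quotient form on $(G\times S_{\text{reg}})/Z$, while the leaf form $\Omega_{(\Pi,0)}$ on $X(H_0^{\times})=\rho^{-1}(\mathcal{O}_{(\Pi,0)})/B$ satisfies, by \eqref{Equation: Symplectic form condition}, $\pi_{(\Pi,0)}^*(\Omega_{(\Pi,0)})=j_{(\Pi,0)}^*(\Omega)-\rho_{(\Pi,0)}^*(\omega_{(\Pi,0)})$, where $\omega_{(\Pi,0)}$ is the Kirillov--Kostant--Souriau form on the $B$-coadjoint orbit $\mathcal{O}_{(\Pi,0)}$ (i.e.\ on $\mathcal{O}_{\text{Toda}}$). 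The entire content of the paper's Theorem \ref{Theorem: Symplectomorphism to image} is the verification that this KKS correction term vanishes when pulled back to $G\times S_{\text{reg}}$: since $\rho(g,x)=-\pi_{\mathfrak{b}_{-}}(x)$ and $\mathrm{ker}(\adj_{\eta})\subseteq\mathfrak{u}$ (Lemma \ref{Lemma: Kernel of the nilnegative element}), the differential of $\rho$ annihilates all tangent vectors $(y,z)$ with $z\in\mathrm{ker}(\adj_{\eta})$, so $\rho_{(\Pi,0)}^*(\omega_{(\Pi,0)})$ restricts to zero there and the leaf form agrees with $\iota^*(\Omega)=\pi_Z^*(\Omega_{\text{reg}})$. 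Without this computation (or an equivalent one) the symplectomorphism claim is unsupported; with it, your argument would be complete.
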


This result allows one to regard \eqref{Equation: Definition of other integrable system} as a completely integrable system on the open dense symplectic leaf in $\Fam{H_0}$. At the same time, the following slight variant of \eqref{Equation: Definition of other integrable system} gives functions defined on all of $X(H_0)$: 
\begin{equation}\label{Equation: Definition of last integrable system}\tilde{\tau_i}:=\mu_0^*(f_i^{\zeta}):\Fam{H_0}\rightarrow\mathbb{C},\quad i=1,\ldots,\ell.\end{equation} Motivated by these last two sentences, we have the following straightforward consequence of Theorem \ref{Theorem: Open immersion}.

\begin{corollary}\label{Corollary: Slight variant}
The functions $\tilde{\tau_1},\ldots,\tilde{\tau_{\ell}}$ form a completely integrable system on the Poisson variety $X(H_0)$. This system extends the one defined on $G/Z\times S_{\emph{reg}}$ in the sense that $\varphi^*(\tilde{\tau_i})=\tau_i$ for all $i=1,\ldots,\ell$. 
\end{corollary}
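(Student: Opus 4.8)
The plan is to deduce everything from Theorem~\ref{Theorem: Open immersion} by transporting the known system on $G/Z\times S_{\text{reg}}$ across the symplectomorphism $\varphi$ and then extending by density. First I would dispose of the extension property. Since the diagram in Theorem~\ref{Theorem: Open immersion} commutes, $\mu_0\circ\varphi=\mu$, so for each $i$ one has
$$\varphi^*(\tilde{\tau_i})=\varphi^*\mu_0^*(f_i^{\zeta})=(\mu_0\circ\varphi)^*(f_i^{\zeta})=\mu^*(f_i^{\zeta})=\tau_i,$$
which is exactly the asserted relation $\varphi^*(\tilde{\tau_i})=\tau_i$. Note also that each $\tilde{\tau_i}=\mu_0^*(f_i^{\zeta})$ is a global regular function on $X(H_0)$, being the pullback of a polynomial along the morphism $\mu_0$.

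Next I would establish that the $\tilde{\tau_i}$ are pairwise in involution for the Poisson structure of Theorem~\ref{Theorem: Poisson structure}. By Theorem~\ref{Theorem: Open immersion}, $\varphi$ is a symplectomorphism onto the open dense symplectic leaf $X(H_0^{\times})$, and on any symplectic leaf the restriction of the ambient Poisson bracket coincides with the bracket computed from the leaf's symplectic form. Restricting to $X(H_0^{\times})$ and pulling back by $\varphi$ therefore gives
$$\varphi^*\big(\{\tilde{\tau_i},\tilde{\tau_j}\}\big|_{X(H_0^{\times})}\big)=\{\varphi^*\tilde{\tau_i},\varphi^*\tilde{\tau_j}\}=\{\tau_i,\tau_j\}=0,$$
the last equality because $\tau_1,\ldots,\tau_{\ell}$ form a completely integrable system on $G/Z\times S_{\text{reg}}$. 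As $\varphi$ is an isomorphism onto $X(H_0^{\times})$, this forces $\{\tilde{\tau_i},\tilde{\tau_j}\}$ to vanish identically on $X(H_0^{\times})$. But $\{\tilde{\tau_i},\tilde{\tau_j}\}$ is a regular function on the smooth irreducible variety $X(H_0)=G\times_B H_0$, and $X(H_0^{\times})$ is dense; a regular function vanishing on a dense subset vanishes everywhere, so $\{\tilde{\tau_i},\tilde{\tau_j}\}=0$ on all of $X(H_0)$.

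It remains to check generic functional independence. Since $\tau_1,\ldots,\tau_{\ell}$ form an integrable system, their differentials are linearly independent on some dense open $U\subseteq G/Z\times S_{\text{reg}}$; because $\varphi^*\tilde{\tau_i}=\tau_i$ and $\varphi$ is an isomorphism onto $X(H_0^{\times})$, the differentials $d\tilde{\tau_1},\ldots,d\tilde{\tau_{\ell}}$ are linearly independent on $\varphi(U)$, which is open and dense in $X(H_0^{\times})$ and hence in $X(H_0)$. Together with the dimension count $\dim X(H_0)=\dim(G/B)+\dim H_0=(\ell-r)+(\ell+r)=2\ell$, so that $\ell$ functions is the correct number for the $2\ell$-dimensional generic symplectic leaf, this shows that $\tilde{\tau_1},\ldots,\tilde{\tau_{\ell}}$ constitute a completely integrable system on the Poisson variety $X(H_0)$. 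The only genuinely delicate point is the compatibility invoked in the involution step, namely that the ambient Poisson bracket restricts to the symplectic bracket on the leaf $X(H_0^{\times})$; once this standard fact is in hand, the rest is transport of structure along $\varphi$ followed by a density argument, and I expect no substantial obstacle beyond carefully matching the paper's conventions for a completely integrable system on a Poisson variety.
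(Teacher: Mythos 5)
Your proposal is correct and follows essentially the same route as the paper's proof: the commutative diagram $\mu_0\circ\varphi=\mu$ gives $\varphi^*(\tilde{\tau_i})=\tau_i$, the symplectomorphism $\varphi$ of Theorem \ref{Theorem: Symplectomorphism to image} transports the system on $G/Z\times S_{\text{reg}}$ to the open dense leaf $X(H_0^{\times})$, and openness/density of that leaf yields the system on all of $X(H_0)$. The only difference is one of explicitness --- you spell out the density argument for involutivity (the ambient bracket restricting to the leaf bracket, plus vanishing of a regular function on a dense open subset of the irreducible variety $X(H_0)$) and the dimension count $\dim X(H_0)=2\ell$, steps the paper leaves implicit in its final sentence.
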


Theorems \ref{Theorem: Embedding of integrable systems}--\ref{Theorem: Open immersion}, and Corollary \ref{Corollary: Slight variant} then combine to answer our motivating question affirmatively, as follows: the family $\Fam{H_0}$ is canonically Poisson and carries the completely integrable system \eqref{Equation: Definition of last integrable system}, while $\varphi\circ\kappa:\mathcal{O}_{\text{Toda}}\rightarrow\Fam{H_0}$ embeds the Toda lattice into \eqref{Equation: Definition of last integrable system}.

Our results have some additional consequences for the geometry of Hessenberg varieties. In more detail, consider the open subset
$X(x,H_0^{\times}):=X(x,H_0)\cap X(H_0^{\times})$ of the Hessenberg variety $X(x,H_0)$, $x\in\mathfrak{g}$. We show $X(x,H_0^{\times})$ to have the following properties, some of which may be known to experts.

\begin{corollary}\label{Corollary: Implications for Hessenberg varieties}
For $x\in\mathfrak{g}$, the following statements hold:
\begin{itemize}
\item[(i)] The variety $X(x,H_0^{\times})$ is non-empty if and only if $x$ is a regular element of $\mathfrak{g}$. We have $X(x,H_0^{\times})\cong Z_G(x)/Z$ in this case, where $Z_G(x)$ is the $G$-stabilizer of $x$.
\item[(ii)] The symplectic leaf $X(H_0^{\times})$ contains $X(x,H_0^{\times})$ as an isotropic subvariety.
\end{itemize}
\end{corollary}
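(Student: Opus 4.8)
The plan is to derive both parts from Theorem~\ref{Theorem: Open immersion} together with Kostant's structure theory for the regular Slodowy slice, so that essentially no new geometric input is required. The first step is to identify $X(x,H_0^{\times})$ with a fibre of $\mu$. Since $\varphi$ is an open immersion with image $X(H_0^{\times})$ satisfying $\mu_0\circ\varphi=\mu$, I would intersect with the Hessenberg fibre $\mu_0^{-1}(x)=X(x,H_0)$ to obtain
\[
X(x,H_0^{\times})=X(H_0^{\times})\cap\mu_0^{-1}(x)=\varphi\big(\mu^{-1}(x)\big),
\]
so that $\varphi$ restricts to an isomorphism $\mu^{-1}(x)\xrightarrow{\ \sim\ }X(x,H_0^{\times})$. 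The problem thus reduces to analysing
\[
\mu^{-1}(x)=\{(gZ,\eSreg)\in G/Z\times S_{\text{reg}} : \Adj_g(\eSreg)=x\}.
\]

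For part (i), I would invoke the two defining properties of the regular slice: every element of $S_{\text{reg}}$ is a regular element of $\mathfrak{g}$, and $S_{\text{reg}}$ meets each regular adjoint orbit in exactly one point. Because $\Adj_g$ preserves regularity, non-emptiness of $\mu^{-1}(x)$ forces $x$ to be regular; conversely, if $x$ is regular then the unique slice point $\eSreg_0\in S_{\text{reg}}\cap(G\cdot x)$, together with any $g_0$ satisfying $\Adj_{g_0}(\eSreg_0)=x$, produces a point of $\mu^{-1}(x)$. The uniqueness of the slice representative is the crucial ingredient: it pins the $S_{\text{reg}}$-coordinate of every point of $\mu^{-1}(x)$ to $\eSreg_0$, whence
\[
\mu^{-1}(x)\cong\{gZ\in G/Z : \Adj_g(\eSreg_0)=x\}=g_0\cdot\big(Z_G(\eSreg_0)/Z\big),
\]
using that $Z\subseteq Z_G(\eSreg_0)$ since $Z$ acts trivially under the adjoint representation. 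Conjugation by $g_0$ identifies $Z_G(\eSreg_0)$ with $Z_G(x)$, and this left translate is isomorphic to $Z_G(\eSreg_0)/Z\cong Z_G(x)/Z$, giving the claimed description.

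For part (ii), the cleanest route is to recall that the fibres of $\mu$ are exactly the leaves of the abstract integrable system on $G/Z\times S_{\text{reg}}$ discussed above, i.e.\ the leaves of its isotropic foliation; hence $\mu^{-1}(x)$ is isotropic and, transporting along the symplectomorphism $\varphi$, so is $X(x,H_0^{\times})$ inside the leaf $X(H_0^{\times})$. Alternatively, one can argue directly from the reduction in part (i): that computation places $\mu^{-1}(x)$ inside the single left $G$-orbit $(G/Z)\times\{\eSreg_0\}$, and the standard moment-map orthogonality $\big(T_m\mu^{-1}(x)\big)^{\omega}=T_m(G\cdot m)$ for $m\in\mu^{-1}(x)$ (here $(\cdot)^{\omega}$ denotes the symplectic orthogonal) then forces $T_m\mu^{-1}(x)\subseteq T_m(G\cdot m)=\big(T_m\mu^{-1}(x)\big)^{\omega}$, which is precisely the statement that $\mu^{-1}(x)$ is isotropic.

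I expect the main obstacle to be purely bookkeeping in part (i): carefully justifying that the $S_{\text{reg}}$-coordinate is constant along $\mu^{-1}(x)$, so that the fibre collapses onto a single adjoint (equivalently $G/Z$-) orbit and its centraliser description becomes transparent. Once Theorem~\ref{Theorem: Open immersion} and the regular-slice facts are in hand, both statements should follow formally, with the regularity dichotomy and the isotropy each reducing to a one-line consequence of the orbit picture.
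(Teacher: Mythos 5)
Your proposal is correct, and its overall architecture matches the paper's: both hinge on Theorem \ref{Theorem: Open immersion} and Theorem \ref{Theorem: Symplectomorphism to image} to identify $X(x,H_0^{\times})=\varphi\big(\mu^{-1}(x)\big)$, and then on regular Slodowy-slice theory to analyse the fibre $\mu^{-1}(x)$. The differences lie in how the pieces are discharged. For the non-emptiness criterion, the paper does \emph{not} go through $\varphi$ at all: it uses the $G/B$-description of $X(x,H_0^{\times})$ (Remark \ref{Remark: Hessenberg in G/B}) to reduce to the statement that $x$ is regular if and only if $x$ is $G$-conjugate into $H_0^{\times}$, with the reverse implication supplied by Kostant's result that $H_0^{\times}\subseteq\mathfrak{g}_{\text{reg}}$ (\cite[Lemma 10]{KostantLie}). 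You instead read non-emptiness off the identification with $\mu^{-1}(x)$, using only the slice facts $S_{\text{reg}}\subseteq\mathfrak{g}_{\text{reg}}$ and that every regular orbit meets $S_{\text{reg}}$; this avoids the external citation, but it leans on the full strength of Theorem \ref{Theorem: Open immersion}, whose proof already encodes the same content through the equality $B\cdot S_{\text{reg}}=H_0^{\times}$ established in Proposition \ref{Proposition: Open immersion}. For the centralizer description and the isotropy, the paper simply cites \cite{CrooksRayan} (Propositions 10 and 11 there) for the facts that $\mu^{-1}(x)$ is isotropic in $G/Z\times S_{\text{reg}}$ and isomorphic to $Z_G(x)/Z$, and then transports along the symplectomorphism $\varphi$; you carry out those arguments explicitly (collapse of the $S_{\text{reg}}$-coordinate by uniqueness of slice representatives, translation and conjugation to $Z_G(x)/Z$, isotropy from $\mu^{-1}(x)\subseteq G\cdot m$ combined with moment-map orthogonality), which is exactly the content of the cited propositions. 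One small point of care in your second isotropy argument: the stated equality $\big(T_m\mu^{-1}(x)\big)^{\omega}=T_m(G\cdot m)$ presupposes $T_m\mu^{-1}(x)=\ker(d_m\mu)$, which does hold here because the $G$-action on $G/Z\times S_{\text{reg}}$ has finite stabilizers, making $\mu$ a submersion; alternatively, the two inclusions $T_m\mu^{-1}(x)\subseteq\ker(d_m\mu)=\big(T_m(G\cdot m)\big)^{\omega}$ and $T_m\mu^{-1}(x)\subseteq T_m(G\cdot m)$ already give isotropy, so no gap results.
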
 

\subsection{A possible connection to recent research} 
It is illuminating to consider parts of A. B\u{a}libanu's recent paper \cite{BalibanuUniversal}. The aforementioned paper studies the so-called \textit{universal centralizer} $\mathcal{Z}$ of $G$, constructing a certain fibre-wise compactification $\tilde{\mathcal{Z}}$ thereof. The former variety is also shown to be symplectic, while the latter is shown to be Poisson (in fact, \textit{log-symplectic}) with $\mathcal{Z}$ as its unique open dense symplectic leaf. It is thus natural to compare B\u{a}libanu's results with Theorems \ref{Theorem: Poisson structure} and \ref{Theorem: Open immersion}. It is also interesting to note that $\mathcal{Z}$ is realizable as a closed subvariety of $G/Z\times S_{\text{reg}}$. A deeper investigation of connections between our work and \cite{BalibanuUniversal} would seem to be warranted.     

\subsection{Organization}
This paper is organized into Sections \ref{Section: Introduction}--\ref{Section: Connecting the geometries}, the present section being both Section \ref{Section: Introduction} and the introduction. There is a further partitioning into subsections, each indexed by two things: the section to which it belongs and its order of appearance relative to other subsections. For instance, Subsection \ref{Subsection: The integrable system} is the second subsection appearing in Section \ref{Section: Connecting the geometries}.   

Section \ref{Section: Conventions and preliminaries} is devoted to the conventions and preliminary concepts underlying our work, including our algebro-geometric setting (\ref{Subsection: The algebraic setting}), the relevant parts of Poisson geometry (\ref{Subsection: Symplectic varieties} and \ref{Subsection: Embeddings of integrable systems}), and some recurring Lie theory (\ref{Subsection: Basic Lie theory} and \ref{Subsection: Mishchenko-Fomenko subalgebras}). Section \ref{Section: The Toda module} then introduces and studies $V_{\text{Toda}}$, a finite-dimensional complex $B$-module relevant to the Toda lattice. In particular, Subsection \ref{Subsection: The B-orbit stratification of V} describes the closure order on the set of $B$-orbits in $V_{\text{Toda}}$ (Proposition \ref{Proposition: Closure order}). It is in this context that we introduce $\mathcal{O}_{\text{Toda}}$, the unique maximal element in the closure order. Subsection \ref{Subsection: The Toda lattice} subsequently recalls Kostant's version of the Toda lattice.

Section \ref{Section: Symplectic geometry on G/Z x S} is concerned with Lie-theoretic and symplecto-geometric features of $G/Z\times S_{\text{reg}}$. In Subsection \ref{Subsection: Slodowy slice}, we recall the relevant theory of $\mathfrak{sl}_2$-triples and Slodowy slices. Subsection \ref{Subsection: The holomorphic symplectic structure} then endows $G/Z\times S_{\text{reg}}$ with a symplectic form and Hamiltonian action of $G$, for which the above-discussed map $\mu:G/Z\times S_{\text{reg}}\rightarrow\mathfrak{g}$ is a moment map. This leads to Subsection \ref{Subsection: The integrable system on G/Z x S}, which uses $\mu$ and Mishchenko--Fomenko polynomials to construct the completely integrable system \eqref{Equation: Definition of other integrable system} on $G/Z\times S_{\text{reg}}$ (Theorem \ref{Theorem: Other system}).

In Section \ref{Section: The Toda lattice and G/Z x S}, we focus our attention on Theorem \ref{Theorem: Embedding of integrable systems} and issues associated with its proof. Subsection \ref{Subsection: B-stabilizers} consists of technical lemmas, which among other things show $Z$ to be the $B$-stabilizer of each point in $S_{\text{reg}}$. Subsection \ref{Subsection: Three preliminary morphisms} is also preparatory, as it studies three morphisms relevant to constructing and understanding the embedding in Theorem \ref{Theorem: Embedding of integrable systems}. The theorem itself is proved in \ref{Subsection: The embedding of integrable systems}.

Section \ref{Section: Poisson geometry on X(H_0)} shifts the emphasis to Theorem \ref{Theorem: Poisson structure} and related matters. In \ref{Subsection: Hessenberg varieties in general}, we collect a few essential definitions from the general theory of Hessenberg varieties. Subsection \ref{Subsection: A symplectic structure} then specializes to the above-discussed family $\mu_0:X(H_0)\rightarrow\mathfrak{g}$, proving Theorem \ref{Theorem: Poisson structure} via Theorem \ref{Theorem: Detailed Poisson structure} and Corollary \ref{Corollary: Unique open dense leaf}. We also use the $B$-orbit stratification of $V_{\text{Toda}}$ to compute $X(H_0)$'s symplectic leaves (Theorem \ref{Theorem: Detailed Poisson structure}) and their closure relations (Corollary \ref{Corollary: Closure order}), and we describe the symplectic form on each leaf (Theorem \ref{Theorem: Detailed Poisson structure}).

Section \ref{Section: Connecting the geometries} is devoted to Theorem \ref{Theorem: Open immersion} and its implications. This theorem is proved in \ref{Subsection: An open immersion}, by means of Proposition \ref{Proposition: Open immersion} and Theorem \ref{Theorem: Symplectomorphism to image}. The proofs of Corollaries \ref{Corollary: Slight variant} and \ref{Corollary: Implications for Hessenberg varieties} are given in Subsections \ref{Subsection: The integrable system} and \ref{Subsection: An application to regular Hessenberg varieties}, respectively.  

\subsection*{Acknowledgements}
We wish to recognize Ana B\u{a}libanu, Megumi Harada, Takashi Otofuji, and Steven Rayan for helpful conversations. The first author is grateful to Mikiya Masuda for his support and encouragement. He is supported in part by a JSPS Research Fellowship for Young Scientists (Postdoctoral Fellow): 16J04761 and a JSPS Grant-in-Aid for Early-Career Scientists: 18K13413. The second author gratefully acknowledges support from the Natural Sciences and Engineering Research Council of Canada [516638--2018].
\section{Conventions and preliminaries}\label{Section: Conventions and preliminaries}
This section establishes the fundamental conventions observed in our paper. Also included are brief discussions of a few standard topics in integrable systems and equivariant symplectic geometry. These discussions are by no means intended to serve as review, nor are they sufficient for this purpose. They are instead designed to fulfill two objectives, the first being to preempt awkward digressions in later sections. The second is to clearly state some slight variations on standard ideas, variations that are needed in other sections.  

The objects and notation introduced in Subsections \ref{Subsection: Basic Lie theory} and \ref{Subsection: Mishchenko-Fomenko subalgebras} will be adopted in all sections of our paper that follow \ref{Subsection: Mishchenko-Fomenko subalgebras}.

\subsection{The algebraic setting}\label{Subsection: The algebraic setting}
We will work exclusively over $\mathbb{C}$, implicitly taking it to be the base field underlying any notion that requires a field. This presents us with two natural categories in which to do geometry -- the algebro-geometric category with varieties, algebraic maps, the Zariski topology, etc., and the holomorphic category with complex manifolds, holomorphic maps, the Euclidean topology, etc. We shall always work within the first of these categories unless we clearly indicate otherwise. With only trivial modifications, most of our results can be translated into holomorphic terms.        

\subsection{Symplectic and Poisson varieties}\label{Subsection: Symplectic varieties}
Let $X$ be a smooth algebraic variety. A symplectic form on $X$ is a closed, non-degenerate $2$-form $\omega\in\Omega^2(X)$. We will refer to $X$ as a \textit{symplectic variety} if it comes equipped with a choice of symplectic form $\omega$. Now suppose that $X$ also carries an algebraic action of a connected linear algebraic group $G$ having Lie algebra $\mathfrak{g}$, with ``action'' always meaning ``left action'' in this paper. The action in question is called \textit{Hamiltonian} if $\omega\in\Omega^2(X)^G$ (i.e. the $G$-action preserves $\omega$) and there exists a variety morphism $\mu:X\rightarrow\mathfrak{g}^*$ satisfying the following two properties: 
\begin{itemize}
\item $\mu$ is $G$-equivariant with respect to the given action on $X$ and the coadjoint action on $\mathfrak{g}^*$.
\item $d(\mu^z)=\iota_{\tilde{z}}\omega$ for all $z\in\mathfrak{g}$, where $\mu^z:X\rightarrow\mathbb{C}$ is defined by $x\mapsto(\mu(x))(z)$ and $\tilde{z}$ is the fundamental vector field on $X$ associated to $z$.
\end{itemize}
One then calls $\mu$ a \textit{moment map}.

We will benefit from briefly surveying two topics involving symplectic varieties and Hamiltonian $G$-actions. Accordingly, let $G$ be as above and let $\mathcal{O}\subseteq\mathfrak{g}^*$ be a coadjoint orbit of $G$. It turns out that $\mathcal{O}$ carries a canonical symplectic form $\omega_{\mathcal{O}}$, called the Kirillov--Kostant--Souriau form. To define it, fix a point $\phi\in\mathcal{O}$. Note that $T_{\phi}\mathcal{O}$ consists of all vectors in $\mathfrak{g}^*$ having the form $\adj_x^*(\phi)$, $x\in\mathfrak{g}$, where $\adj^*:\mathfrak{g}\rightarrow\mathfrak{gl}(\mathfrak{g}^*)$ is the coadjoint representation of $\mathfrak{g}$. Evaluating $\omega_{\mathcal{O}}$ at $\phi$ then produces the following bilinear form $(\omega_{\mathcal{O}})_{\phi}$ on $T_{\phi}\mathcal{O}$:
\begin{equation}\label{Equation: KKS form}(\omega_{\mathcal{O}})_{\phi}(\adj^*_{x}(\phi),\adj^*_{y}(\phi))=\phi([x,y]),\quad x,y\in\mathfrak{g}.\end{equation} The $G$-action on $\mathcal{O}$ is Hamiltonian with respect to $\omega_{\mathcal{O}}$, with moment map given by the inclusion $\mathcal{O}\hookrightarrow\mathfrak{g}^*$.      

For a second survey topic, let $X$ be any smooth variety on which $G$ acts algebraically. This $G$-action has a distinguished lift to $T^*X$, called the \textit{cotangent lift} and defined as follows:
$$g\cdot (x,\gamma):=(g\cdot x,\gamma\circ (d_x(\theta_g))^{-1}),\quad g\in G,\text{ }x\in X,\text{ }\gamma\in T^*_xX,$$
where $\theta_g:X\rightarrow X$ is the automorphism given by $\theta_g(y)=g\cdot y$ and $d_x(\theta_g):T_xX\rightarrow T_{g\cdot x}X$ is the differential of $\theta_g$ at $x$. Note also that $T^*X$ carries a canonical symplectic form, with respect to which the cotangent lift action is Hamiltonian. One can define a moment map $\mu:T^*X\rightarrow\mathfrak{g}^*$ by the property
\begin{equation}\label{Equation: Cotangent lift moment map}(\mu(x,\gamma))(z)=\gamma(\tilde{z}(x)),\quad x\in X,\text{ }\gamma\in T^*_xX,\text{ }z\in\mathfrak{g}.\end{equation}  	

Our discussion now turns to Poisson-geometric considerations, for which we let $X$ be a smooth algebraic variety with structure sheaf $\mathcal{O}_X$. One calls $X$ a \textit{Poisson variety} if $\mathcal{O}_X$ has been enriched to a sheaf of Poisson algebras, $(\mathcal{O}_X,\{\cdot,\cdot\})$. Recall that every symplectic variety is canonically a Poisson variety, a fact we will use implicitly.

We will make extensive use of the canonical Poisson variety structure on $\mathfrak{g}^*$, where $G$ and $\mathfrak{g}$ are as above. One crucial fact about this Poisson structure is as follows: if $X$ is a symplectic variety with a Hamiltonian action of $G$ and moment map $\mu:X\rightarrow\mathfrak{g}^*$, then the pullback $\mu^*:\mathbb{C}[\mathfrak{g}^*]\rightarrow\mathcal{O}_X(X)$ is a morphism of Poisson algebras (see \cite[Lemma 1.4.2(ii)]{Chriss}).         

We conclude this subsection by discussing, in very general terms, a scenario that occurs later in our paper. To this end, let $X$ be a symplectic variety endowed with a Hamiltonian action of $G$ and corresponding moment map $\mu:X\rightarrow\mathfrak{g}^*$. Let us temporarily work in the holomorphic category, viewing $X$ as a holomorphic symplectic manifold and $G$ as a complex Lie group. Assume that the $G$-action is both free and proper, so that the set-theoretic quotient $X/G$ is naturally a complex manifold with a holomorphic Poisson structure (see \cite[Theorem 10.1.1]{Ortega} for the result we are implicitly invoking, and \cite[Definition 1.15]{Laurent-Gengoux} for the definition of a holomorphic Poisson structure). At the same time, we have a set-theoretic disjoint union
$$X/G=\bigsqcup_{\mathcal{O}}\big(\mu^{-1}(\mathcal{O})/G\big)$$
taken over all coadjoint orbits $\mathcal{O}\subseteq\mathfrak{g}^*$. Each set $\mu^{-1}(\mathcal{O})/G$ is an immersed complex submanifold of $X/G$, and it carries a holomorphic symplectic form $\Omega_{\mathcal{O}}$ determined by the following condition (see \cite[Theorem 6.3.1]{Ortega}):
\begin{equation}\label{Equation: Symplectic form condition} \pi_{\mathcal{O}}^*(\Omega_{\mathcal{O}})=j_{\mathcal{O}}^*(\omega_X)-\mu_{\mathcal{O}}^*(\omega_{\mathcal{O}}),\end{equation}
where $\pi_{\mathcal{O}}:\mu^{-1}(\mathcal{O})\rightarrow\mathcal\mu^{-1}(\mathcal{O})/G$ is the quotient map, $j_{\mathcal{O}}:\mu^{-1}(\mathcal{O})\rightarrow X$ is the inclusion, $\mu_{\mathcal{O}}:\mu^{-1}(\mathcal{O})\rightarrow \mathcal{O}$ is the restriction of $\mu$ to $\mu^{-1}(\mathcal{O})$, $\omega_X$ is the given symplectic form on $X$, and $\omega_{\mathcal{O}}$ is the Kirillov--Kostant--Souriau form on $\mathcal{O}$. If the fibres of $\mu$ are connected, then the symplectic manifolds $\mu^{-1}(\mathcal{O})/G$ are precisely the symplectic leaves of the Poisson structure on $X/G$ (see \cite[Theorem 10.1.1]{Ortega}).  

\subsection{Embeddings of integrable systems}\label{Subsection: Embeddings of integrable systems}
Recall the following standard definition, stated here in algebro-geometric terms.
\begin{definition}
Let $X$ be an irreducible, $2n$-dimensional symplectic variety with associated Poisson bracket $\{\cdot,\cdot\}$ on $\mathcal{O}_X$. A \textit{completely integrable system} on $X$ consists of $n$ global functions, $f_1,\ldots,f_n\in\mathcal{O}_X(X)$, satisfying the following conditions:
\begin{itemize}
\item[(i)] The $f_i$ Poisson-commute in pairs, i.e. $\{f_i,f_j\}=0$ for all $i,j\in\{1,\ldots,n\}$.
\item[(ii)] The $f_i$ are functionally independent, i.e. $df_1\wedge\cdots\wedge df_n$ is non-zero at all points in some open dense subset of $X$.
\end{itemize}  
\end{definition}

\begin{remark}\label{Remark: More general definition}
This definition remains correct if we let $X$ be any irreducible, $2n$-dimensional Poisson variety with an open dense symplectic leaf, an observation we exploit in Subsection \ref{Subsection: The integrable system}. For the definition of a completely integrable system on a more general Poisson variety, we refer the reader to \cite[Definition 4.13]{AdlerAlgebraic} or \cite[Definition 12.9]{Laurent-Gengoux}.  
\end{remark}

\begin{definition}\label{Definition: Embedding of completely integrable systems}
Let $X$ and $Y$ be irreducible symplectic varieties of respective dimensions $2m$ and $2n$, with respective symplectic forms $\omega_X$ and $\omega_Y$. Suppose that we have completely integrable systems $f_1,\ldots,f_m$ on $X$ and $F_1,\ldots,F_n$ on $Y$. A morphism $\phi:X\rightarrow Y$ shall be called an \textit{embedding of completely integrable systems} if 
\begin{itemize}
\item[(i)] $\phi$ is a locally closed immersion of algebraic varieties,
\item[(ii)] $\phi^*(\omega_Y)=\omega_X$, and
\item[(iii)] for all $i\in\{1,\ldots,m\}$, $f_i$ is a $\mathbb{C}$-linear combination of $\phi^*(F_1),\ldots,\phi^*(F_n)$.
\end{itemize} 
\end{definition}

\begin{remark}\label{Remark: Context for definition}
It might seem natural to replace (iii) with the requirement that $\phi^*(F_i)=f_i$ for all $i\in\{1,\ldots,m\}$, thereby obtaining a more restrictive definition. To explore this point, suppose that Definition \ref{Definition: Embedding of completely integrable systems} has been satisfied. By replacing each $F_i$ with a suitable $\mathbb{C}$-linear combination of $F_1,\ldots,F_n$, one can ensure that $\phi^*(F_i)=f_i$ for all $i\in\{1,\ldots,m\}$. We therefore do not sacrifice anything substantial by using Definition \ref{Definition: Embedding of completely integrable systems} instead of the more rigid alternative. At the same time, Definition \ref{Definition: Embedding of completely integrable systems} allows us to state Theorem \ref{Theorem: Embedding of integrable systems} in more convenient terms.    
\end{remark}
	
\subsection{Basic Lie theory}\label{Subsection: Basic Lie theory}
Let $G$ be a connected, simply-connected semisimple linear algebraic group having rank $r$, Lie algebra $\mathfrak{g}$, and exponential map $\exp:\mathfrak{g}\rightarrow G$. Fix two Borel subgroups $B,B_{-}\subseteq G$, assumed to be opposite in the sense that $T:=B\cap B_{-}$ is a maximal torus of $G$. Note that each Borel subgroup has dimension 
\begin{align}\label{eq: definition of ell}
\ell:=\frac{1}{2}(\dim(G)+r)=\dim B,
\end{align}
a quantity that will be ubiquitous in our work. 

Let $U$ and $U_{-}$ denote the unipotent radicals of $B$ and $B_{-}$, respectively, so that one has the internal semidirect product decompositions $$B=U\rtimes T\quad\text{and}\quad B_{-}=U_{-}\rtimes T.$$
Now let $\mathfrak{b},\mathfrak{b}_{-},\mathfrak{u},\mathfrak{u}_{-},\mathfrak{t}$ be the Lie algebras of $B,B_{-},U,U_{-},T$, respectively. One then has the decompositions
$$\mathfrak{g}=\mathfrak{u}_{-}\oplus\mathfrak{t}\oplus\mathfrak{u},\quad\mathfrak{b}=\mathfrak{t}\oplus\mathfrak{u},\quad\text{and}\quad\mathfrak{b}_{-}=\mathfrak{t}\oplus\mathfrak{u}_{-}.$$ 

Let $\Adj:G\rightarrow\GL(\mathfrak{g})$ and $\adj:\mathfrak{g}\rightarrow\mathfrak{gl}(\mathfrak{g})$ denote the adjoint representations of $G$ and $\mathfrak{g}$, respectively, noting that the latter is given by
$\adj_x(y)=[x,y]$ for all $x,y\in\mathfrak{g}$. Recall that $\dim(\mathrm{ker}(\adj_x))\geq r$ for all $x\in\mathfrak{g}$, and that $x$ is called \textit{regular} when equality holds. We shall set
$$\mathfrak{g}_{\text{reg}}:=\{x\in\mathfrak{g}:x\text{ is regular}\},$$ which one knows to be a $G$-invariant, open dense subvariety of $\mathfrak{g}$ (e.g. \cite[Chapter 1]{Humphreys}).
  
Let $\Delta$ denote the set of all roots of $\mathfrak{g}$, which canonically sits inside both $\mathfrak{t}^*$ and the weight lattice of algebraic group morphisms $T\rightarrow\mathbb{C}^{\times}$. Given $\alpha\in\Delta$, let $$\mathfrak{g}_{\alpha}:=\{x\in\mathfrak{g}:\Adj_t(x)=\alpha(t)x\text{ for all }t\in T\}=\{x\in\mathfrak{g}:[t,x]=\alpha(t)x\text{ for all }t\in \mathfrak{t}\}$$ denote the corresponding root space. Recall that 
$$\mathfrak{u}=\bigoplus_{\alpha\in\Delta_{+}}\mathfrak{g}_{\alpha}\quad\text{ and }\quad\mathfrak{u}_{-}=\bigoplus_{\alpha\in\Delta_{-}}\mathfrak{g}_{\alpha},$$where $\Delta_{+}\subseteq\Delta$ and $\Delta_{-}\subseteq\Delta$ are the sets of positive and negative roots, respectively.

Let $\langle\cdot,\cdot\rangle:\mathfrak{g}\otimes_{\mathbb{C}}\mathfrak{g}\rightarrow\mathbb{C}$ denote the Killing form, which is both non-degenerate and $\Adj$-invariant. It thereby induces an isomorphism
\begin{equation}\label{Equation: Killing isomorphism}
\mathfrak{g}\xrightarrow{\cong}\mathfrak{g}^*,\quad x\mapsto\langle x,\cdot\rangle
\end{equation}  
between the adjoint and coadjoint representations of $G$. Among other things, we may use \eqref{Equation: Killing isomorphism} to transfer relevant structures from $\mathfrak{g}^*$ to $\mathfrak{g}$. One such structure is the Poisson structure on $\mathfrak{g}^*$, so that $\mathfrak{g}$ is canonically a Poisson variety. Another structure is a moment map for a Hamiltonian $G$-action, which we will often take to be $\mathfrak{g}$-valued.  

Let $\Pi\subseteq\Delta_{+}$ be the set of simple roots, which is known to form a basis of $\mathfrak{t}^*$. For each $\alpha\in\Pi$, let us fix choices of $e_{\alpha}\in\mathfrak{g}_{\alpha}$ and $e_{-\alpha}\in\mathfrak{g}_{-\alpha}$ satisfying $\langle e_{\alpha},e_{-\alpha}\rangle=1$. Let us then set $h_{\alpha}:=[e_{\alpha},e_{-\alpha}]\in\mathfrak{t}$ for all $\alpha\in\Pi$. While the $h_{\alpha}$ need not coincide with the simple coroots, they necessarily form a basis of $\mathfrak{t}$. 

Recall that each root $\beta\in\Delta$ can be written as
$\beta=\sum_{\alpha\in\Pi}n_{\alpha}^{\beta}\alpha$
for uniquely determined integers $n_{\alpha}^{\beta}\in\mathbb{Z}$, and that the \textit{height} of $\beta$ is defined to be
$\mathrm{ht}(\beta):=\sum_{\alpha\in\Pi}n_{\alpha}^{\beta}\in\mathbb{Z}$. Consider the sum of all root spaces for roots of a given height $n\in\mathbb{Z}\setminus\{0\}$, i.e.
$$\mathfrak{g}_{(n)}:= 
\bigoplus_{\substack{\beta\in\Delta\\ \mathrm{ht}(\beta)=n}}\mathfrak{g}_{\beta}.$$ Let us declare $\mathfrak{g}_{(0)}:=\mathfrak{t}$, so that we have
$\mathfrak{g}=\bigoplus_{n\in\mathbb{Z}}\mathfrak{g}_{(n)}$ as vector spaces. Given $x\in\mathfrak{g}$, this decomposition allows us to define elements $x_{(n)}\in\mathfrak{g}_{(n)}$, $n\in\mathbb{Z}$, by the property that $x=\sum_{n\in\mathbb{Z}}x_{(n)}$.

\subsection{Invariant polynomials and Mishchenko--Fomenko theory}\label{Subsection: Mishchenko-Fomenko subalgebras}
We now review \\ Mishchenko and Fomenko's approach to constructing large involutive sets in the Poisson algebra $\mathbb{C}[\mathfrak{g}]$. To this end, consider the subalgebra $$\mathbb{C}[\mathfrak{g}]^G:=\{f\in\mathbb{C}[\mathfrak{g}]:f\circ\Adj_g=f\text{ for all } g\in G\}$$ of $\Adj$-invariant polynomials in $\mathbb{C}[\mathfrak{g}]$. It is known that $\mathbb{C}[\mathfrak{g}]^G$ is a polynomial algebra on $r$-many homogeneous, algebraically independent generators. Let $f_1,\ldots,f_r\in\mathbb{C}[\mathfrak{g}]^G$ be a choice of such generators, fixed for the duration of this paper and having respective degrees $d_1,\ldots,d_r$.  Each $a\in\mathfrak{g}$ then determines a collection of polynomials $f^a_{ij}\in\mathbb{C}[\mathfrak{g}]$, $i\in\{1,\ldots,r\}$, $j\in\{0,\ldots,d_i-1\}$, defined %implicitly 
by the following expansions: 
\begin{equation}\label{Equation: Taylor expansion} f_i(x+\lambda a)=f_i(a)\lambda^{d_i}+\sum_{j=0}^{d_i-1}f^a_{ij}(x)\lambda^j,\quad x\in\mathfrak{g},\text{ }\lambda\in\mathbb{C}\end{equation} for each $i\in\{1,\ldots,r\}$.

A few observations are in order, the first being that $f^a_{i0}=f_i$ for all $i\in\{1,\ldots,r\}$. Our second observation is that the $f^a_{ij}$ constitute a list of $\sum_{i=1}^rd_i$ polynomials, while it is known that $\sum_{i=1}^rd_i=\ell=\dim(B)$ (see \cite[Equation (1)]{Varadarajan}). With these last two sentences in mind, we may list the $f^a_{ij}$ as $f^a_1,\ldots,f^a_{\ell}$ with $f^a_i=f_i$ for all $i\in\{1,\ldots,r\}$.

\begin{theorem}\label{Theorem: Main MF}
The polynomials $f^a_1,\ldots,f^a_{\ell}$ Poisson-commute in pairs for all $a\in\mathfrak{g}$, and they are algebraically independent in $\mathbb{C}[\mathfrak{g}]$ whenever $a\in\mathfrak{g}_{\emph{reg}}$.
\end{theorem}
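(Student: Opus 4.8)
My plan is to treat the two assertions separately: commutativity (valid for all $a$) via a frozen-argument computation, and algebraic independence (valid for regular $a$) via a Jacobian/dimension count.

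For commutativity, I would transport the Poisson structure from $\mathfrak{g}^*$ to $\mathfrak{g}$ along \eqref{Equation: Killing isomorphism} and write $\nabla f(x)\in\mathfrak{g}$ for the Killing-dual of the differential $d_xf$, so that the Lie--Poisson bracket reads $\{f,g\}(x)=\langle x,[\nabla f(x),\nabla g(x)]\rangle$. The invariance $f_i\in\mathbb{C}[\mathfrak{g}]^G$ is equivalent to $[\nabla f_i(y),y]=0$ for all $y$, i.e. the basic invariants are Casimirs; in particular $\langle y,[\nabla f_i(y),w]\rangle=0$ for all $w\in\mathfrak{g}$. Fixing $a$ and setting $F_i^\lambda(x):=f_i(x+\lambda a)$, translation has identity differential, so $\nabla F_i^\lambda(x)=\nabla f_i(x+\lambda a)$. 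Writing $P:=[\nabla f_i(x+\lambda a),\nabla f_{i'}(x+\mu a)]$, I would apply the Casimir identity at the points $y=x+\lambda a$ and $y=x+\mu a$ to obtain both $\langle x,P\rangle=-\lambda\langle a,P\rangle$ and $\langle x,P\rangle=-\mu\langle a,P\rangle$; subtracting forces $\{F_i^\lambda,F_{i'}^\mu\}(x)=\langle x,P\rangle=0$ whenever $\lambda\ne\mu$. Since this bracket is polynomial in $(\lambda,\mu)$ and vanishes off the diagonal, it vanishes identically, and extracting the coefficient of $\lambda^j\mu^k$ (the top coefficients $f_i(a)$ being constants, hence central) yields $\{f^a_{ij},f^a_{i'k}\}=0$, which is the commutativity of $f^a_1,\ldots,f^a_\ell$.

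For independence, I would use the Jacobian criterion: $f^a_1,\ldots,f^a_\ell$ are algebraically independent iff their differentials are linearly independent at a generic $x$, and since there are exactly $\ell=\sum_i d_i$ of them this amounts to the span $\mathrm{span}_{\mathbb{C}}\{\nabla f^a_{ij}(x)\}_{i,j}$ being $\ell$-dimensional. Differentiating \eqref{Equation: Taylor expansion} in $x$ gives the generating identity $\nabla f_i(x+\lambda a)=\sum_{j=0}^{d_i-1}\nabla f^a_{ij}(x)\lambda^j$, and a Vandermonde argument identifies, for fixed $x$, the span of the coefficients with the span of the values:
$$\mathrm{span}_{\mathbb{C}}\{\nabla f^a_{ij}(x)\}_{i,j}=\sum_{\lambda\in\mathbb{C}}\mathrm{span}_{\mathbb{C}}\{\nabla f_1(x+\lambda a),\ldots,\nabla f_r(x+\lambda a)\}.$$
By Kostant's theorem, whenever $x+\lambda a\in\mathfrak{g}_{\text{reg}}$ the vectors $\nabla f_1(x+\lambda a),\ldots,\nabla f_r(x+\lambda a)$ form a basis of the $r$-dimensional centralizer $\ker(\adj_{x+\lambda a})$, so independence of the $f^a_i$ is equivalent to the dimension count
$$\dim_{\mathbb{C}}\sum_{\lambda}\ker(\adj_{x+\lambda a})=\ell=\tfrac{1}{2}(\dim\mathfrak{g}+r)$$
holding for generic $x$.

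The entire difficulty now resides in this dimension count, and this is the step I expect to be the main obstacle. Regularity of $a$ enters decisively: for $x$ on the line $\mathbb{C}a$ all the centralizers $\ker(\adj_{\lambda a})$ coincide with $\ker(\adj_a)$ and the sum collapses to dimension $r$, so the conclusion genuinely requires generic $x$. I would approach it by analyzing the pencil of skew-symmetric forms $\langle x+\lambda a,[\,\cdot\,,\,\cdot\,]\rangle$ on $\mathfrak{g}$: the sum $\sum_\lambda\ker(\adj_{x+\lambda a})$ is the span of the kernels of this pencil, whose dimension is controlled by its Jordan--Kronecker normal form, and regularity of $a$ guarantees, for generic $x$, that the pencil is of pure Kronecker type so that the kernel-sum attains its maximal value $\ell$. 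Homogeneity of the $f_i$ gives a useful foothold here, since the top $\lambda$-coefficients $\nabla f_i(a)$ already recover a basis of $\ker(\adj_a)$, and the remaining $\ell-r$ dimensions must be contributed by the lower-order coefficients; verifying this maximality for every regular $a$ is the delicate point on which the independence assertion ultimately rests.
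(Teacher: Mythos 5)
First, note what you are up against: the paper does not prove this theorem at all. It states it as known, and the remark immediately following it attributes the commutativity (and independence for regular semisimple $a$) to \cite{MishchenkoFomenko}, and the independence for arbitrary regular $a$ to \cite[Theorem 1.3]{Bolsinov} or \cite[Section 3]{Panyushev}. Your commutativity half is a complete and correct proof of the first assertion: the frozen-argument computation (invariants are Casimirs, evaluate the invariance identity at the two shifted points $x+\lambda a$ and $x+\mu a$, subtract to kill $\langle a,P\rangle$ off the diagonal, then use Zariski density of $\{\lambda\neq\mu\}$ and extract coefficients, the top coefficients $f_i(a)$ being constants) is exactly the classical Mishchenko--Fomenko argument, so on this half you have actually supplied what the paper only cites.

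The independence half, however, contains a genuine gap, and it sits precisely where you flag it. The reduction is sound: the Jacobian criterion, the identity $\nabla f_i(x+\lambda a)=\sum_{j}\nabla f^a_{ij}(x)\lambda^j$, the Vandermonde identification of coefficient spans with value spans, and Kostant's theorem that $\nabla f_1(y),\ldots,\nabla f_r(y)$ is a basis of $\mathrm{ker}(\adj_y)$ for regular $y$ are all correct (with the harmless caveat that the kernel-sum should be taken over those $\lambda$ with $x+\lambda a\in\mathfrak{g}_{\text{reg}}$; for the implication you need, the inclusion of spans in one direction suffices). But the resulting statement --- that for every regular $a$ and generic $x$ one has $\dim\sum_{\lambda}\mathrm{ker}(\adj_{x+\lambda a})=\ell$ --- is not a residual ``delicate point''; it \emph{is} the theorem, namely the completeness criterion of \cite{Bolsinov} and \cite{Panyushev} that the paper cites. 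Your Jordan--Kronecker sketch assumes the conclusion rather than proving it: since the generic form in the pencil $\langle x+\lambda a,[\cdot,\cdot]\rangle$ has corank $r$, ``the pencil is of pure Kronecker type'' is, by Jordan--Kronecker theory, \emph{equivalent} to the dimension count $\dim\sum_\lambda\mathrm{ker}(\adj_{x+\lambda a})=\tfrac{1}{2}(\dim\mathfrak{g}+r)$, so asserting the former for generic $x$ is a restatement of what must be shown. A genuine completion requires two inputs absent from your proposal: (i) the linear-algebra fact that a pencil of skew forms is Jordan-block-free if and only if the rank is constant over the whole projectivized pencil, including the point at infinity --- and regularity of $a$ is exactly what controls that point, since the form at infinity is $\langle a,[\cdot,\cdot]\rangle$; (ii) the fact that $\mathfrak{g}\setminus\mathfrak{g}_{\text{reg}}$ has codimension at least $2$ (in fact $3$) in $\mathfrak{g}$, so that for generic $x$ the entire affine line $x+\mathbb{C}a$ consists of regular elements, which handles all finite $\lambda$. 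As written, your proposal proves the commutativity assertion and correctly reduces, but does not prove, the independence assertion.
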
 

\begin{remark}
Mishchenko and Fomenko's work implies algebraic independence in the case of a regular semisimple element $a$ (see \cite[Theorem 4.2]{MishchenkoFomenko}), while the more general algebraic independence result can be deduced from \cite[Theorem 1.3]{Bolsinov} or \cite[Section 3]{Panyushev}.     
\end{remark}

\begin{remark}\label{rem: decomposition}
When $a\in\mathfrak{g}_{\text{reg}}$ is a regular nilpotent element, we have $f_i(a)=0$ for all $i\in\{1,\ldots,r\}$ (see \cite[Proposition 3.2.5]{Chriss}). Note that \eqref{Equation: Taylor expansion} with $\lambda=1$ then becomes
\begin{align*}
f_i(x+a) = \sum_{j=0}^{d_i-1} f^{a}_{ij}(x), \quad i\in\{1,\ldots,r\},
\end{align*}
an observation we exploit in the proof of Theorem \ref{Theorem: Embedding of completely integrable systems}.
\end{remark}

\section{The module $V_{\textnormal{Toda}}$}\label{Section: The Toda module}

\subsection{Definition of the module}\label{Subsection: Certain B-coadjoint orbits} 

The Killing form on $\mathfrak{g}$ restricts to a non-degenerate pairing $\mathfrak{b}_{-}\otimes_{\mathbb{C}}\mathfrak{b}\rightarrow\mathbb{C}$, thereby inducing a linear isomorphism
\begin{equation}\label{Equation: Isomorphism of opposite Borel subalgebras}\mathfrak{b}_{-}\rightarrow\mathfrak{b}^*,\quad x\mapsto \langle x,\cdot\rangle\vert_{\mathfrak{b}}\in\mathfrak{b}^*,\quad x\in\mathfrak{b}_{-}.\end{equation}
The coadjoint representation of $B$ on $\mathfrak{b}^*$ thereby corresponds to a representation of $B$ on $\mathfrak{b}_{-}$. We shall let $b\ast x$ denote the resulting action of $b\in B$ on $x\in\mathfrak{b}_{-}$. To describe $b\ast x$ in concrete terms, consider the decomposition $\mathfrak{g}=\mathfrak{b}_{-}\oplus\mathfrak{u}$ and the resulting projections $\pi_{\mathfrak{b}_{-}}:\mathfrak{g}\rightarrow\mathfrak{b}_{-}$ and $\pi_{\mathfrak{u}}:\mathfrak{g}\rightarrow\mathfrak{u}$ onto direct summands.

\begin{lemma}
The action of $B$ on $\mathfrak{b}_{-}$ is given by
\begin{equation}\label{Equation: Borel action}
b\ast x = \pi_{\mathfrak{b}_{-}}(\Adj_b(x)),\quad b\in B,\text{ }x\in\mathfrak{b}_{-}.
\end{equation}
\end{lemma}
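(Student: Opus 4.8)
The claim is that the $B$-action $\ast$ on $\mathfrak{b}_{-}$, obtained by transporting the coadjoint action on $\mathfrak{b}^*$ through the Killing-form isomorphism \eqref{Equation: Isomorphism of opposite Borel subalgebras}, is computed by $b\ast x = \pi_{\mathfrak{b}_{-}}(\Adj_b(x))$. The plan is to unwind the definition of the coadjoint action and the pairing, and then identify the resulting functional on $\mathfrak{b}$ with the element $\pi_{\mathfrak{b}_{-}}(\Adj_b(x))$ of $\mathfrak{b}_{-}$ under the same isomorphism. Concretely, write $\Phi:\mathfrak{b}_{-}\to\mathfrak{b}^*$, $\Phi(x)=\langle x,\cdot\rangle\vert_{\mathfrak{b}}$, for the isomorphism. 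By definition, $b\ast x$ is the unique element of $\mathfrak{b}_{-}$ with $\Phi(b\ast x)=\mathrm{Ad}^*_b(\Phi(x))$, where $\mathrm{Ad}^*$ denotes the coadjoint action of $B$ on $\mathfrak{b}^*$. So the whole statement reduces to verifying the identity $\Phi\big(\pi_{\mathfrak{b}_{-}}(\Adj_b(x))\big)=\mathrm{Ad}^*_b(\Phi(x))$ in $\mathfrak{b}^*$, and then invoking injectivity of $\Phi$.

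First I would test this identity against an arbitrary $w\in\mathfrak{b}$. The coadjoint action is $\big(\mathrm{Ad}^*_b(\Phi(x))\big)(w)=\Phi(x)(\Adj_{b^{-1}}(w))=\langle x,\Adj_{b^{-1}}(w)\rangle$; since $b$ stabilizes $\mathfrak{b}$, the argument $\Adj_{b^{-1}}(w)$ genuinely lies in $\mathfrak{b}$, so the restriction is applied legitimately. Next I would use $\Adj$-invariance of the Killing form to move $b$ across the pairing: $\langle x,\Adj_{b^{-1}}(w)\rangle=\langle \Adj_b(x),w\rangle$. The left-hand side of the target identity is $\Phi\big(\pi_{\mathfrak{b}_{-}}(\Adj_b(x))\big)(w)=\langle \pi_{\mathfrak{b}_{-}}(\Adj_b(x)),w\rangle$. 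Thus the two sides agree for all $w\in\mathfrak{b}$ provided $\langle \Adj_b(x),w\rangle=\langle \pi_{\mathfrak{b}_{-}}(\Adj_b(x)),w\rangle$.

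The one substantive point — which I expect to be the only real step — is this last equality, which says that projecting $\Adj_b(x)$ onto $\mathfrak{b}_{-}$ along the decomposition $\mathfrak{g}=\mathfrak{b}_{-}\oplus\mathfrak{u}$ does not change its Killing pairing against elements of $\mathfrak{b}$. This holds because the discarded component lies in $\mathfrak{u}$, and $\mathfrak{u}$ is orthogonal to $\mathfrak{b}$ under the Killing form: indeed $\langle\mathfrak{g}_\alpha,\mathfrak{g}_\beta\rangle=0$ unless $\alpha+\beta=0$, so $\mathfrak{u}=\bigoplus_{\alpha\in\Delta_+}\mathfrak{g}_\alpha$ pairs trivially with $\mathfrak{t}\oplus\mathfrak{u}=\mathfrak{b}$ (a positive root plus a non-negative-height element can never sum to zero). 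Hence $\langle \pi_{\mathfrak{u}}(\Adj_b(x)),w\rangle=0$ for every $w\in\mathfrak{b}$, giving the desired equality.

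Combining these, both functionals in $\mathfrak{b}^*$ agree on all of $\mathfrak{b}$, so $\Phi\big(\pi_{\mathfrak{b}_{-}}(\Adj_b(x))\big)=\mathrm{Ad}^*_b(\Phi(x))=\Phi(b\ast x)$, and injectivity of the isomorphism $\Phi$ forces $b\ast x=\pi_{\mathfrak{b}_{-}}(\Adj_b(x))$, as claimed. No genuine obstacle arises here; the argument is a direct computation, and its only non-formal ingredient is the orthogonality $\mathfrak{u}\perp\mathfrak{b}$ relative to the Killing form, which underwrites the compatibility of the projection $\pi_{\mathfrak{b}_{-}}$ with the pairing \eqref{Equation: Isomorphism of opposite Borel subalgebras}.
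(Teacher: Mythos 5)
Your proposal is correct and follows essentially the same route as the paper's proof: reduce to the $B$-equivariance of the Killing-form isomorphism $\mathfrak{b}_{-}\to\mathfrak{b}^*$, evaluate both functionals on an arbitrary element of $\mathfrak{b}$, use $\Adj$-invariance of the Killing form, and conclude via the fact that $\mathfrak{u}$ annihilates $\mathfrak{b}$ under the pairing. The only difference is cosmetic: you spell out the root-space orthogonality behind $\mathfrak{u}\perp\mathfrak{b}$, which the paper simply asserts.
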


\begin{proof}
Our task is to prove that \eqref{Equation: Isomorphism of opposite Borel subalgebras} is $B$-equivariant with respect to \eqref{Equation: Borel action} and the coadjoint action, which amounts to deriving the following equation in $\mathfrak{b}^*$: 
\begin{equation*}%\label{Equation: First verification} 
\langle\pi_{\mathfrak{b}_{-}}(\Adj_b(x)),\cdot\rangle=\Adj_b^*\left(\langle x,\cdot\rangle\right)\end{equation*} for all $b\in B$ and $x\in\mathfrak{b}_{-}$, where $\Adj^*:B\rightarrow\GL(\mathfrak{b}^*)$ denotes the coadjoint representation of $B$. Upon evaluation of both sides at $y\in\mathfrak{b}$, this becomes the statement
\begin{equation}\label{Equation: Second verification}\langle\pi_{\mathfrak{b}_{-}}(\Adj_b(x)),y\rangle=\langle x,\Adj_{b^{-1}}(y)\rangle\end{equation}
for all $b\in B$, $x\in\mathfrak{b}_{-}$, and $y\in\mathfrak{b}$. Accordingly, note that
\begin{align*}
\langle x,\Adj_{b^{-1}}(y)\rangle =\langle\Adj_b(x),y\rangle
%& = \langle \pi_{\mathfrak{b}_{-}}(\Adj_b(x))+\pi_{\mathfrak{u}}(\Adj_b(x)),y\rangle%\\
= \langle  \pi_{\mathfrak{b}_{-}}(\Adj_b(x)),y\rangle + \langle  \pi_{\mathfrak{u}}(\Adj_b(x)),y\rangle.
\end{align*}
Since $\mathfrak{u}$ is the annihilator of $\mathfrak{b}$ with respect to the Killing form, $\langle  \pi_{\mathfrak{u}}(\Adj_b(x)),y\rangle=0$ and we have
$$\langle x,\Adj_{b^{-1}}(y)\rangle=\langle  \pi_{\mathfrak{b}_{-}}(\Adj_b(x)),y\rangle.$$ This verifies \eqref{Equation: Second verification}, completing the proof.
\end{proof}

Now consider the linear subspace \begin{equation}\label{Equation: Invariant subspace} 
V_{\text{Toda}}:=\mathfrak{t}\oplus\bigoplus_{\alpha\in\Pi}\mathfrak{g}_{-\alpha}\subseteq\mathfrak{b}_{-}.
%V:=\left(\bigoplus_{\alpha\in\Pi}\mathfrak{g}_{-\alpha}\right)\oplus\mathfrak{t}\subseteq\mathfrak{b}_{-}.
\end{equation} 

\begin{lemma}\label{Lemma: Invariance}
The subspace $V_{\emph{Toda}}$ is invariant under the action of $B$ on $\mathfrak{b}_{-}$.
\end{lemma}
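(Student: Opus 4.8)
The plan is to exploit the connectedness of $B$ and reduce the claim to an infinitesimal statement. Since $B$ is connected, the subspace $V_{\text{Toda}}$ is invariant under $\ast$ if and only if it is stable under the differential of this action, i.e. under the induced Lie algebra action of $\mathfrak{b}$ on $\mathfrak{b}_{-}$. Differentiating \eqref{Equation: Borel action} at the identity and using the linearity of $\pi_{\mathfrak{b}_{-}}$, one finds that $\xi\in\mathfrak{b}$ acts on $x\in\mathfrak{b}_{-}$ by $\xi\bullet x=\pi_{\mathfrak{b}_{-}}([\xi,x])$. It therefore suffices to verify that $\pi_{\mathfrak{b}_{-}}([\xi,x])\in V_{\text{Toda}}$ for all $\xi\in\mathfrak{b}$ and $x\in V_{\text{Toda}}$.

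By bilinearity, I would check this on the natural spanning sets, writing $\xi$ as an element of either $\mathfrak{t}$ or a positive root space $\mathfrak{g}_{\beta}$ ($\beta\in\Delta_{+}$), and $x$ as an element of either $\mathfrak{t}$ or a negative simple root space $\mathfrak{g}_{-\alpha}$ ($\alpha\in\Pi$). The cases in which $\xi\in\mathfrak{t}$ or $x\in\mathfrak{t}$ are immediate: the bracket either vanishes, lands in a root space already contained in $\mathfrak{b}_{-}$ and fixed by the projection, or lies in $\mathfrak{u}=\ker(\pi_{\mathfrak{b}_{-}})$. The only substantive case is $\xi\in\mathfrak{g}_{\beta}$ and $x\in\mathfrak{g}_{-\alpha}$, where $[\xi,x]\in\mathfrak{g}_{\beta-\alpha}$ (with the convention $\mathfrak{g}_{0}=\mathfrak{t}$ and $\mathfrak{g}_{\gamma}=0$ when $\gamma$ is neither a root nor zero). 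Here I would split according to whether $\beta-\alpha$ is zero, a positive root, a negative root, or not a root at all: in the first case the bracket lies in $\mathfrak{t}\subseteq V_{\text{Toda}}$, in the second case it lies in $\mathfrak{u}$ and is killed by $\pi_{\mathfrak{b}_{-}}$, and in the last case it simply vanishes.

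The crux — and the step I expect to require the most care — is ruling out that $\beta-\alpha$ is a negative root. Were $\beta-\alpha=-\gamma$ for some $\gamma\in\Delta_{+}$, then $\alpha=\beta+\gamma$ would exhibit the simple root $\alpha$ as a sum of two positive roots, which is impossible (for instance because $\mathrm{ht}(\beta+\gamma)\geq 2>1=\mathrm{ht}(\alpha)$). Thus this subcase never occurs, so $\pi_{\mathfrak{b}_{-}}([\xi,x])$ always lies in $V_{\text{Toda}}$; infinitesimal invariance follows, and combined with the connectedness of $B$ this proves the lemma. As an alternative organization, one could treat $T$- and $U$-invariance separately, deducing $T$-invariance directly from the root-space decomposition and $U$-invariance from the same infinitesimal computation restricted to $\mathfrak{u}$; the decisive simple-root observation is identical in either approach.
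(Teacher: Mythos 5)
Your proof is correct, but it takes a genuinely different route from the paper's. You reduce the claim to an infinitesimal one: since $B$ is connected and the action is algebraic, a subspace of $\mathfrak{b}_{-}$ is $B$-stable if and only if it is stable under the differentiated action $\xi\bullet x=\pi_{\mathfrak{b}_{-}}([\xi,x])$ of $\mathfrak{b}$, and you then verify this on root-space spanning sets, the only substantive point being that a simple root $\alpha$ can never be written as $\beta+\gamma$ with $\beta,\gamma\in\Delta_{+}$ (by heights), so $\beta-\alpha$ is never a negative root. The paper instead works directly at the group level: it writes $b=t\exp(y)$ with $t\in T$, $y\in\mathfrak{u}$, expands $\Adj_{\exp(y)}$ as an exponential series, notes that all terms $(\adj_y)^j(\eV)$ with $j\geq 2$ lie in $\mathfrak{u}$ and are annihilated by $\pi_{\mathfrak{b}_{-}}$, and thereby arrives at the closed formula \eqref{Equation: Useful formula} expressing $b\ast\eV$ in coordinates. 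The trade-off is this: your argument is shorter and more conceptual, replacing the series computation by root combinatorics plus a standard connectedness principle; but the paper's computation is not performed merely to establish invariance --- the explicit formula \eqref{Equation: Useful formula} is the workhorse for the rest of the paper (the classification of $B$-orbits in Proposition \ref{Proposition: Bijection with I}, the closure order in Proposition \ref{Proposition: Closure order}, and the equivariance and differential computations for $\theta$ and $\nu$ in Section \ref{Section: The Toda lattice and G/Z x S}), so the paper obtains that formula as a by-product of its proof, whereas under your approach it would still have to be derived separately.
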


\begin{proof}
Suppose that $\eV\in V_{\text{Toda}}$ and write 
\begin{align}\label{eq: decomposition for v}
\eV=\eV_{(0)}+\sum_{\alpha\in\Pi}\eV_{-\alpha}e_{-\alpha},
\end{align} 
where $\eV_{(0)}\in\mathfrak{t}$ and $\eV_{-\alpha}\in\mathbb{C}$ for all $\alpha\in\Pi$. Given $b\in B$, we may write $b=t\exp(y)$ with $y\in\mathfrak{u}$ and $t\in T$. %(cf. the proof of Lemma \ref{Lemma: B-stabilizer}). 
Note that $y$ is given by
\begin{align}\label{eq: decomposition for y}
y=\left(\sum_{\alpha\in\Pi}y_{\alpha}e_{\alpha}\right)+y_{(\geq 2)},
\end{align}
where $y_{\alpha}\in\mathbb{C}$ for all $\alpha\in\Pi$ and
$$y_{(\geq 2)}\in\mathfrak{g}_{(\geq 2)}:=\bigoplus_{\substack{\alpha\in\Delta\\ \mathrm{ht}(\alpha)\geq 2}}\mathfrak{g}_{\alpha}.$$

Our objective is to prove that $b\ast \eV\in V_{\text{Toda}}$. Accordingly, note that
\begin{equation*}%\label{eq:computation of b*v}
\begin{split}
b\ast \eV & = \pi_{\mathfrak{b}_{-}}(\Adj_b(\eV))
= \pi_{\mathfrak{b}_{-}}(\Adj_t(\Adj_{\exp(y)}(\eV)))
 = \pi_{\mathfrak{b}_{-}}\bigg(\Adj_t\bigg(\eV+[y,\eV]+\sum_{j=2}^{\infty}\frac{1}{j!}(\adj_y)^j(\eV)\bigg)\bigg).
\end{split}
\end{equation*}
Note that $(\adj_y)^j(\eV)\in\mathfrak{u}$ for all $j\geq 2$, implying that $\Adj_t((\adj_y)^j(\eV))\in\mathfrak{u}$ for all such $j$. Since $\pi_{\mathfrak{b}_{-}}$ annihilates $\mathfrak{u}$, our calculation reduces to 
\begin{equation*}%\label{Equation: Reduced}
b\ast \eV = \pi_{\mathfrak{b}_{-}}(\Adj_t(\eV+[y,\eV])).
\end{equation*}
Now observe that \eqref{eq: decomposition for v} gives 
$$\Adj_t(\eV)=\eV_{(0)}+\sum_{\alpha\in\Pi}\alpha(t)^{-1}\eV_{-\alpha}e_{-\alpha},$$ while \eqref{eq: decomposition for v} and \eqref{eq: decomposition for y} imply that
$$\Adj_t([y,\eV])=\left(\sum_{\alpha\in\Pi}y_{\alpha}\eV_{-\alpha}h_{\alpha}\right)+z$$ for some $z\in\mathfrak{u}$ (where we recall that $h_{\alpha}=[e_{\alpha},e_{-\alpha}]$, $\alpha\in\Pi$). 
It follows that
%Together with \eqref{Equation: Reduced}, these two facts imply that
\begin{equation}\label{Equation: Useful formula}b\ast \eV=\bigg(\eV_{(0)}+\sum_{\alpha\in\Pi}y_{\alpha}\eV_{-\alpha}h_{\alpha}\bigg)+\sum_{\alpha\in\Pi}\alpha(t)^{-1}\eV_{-\alpha}e_{-\alpha}.\end{equation}
In particular, $b\ast \eV\in V_{\text{Toda}}$.
\end{proof}

\subsection{The $B$-orbit stratification of $V_{\textnormal{Toda}}$}\label{Subsection: The B-orbit stratification of V} 

We will benefit from an indexing of the $B$-orbits in $V_{\text{Toda}}$. Accordingly, let $\mathcal{I}$ denote the set of all pairs $(S,\eLieT)$ satisfying the following two conditions:
\begin{equation}\label{Equation: Defining condition for I} S\subseteq \Pi\text{ and }\eLieT\in\bigoplus_{\alpha\in\Pi\setminus S}\mathbb{C}h_{\alpha}.\end{equation}
Given such a pair $(S,\eLieT)$, let $\mathcal{O}_{(S,\eLieT)}\subseteq V_{\text{Toda}}$ denote the $B$-orbit of the element $$\eV_{(S,\eLieT)}:=\eLieT+\sum_{\alpha\in S}e_{-\alpha}\in V_{\text{Toda}}.$$ 

\begin{proposition}\label{Proposition: Bijection with I}
The assignment $(S,\eLieT)\mapsto\mathcal{O}_{(S,\eLieT)}$ is a bijection from $\mathcal{I}$ to the set of $B$-orbits in $V_{\emph{Toda}}$.
\end{proposition}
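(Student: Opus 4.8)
The plan is to read off from the explicit orbit formula \eqref{Equation: Useful formula} two quantities attached to a vector in $V_{\text{Toda}}$ that are manifestly $B$-invariant, and then to show that these quantities recover exactly the pair $(S,z)$. Throughout, write an arbitrary $\eV\in V_{\text{Toda}}$ as $\eV=\eV_{(0)}+\sum_{\alpha\in\Pi}\eV_{-\alpha}e_{-\alpha}$ with $\eV_{(0)}=\sum_{\alpha\in\Pi}c_{\alpha}h_{\alpha}\in\mathfrak{t}$ (recall that $\{h_{\alpha}\}_{\alpha\in\Pi}$ is a basis of $\mathfrak{t}$), and set $S(\eV):=\{\alpha\in\Pi:\eV_{-\alpha}\neq 0\}$ together with $\eLieT(\eV):=\sum_{\alpha\in\Pi\setminus S(\eV)}c_{\alpha}h_{\alpha}$. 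The pair $(S(\eV),\eLieT(\eV))$ lies in $\mathcal{I}$ by construction, and the whole argument amounts to showing it is a complete $B$-invariant of $\eV$.

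First I would establish invariance and deduce injectivity. Writing $b=t\exp(\eLieU)$ as in the preceding lemma, formula \eqref{Equation: Useful formula} shows that the coefficient of $e_{-\alpha}$ in $b\ast\eV$ is $\alpha(t)^{-1}\eV_{-\alpha}$; since $\alpha(t)\in\mathbb{C}^{\times}$, this is nonzero exactly when $\eV_{-\alpha}$ is, so $S(b\ast\eV)=S(\eV)$. The same formula shows that the $h_{\alpha}$-coefficient of the $\mathfrak{t}$-part of $b\ast\eV$ is $c_{\alpha}+\eLieU_{\alpha}\eV_{-\alpha}$, which reduces to $c_{\alpha}$ whenever $\alpha\notin S(\eV)$; combined with the invariance of $S$, this gives $\eLieT(b\ast\eV)=\eLieT(\eV)$. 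Since the representative plainly satisfies $S(\eV_{(S,\eLieT)})=S$ and $\eLieT(\eV_{(S,\eLieT)})=\eLieT$, injectivity is immediate: if $\mathcal{O}_{(S',\eLieT')}=\mathcal{O}_{(S,\eLieT)}$ then $\eV_{(S',\eLieT')}=b\ast \eV_{(S,\eLieT)}$ for some $b\in B$, and applying the two invariants forces $S'=S$ and $\eLieT'=\eLieT$.

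For surjectivity I would carry each $\eV$ to $\eV_{(S(\eV),\eLieT(\eV))}$ in two steps. Using the torus, I first normalize the negative-root coefficients: because the simple roots are linearly independent characters of $T$, the homomorphism $t\mapsto(\alpha(t))_{\alpha\in S(\eV)}$ is surjective onto $(\mathbb{C}^{\times})^{S(\eV)}$, so there is $t\in T$ with $\alpha(t)=\eV_{-\alpha}$ for all $\alpha\in S(\eV)$; by \eqref{Equation: Useful formula} this replaces each such $\eV_{-\alpha}$ by $1$ while leaving $\eV_{(0)}$ unchanged. Next, applying $\exp(\eLieU)$ with $\eLieU=\sum_{\alpha\in\Pi}\eLieU_{\alpha}e_{\alpha}$ alters the $\mathfrak{t}$-component of the normalized vector to $\eV_{(0)}+\sum_{\alpha\in S(\eV)}\eLieU_{\alpha}h_{\alpha}$ while fixing all $e_{-\alpha}$ coefficients; choosing $\eLieU_{\alpha}=-c_{\alpha}$ for $\alpha\in S(\eV)$ cancels precisely the $h_{\alpha}$-components indexed by $S(\eV)$, leaving $\eLieT(\eV)+\sum_{\alpha\in S(\eV)}e_{-\alpha}=\eV_{(S(\eV),\eLieT(\eV))}$. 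The product of these two elements of $B$ thus moves $\eV$ into $\mathcal{O}_{(S(\eV),\eLieT(\eV))}$, so every $B$-orbit has the claimed form.

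The one point demanding genuine care—the main obstacle—is the surjectivity of the torus homomorphism $t\mapsto(\alpha(t))_{\alpha\in S(\eV)}$, which licenses the scaling step. I would justify it via the character-lattice criterion: a homomorphism of complex tori is surjective if and only if the dual map on character lattices is injective, and here that dual map sends the standard basis of $\mathbb{Z}^{S(\eV)}$ to the simple roots $\{\alpha\}_{\alpha\in S(\eV)}$, which are $\mathbb{Z}$-linearly independent because $\Pi$ is a basis of $\mathfrak{t}^{*}$. (This uses only semisimplicity of $G$, not simple-connectedness.) Everything else is a direct reading of \eqref{Equation: Useful formula}, so once this scaling is in hand the argument closes at once.
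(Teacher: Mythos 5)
Your proof is correct and follows essentially the same route as the paper's: both arguments read everything off the explicit action formula \eqref{Equation: Useful formula}, comparing $\mathfrak{t}$-components and negative-simple-root coefficients to get injectivity, and producing an explicit element $b=t\exp(y)\in B$ to get surjectivity. The only differences are cosmetic — you package the comparison as a pair of complete $B$-invariants and move $\eV$ onto the standard representative in two steps (torus, then unipotent) rather than moving the representative onto $\eV$ in one step, and you explicitly justify the torus-scaling step (surjectivity of $t\mapsto(\alpha(t))_{\alpha\in S}$) that the paper invokes implicitly with ``choose $t\in T$ such that $\alpha(t)^{-1}=\eV_{-\alpha}$.''
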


\begin{proof}
Suppose that $(S,\eLieT),(S',\eLieT')\in\mathcal{I}$ are such that $\mathcal{O}_{(S,\eLieT)}=\mathcal{O}_{(S',\eLieT')}$. It follows that $\eV_{(S',\eLieT')}=b\ast \eV_{(S,\eLieT)}$ for some $b\in B$. Let us write $b=t\exp(y)$ with $y\in\mathfrak{u}$ of the form \eqref{eq: decomposition for y} and $t\in T$, so that \eqref{Equation: Useful formula} gives
\begin{equation}\label{Equation: Borel on standard} b\ast \eV_{(S,\eLieT)}=\bigg(\eLieT+\sum_{\alpha\in S}y_{\alpha}h_{\alpha}\bigg)+\sum_{\alpha\in S}\alpha(t)^{-1}e_{-\alpha}.\end{equation} In particular, $\eV_{(S',\eLieT')}=b\ast \eV_{(S,\eLieT)}$ is then the statement that
$$\eLieT'+\sum_{\alpha\in S'}e_{-\alpha}=\bigg(\eLieT+\sum_{\alpha\in S}y_{\alpha}h_{\alpha}\bigg)+\sum_{\alpha\in S}\alpha(t)^{-1}e_{-\alpha}.$$ It follows immediately that $S=S'$ and $\alpha(t)=1$ for all $\alpha\in S$, allowing us to write
$$\eLieT'=\eLieT+\sum_{\alpha\in S}y_{\alpha}h_{\alpha}.$$
Since $\eLieT$ and $\eLieT'$ are linear combinations of $\{h_{\alpha}:\alpha\in\Pi\setminus S\}$, this equation implies that $\eLieT=\eLieT'$. In other words, $(S,\eLieT)=(S',\eLieT')$ and our assignment is injective.

To establish surjectivity, let $\mathcal{O}\subseteq V_{\text{Toda}}$ be a $B$-orbit and choose a point $\eV\in\mathcal{O}$. Let us write
$$\eV=\sum_{\alpha\in\Pi}\eV_{\alpha}h_{\alpha}+\sum_{\alpha\in\Pi}\eV_{-\alpha}e_{-\alpha}$$ with $\eV_{\alpha},\eV_{-\alpha}\in\mathbb{C}$ for all $\alpha\in\Pi$, and set $$S:=\{\alpha\in\Pi:\eV_{-\alpha}\neq 0\},\quad \eLieT:=\sum_{\alpha\in\Pi\setminus S}\eV_{\alpha}h_{\alpha}.$$ Now choose $t\in T$ such that $\alpha(t)^{-1}=\eV_{-\alpha}$ for all $\alpha\in S$ and set $$y:=\sum_{\alpha\in\Pi}\eV_{\alpha}e_{\alpha}\in\mathfrak{u}.$$ If $b=t\exp(y)$, then one can use \eqref{Equation: Borel on standard} to see that 
$b\ast \eV_{(S,\eLieT)}=\eV$. It follows that the $B$-orbits of $\eV_{(S,\eLieT)}$ and $\eV$ coincide, i.e. $\mathcal{O}_{(S,\eLieT)}=\mathcal{O}$. Our assignment is therefore surjective, completing the proof.
\end{proof}

Note that \eqref{Equation: Borel on standard} allows us to give the following explicit description of $\mathcal{O}_{(S,\eLieT)}$:
\begin{equation*}%\label{Equation: Combinatorial orbit description}
\mathcal{O}_{(S,\eLieT)}=\bigg\{\bigg(\eLieT+\sum_{\alpha\in S}\eV_{\alpha}h_{\alpha}\bigg)+\sum_{\alpha\in S}\eV_{-\alpha}e_{-\alpha}:\eV_{\alpha}\in\mathbb{C},\eV_{-\alpha}\in\mathbb{C}^{\times}\text{ for all }\alpha\in S\bigg\}.\end{equation*} Let us abuse notation slightly and write this as
\begin{equation}\label{Second combinatorial orbit description}
\mathcal{O}_{(S,\eLieT)}=\bigg(\eLieT+\bigoplus_{\alpha\in S}\mathbb{C}h_{\alpha}\bigg)+\sum_{\alpha\in S}\mathfrak{g}_{-\alpha}^{\times},
\end{equation} where $\mathfrak{g}_{-\alpha}^{\times}$ is the set of non-zero elements in $\mathfrak{g}_{-\alpha}$. 
Note that $\mathcal{O}_{(S,\eLieT)}\cong \mathbb{C}^{|S|}\times (\mathbb{C}^{\times})^{|S|}$ and $\dim \mathcal{O}_{(S,\eLieT)} = 2|S|$.

\begin{proposition}\label{Proposition: Closure order}
If $(S,\eLieT),(S',\eLieT')\in\mathcal{I}$, then we have 
\begin{equation}\label{Equation: Closure order description}\mathcal{O}_{(S',\eLieT')}\subseteq\overline{\mathcal{O}_{(S,\eLieT)}}\quad \Longleftrightarrow \quad S'\subseteq S \text{ and }\eLieT'-\eLieT\in\bigoplus_{\alpha\in S\setminus S'}\mathbb{C}h_{\alpha}.\end{equation}
\end{proposition}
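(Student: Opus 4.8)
The plan is to reduce the statement to a single explicit membership test, after first identifying $\overline{\mathcal{O}_{(S,\eLieT)}}$ with a concrete affine subspace of $V_{\text{Toda}}$.

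First I would compute the closure. By the explicit description \eqref{Second combinatorial orbit description}, the orbit $\mathcal{O}_{(S,\eLieT)}$ is the image of the injection $\mathbb{C}^{|S|}\times(\mathbb{C}^{\times})^{|S|}\hookrightarrow V_{\text{Toda}}$ sending $((\eV_\alpha),(\eV_{-\alpha}))$ to $\eLieT+\sum_{\alpha\in S}\eV_\alpha h_\alpha+\sum_{\alpha\in S}\eV_{-\alpha}e_{-\alpha}$. Its image lies in the affine subspace $A:=\eLieT+\bigoplus_{\alpha\in S}(\mathbb{C}h_\alpha\oplus\mathbb{C}e_{-\alpha})$, which is closed and irreducible, and the orbit is dense in $A$ because $(\mathbb{C}^{\times})^{|S|}$ is dense in $\mathbb{C}^{|S|}$. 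Hence $\overline{\mathcal{O}_{(S,\eLieT)}}=A$ (the ambient closure is unaffected by whether one takes it in $V_{\text{Toda}}$, $\mathfrak{b}_{-}$, or $\mathfrak{g}$, since these are linear subspaces). I would also record that $A$ is $B$-invariant, being the closure of a $B$-orbit.

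Next comes the reduction. Because $A=\overline{\mathcal{O}_{(S,\eLieT)}}$ is $B$-invariant and $\mathcal{O}_{(S',\eLieT')}$ is the $B$-orbit of $\eV_{(S',\eLieT')}=\eLieT'+\sum_{\alpha\in S'}e_{-\alpha}$, the containment $\mathcal{O}_{(S',\eLieT')}\subseteq\overline{\mathcal{O}_{(S,\eLieT)}}$ holds if and only if the single representative $\eV_{(S',\eLieT')}$ lies in $A$. Thus the whole proposition collapses to the assertion that $\eV_{(S',\eLieT')}\in A$ is equivalent to the right-hand conditions of \eqref{Equation: Closure order description}. To test this I would compare components in the direct sum $V_{\text{Toda}}=\mathfrak{t}\oplus\bigoplus_{\alpha\in\Pi}\mathfrak{g}_{-\alpha}$. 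The $e_{-\alpha}$-component of $\eV_{(S',\eLieT')}$ equals $1$ for $\alpha\in S'$ and $0$ otherwise, whereas elements of $A$ have vanishing $e_{-\alpha}$-component for every $\alpha\notin S$; matching these forces $S'\subseteq S$. The $\mathfrak{t}$-component then yields $\eLieT'-\eLieT\in\bigoplus_{\alpha\in S}\mathbb{C}h_\alpha$.

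The final step, and the only place requiring care, is to reconcile the membership condition $\eLieT'-\eLieT\in\bigoplus_{\alpha\in S}\mathbb{C}h_\alpha$ with the sharper condition $\eLieT'-\eLieT\in\bigoplus_{\alpha\in S\setminus S'}\mathbb{C}h_\alpha$ appearing in the proposition. Here I would invoke the defining constraints of $\mathcal{I}$: since $\eLieT\in\bigoplus_{\beta\in\Pi\setminus S}\mathbb{C}h_\beta$ and $\eLieT'\in\bigoplus_{\beta\in\Pi\setminus S'}\mathbb{C}h_\beta$, and $S'\subseteq S$, both $\eLieT$ and $\eLieT'$ have vanishing $h_\alpha$-coefficient for every $\alpha\in S'$. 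Consequently $\eLieT'-\eLieT$ automatically has no $h_\alpha$-component for $\alpha\in S'$, so under the standing hypotheses the two conditions on $\eLieT'-\eLieT$ coincide. Reading these equivalences in both directions gives \eqref{Equation: Closure order description}. I expect the closure computation and this last bookkeeping to be the substantive points, the coordinate comparison itself being routine.
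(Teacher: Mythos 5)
Your proof is correct and follows essentially the same route as the paper: both rest on the explicit orbit description \eqref{Second combinatorial orbit description}, identify the closure $\overline{\mathcal{O}_{(S,\eLieT)}}$ with the affine subspace $\eLieT+\bigoplus_{\alpha\in S}\left(\mathbb{C}h_{\alpha}\oplus\mathbb{C}e_{-\alpha}\right)$, and finish with the same bookkeeping from the defining conditions \eqref{Equation: Defining condition for I} of $\mathcal{I}$ to sharpen $\eLieT'-\eLieT\in\bigoplus_{\alpha\in S}\mathbb{C}h_{\alpha}$ to $\eLieT'-\eLieT\in\bigoplus_{\alpha\in S\setminus S'}\mathbb{C}h_{\alpha}$. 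Your reduction to testing the single representative $\eV_{(S',\eLieT')}$ via $B$-invariance of the closure simply makes explicit what the paper dismisses as ``a straightforward consequence.''
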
 

\begin{proof}
It is a straightforward consequence of \eqref{Second combinatorial orbit description} that  
\begin{align}\mathcal{O}_{(S',\eLieT')}\subseteq\overline{\mathcal{O}_{(S,\eLieT)}} \nonumber & \quad\Longleftrightarrow\quad S'\subseteq S \text{ and }\eLieT'+\bigoplus_{\alpha\in S'}\mathbb{C}h_{\alpha}\subseteq \eLieT +\bigoplus_{\alpha\in S}\mathbb{C}h_{\alpha} \nonumber \\
\label{Equation: The one}& \quad\Longleftrightarrow\quad S'\subseteq S \text{ and } \eLieT'-\eLieT\in \bigoplus_{\alpha\in S}\mathbb{C}h_{\alpha}.
\end{align}
%\begin{align}\mathcal{O}_{(S',\eLieT')}\subseteq\overline{\mathcal{O}_{(S,\eLieT)}} \nonumber & \quad\Longleftrightarrow\quad S'\subseteq S \text{ and }x'+\bigg(\bigoplus_{\alpha\in S'}\mathbb{C}h_{\alpha}\bigg)\subseteq x +\bigg(\bigoplus_{\alpha\in S}\mathbb{C}h_{\alpha}\bigg) \nonumber \\
%\label{Equation: The one}& \quad\Longleftrightarrow\quad S'\subseteq S \text{ and } z'-z\in \bigoplus_{\alpha\in S}\mathbb{C}h_{\alpha}.
%\nonumber
%\end{align}
Also, as $(S,\eLieT),(S',\eLieT')\in\mathcal{I}$, one can use \eqref{Equation: Defining condition for I} to see that $$\eLieT'-\eLieT\in\bigoplus_{\alpha\in\Pi\setminus (S\cap S')}\mathbb{C}h_{\alpha}.$$ The direct sum in \eqref{Equation: The one} may therefore be taken over $\alpha\in S\cap (\Pi\setminus (S\cap S'))=S\setminus S'$. This completes the proof. 
\end{proof}

Let us give some extra context for Proposition \ref{Proposition: Closure order}. Indeed, note that the left hand side of the equivalence \eqref{Equation: Closure order description} may be stated as follows: $\mathcal{O}_{(S',\eLieT')}\leq\mathcal{O}_{(S,\eLieT)}$ in the closure order on the $B$-orbits in $V_{\text{Toda}}$. We conclude that the right hand side defines a partial order on $\mathcal{I}$ for which $(S,\eLieT)\mapsto\mathcal{O}_{(S,\eLieT)}$ is a poset isomorphism, namely
\begin{equation}\label{Equation: Partial order}
(S',\eLieT')\leq (S,\eLieT)\quad\Longleftrightarrow\quad S'\subseteq S \text{ and }\eLieT'-\eLieT\in\bigoplus_{\alpha\in S\setminus S'}\mathbb{C}h_{\alpha}.
\end{equation}

Observe that $(\Pi,0)$ is the unique maximal element of $\mathcal{I}$, meaning that \begin{equation}\label{Equation: Borel orbit description}\mathcal{O}_{(\Pi,0)}=\mathfrak{t}+\sum_{\alpha\in \Pi}\mathfrak{g}_{-\alpha}^{\times}\end{equation} is the unique maximal $B$-orbit in $V_{\text{Toda}}$.
 
\subsection{The Toda lattice}\label{Subsection: The Toda lattice}
Let us set 
\begin{align}\label{eq: def of toda lattice}
\mathcal{O}_{\text{Toda}}:=\mathcal{O}_{(\Pi,0)}=\mathfrak{t}+\sum_{\alpha\in \Pi}\mathfrak{g}_{-\alpha}^{\times}.
\end{align}
Since the isomorphism \eqref{Equation: Isomorphism of opposite Borel subalgebras} is $B$-equivariant, it restricts to a variety isomorphism between $\mathcal{O}_{\text{Toda}}$ and a coadjoint $B$-orbit. The latter orbit carries its canonical Kirillov--Kostant--Souriau symplectic form, and we shall let $\omega_{\text{Toda}}$ denote the corresponding symplectic form on $\mathcal{O}_{\text{Toda}}$. Note that \eqref{Equation: KKS form} then gives an expression for $\omega_{\text{Toda}}$, provided that we replace the coadjoint representation of $\mathfrak{b}$ on $\mathfrak{b}^*$ with the corresponding $\mathfrak{b}$-module structure on $\mathfrak{b}_{-}$.  
The tangent spaces of $\mathcal{O}_{\text{Toda}}$ are then given by
\begin{equation}\label{Equation: Tangent space to our orbit}T_{\eToda}\mathcal{O}_{\text{Toda}}=\pi_{\mathfrak{b}_{-}}(\adj_{\eToda}(\mathfrak{b}))\subseteq\mathfrak{b}_{-}\end{equation}
for all $\eToda\in\mathcal{O}_{\text{Toda}}$, and our expression for $\omega_{\text{Toda}}$ is  
\begin{equation}\label{Equation: Modified KKS}(\omega_{\text{Toda}})_{\eToda}(\pi_{\mathfrak{b}_{-}}(\adj_{\eToda}(x)),\pi_{\mathfrak{b}_{-}}(\adj_{\eToda}(x')))=\langle \eToda,[x,x']\rangle,\quad x,x'\in\mathfrak{b}.\end{equation}

One can study completely integrable systems on $\mathcal{O}_{\text{Toda}}$ with respect to the above-defined symplectic form. Note that $\mathcal{O}_{\text{Toda}}$ is $2r$-dimensional, so that any such system necessarily consists of exactly $r$ functions. To construct some of these functions, recall the notation and conventions adopted in Subsection \ref{Subsection: Mishchenko-Fomenko subalgebras}. Consider the regular nilpotent element $$\zeta:=-\sum_{\alpha\in\Pi}e_{\alpha}$$ and the argument-shifted polynomial $f_{i,\zeta}\in\mathbb{C}[\mathfrak{g}]$ given by 
\begin{align}\label{eq: definition of integrals of Toda lattice}
f_{i,\zeta}(x):=f_i(x+\zeta),\quad x\in\mathfrak{g}, 
\end{align}
where $i\in\{1,\ldots,r\}$ and $f_1,\ldots,f_r$ are our chosen generators of $\mathbb{C}[\mathfrak{g}]^{G}$. Furthermore, let $$\sigma_i:=f_{i,\zeta}\big\vert_{\mathcal{O}_{\text{Toda}}}:\mathcal{O}_{\text{Toda}}\rightarrow\mathbb{C}$$ denote the restriction of each $f_{i,\zeta}$ to $\mathcal{O}_{\text{Toda}}\subseteq\mathfrak{g}$.

\begin{theorem}\emph{(cf. \cite[Theorem 29]{KostantFlag})}\label{Theorem: Toda system}
The functions $\sigma_1,\ldots,\sigma_r$ form a completely integrable system on $\mathcal{O}_{\emph{Toda}}$.
\end{theorem}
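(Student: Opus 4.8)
The plan is to verify the two defining conditions of a completely integrable system on the $2r$-dimensional symplectic variety $(\mathcal{O}_{\text{Toda}},\omega_{\text{Toda}})$: that $\sigma_1,\ldots,\sigma_r$ Poisson-commute in pairs, and that they are functionally independent. Since the count matches ($r=\tfrac{1}{2}\dim\mathcal{O}_{\text{Toda}}$), these two properties suffice. The key organizing idea is to pass from the symplectic orbit $\mathcal{O}_{\text{Toda}}$ to the ambient Poisson variety $\mathfrak{g}$ (with its Lie--Poisson structure transported via the Killing isomorphism \eqref{Equation: Killing isomorphism}), where the Mishchenko--Fomenko machinery of Theorem \ref{Theorem: Main MF} applies directly to the shifted element $\zeta$.

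First I would address Poisson-commutativity. The functions $\sigma_i=f_{i,\zeta}|_{\mathcal{O}_{\text{Toda}}}$ are restrictions of the argument-shifted polynomials $f_{i,\zeta}(x)=f_i(x+\zeta)$. Observe that each $f_{i,\zeta}$ is, up to constants, a sum of the Mishchenko--Fomenko polynomials $f^{\zeta}_{ij}$ from the expansion \eqref{Equation: Taylor expansion}: indeed, since $\zeta$ is regular nilpotent, Remark \ref{rem: decomposition} gives $f_i(x+\zeta)=\sum_{j=0}^{d_i-1}f^{\zeta}_{ij}(x)$. The polynomials $f^{\zeta}_1,\ldots,f^{\zeta}_{\ell}$ Poisson-commute in $\mathbb{C}[\mathfrak{g}]$ by Theorem \ref{Theorem: Main MF}. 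The subtlety is that Poisson-commutativity on $\mathfrak{g}$ does not automatically descend to the symplectic leaf $\mathcal{O}_{\text{Toda}}$, because $\mathcal{O}_{\text{Toda}}$ is a $B$-coadjoint orbit (via \eqref{Equation: Isomorphism of opposite Borel subalgebras}), not a $G$-coadjoint orbit inside $\mathfrak{g}$. I would therefore reinterpret the $\sigma_i$ as pullbacks of Poisson-commuting functions on $\mathfrak{b}^*\cong\mathfrak{b}_{-}$ along a Poisson map. Concretely, the inclusion-type map $\mathcal{O}_{\text{Toda}}\hookrightarrow\mathfrak{b}_{-}\xrightarrow{+\zeta}\mathfrak{g}$ (shifting by $\zeta$ and including into $\mathfrak{g}$) should be arranged so that the restriction of the Lie--Poisson bracket of $\mathfrak{g}$ to the relevant functions is computed through the $B$-equivariant structure; the crucial point is that the $f_i$ are $\operatorname{Ad}$-invariant, so their shifts $f_{i,\zeta}$ have controlled brackets that are insensitive to the difference between the $\mathfrak{g}$- and $\mathfrak{b}$-level Poisson structures when restricted to $\mathcal{O}_{\text{Toda}}$.

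For functional independence, I would argue that $df_{1,\zeta}\wedge\cdots\wedge df_{r,\zeta}$ is nonzero on a dense open subset of $\mathcal{O}_{\text{Toda}}$. Since $\zeta$ is regular and the $f_i$ are algebraically independent generators of $\mathbb{C}[\mathfrak{g}]^G$, their differentials $df_1,\ldots,df_r$ are independent on $\mathfrak{g}_{\text{reg}}$; translating by the fixed element $\zeta$, the $df_{i,\zeta}$ are independent at every $\eToda$ with $\eToda+\zeta\in\mathfrak{g}_{\text{reg}}$. I would then check that a generic point of $\mathcal{O}_{\text{Toda}}$ satisfies $\eToda+\zeta\in\mathfrak{g}_{\text{reg}}$ (this uses that $\mathcal{O}_{\text{Toda}}$ is irreducible and that $\mathfrak{g}_{\text{reg}}$ is open dense, so the shifted image meets it), and finally confirm that independence of the ambient differentials restricts to independence of the $d\sigma_i$ along the tangent spaces \eqref{Equation: Tangent space to our orbit}.

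The main obstacle I anticipate is the Poisson-commutativity step, specifically reconciling the $G$-level Mishchenko--Fomenko involutivity (Theorem \ref{Theorem: Main MF}, living on $\mathfrak{g}$) with the $B$-level symplectic structure $\omega_{\text{Toda}}$ on $\mathcal{O}_{\text{Toda}}$. The cleanest resolution is likely to show that the map $\mathcal{O}_{\text{Toda}}\to\mathfrak{g}$, $\eToda\mapsto\eToda+\zeta$, together with the Poisson structures in play, renders the $\sigma_i$ restrictions of a Poisson-commuting family in such a way that the bracket computation reduces to \eqref{Equation: Modified KKS} and the $\operatorname{Ad}$-invariance of the $f_i$. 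Since this is cited as a known result of Kostant (cf.\ \cite[Theorem 29]{KostantFlag}), I would also be prepared to invoke that reference directly for the involutivity, while supplying the independence argument in full as above.
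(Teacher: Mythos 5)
Your proposal contains genuine gaps at both of its main steps. Note first what the paper actually does: it does not prove this theorem from scratch, but deduces it from Kostant's result \cite[Theorem 29]{KostantFlag} by interchanging the roles of positive and negative roots and observing that translation by $\zeta$ gives an isomorphism of symplectic varieties $\mathcal{O}_{\text{Toda}}\rightarrow\zeta+\mathcal{O}_{\text{Toda}}$. Your fallback position (``invoke the reference directly for the involutivity'') is therefore essentially the paper's entire argument; the problem lies in the parts you propose to prove yourself. For involutivity, the route through Remark \ref{rem: decomposition} and Theorem \ref{Theorem: Main MF} cannot work as stated: Mishchenko--Fomenko involutivity holds with respect to the Lie--Poisson structure on $\mathfrak{g}\cong\mathfrak{g}^*$, whereas the bracket determined by $\omega_{\text{Toda}}$ is that of a coadjoint $B$-orbit, i.e.\ a symplectic leaf of the Lie--Poisson structure on $\mathfrak{b}^*\cong\mathfrak{b}_{-}$ via \eqref{Equation: Isomorphism of opposite Borel subalgebras}. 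Since $\mathcal{O}_{\text{Toda}}$ is not a Poisson subvariety of $\mathfrak{g}$, the vanishing of $\{f_i^{\zeta},f_j^{\zeta}\}$ on $\mathfrak{g}$ says nothing about the brackets of the restrictions $\sigma_i$ with respect to $\omega_{\text{Toda}}$; these are different Poisson structures on different spaces. You flag this subtlety yourself, but the proposed resolution (``controlled brackets \ldots insensitive to the difference'') is not an argument. The tool that actually accomplishes this is the Adler--Kostant--Symes involution theorem, which exploits the splitting $\mathfrak{g}=\mathfrak{b}_{-}\oplus\mathfrak{u}$ together with $\Adj$-invariance of the $f_i$ --- i.e.\ Kostant's own mechanism, not Mishchenko--Fomenko theory.

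The independence argument also has an unproven step, and it is precisely the hard one. Linear independence of $df_{1,\zeta},\ldots,df_{r,\zeta}$ as covectors on $\mathfrak{g}$ at a point $v\in\mathcal{O}_{\text{Toda}}$ does \emph{not} imply independence of their restrictions to the $2r$-dimensional tangent space \eqref{Equation: Tangent space to our orbit}: a nonzero linear combination of the ambient differentials could annihilate $T_{v}\mathcal{O}_{\text{Toda}}$. Under the Killing form, the gradients of the $f_{i,\zeta}$ at $v$ span $\mathrm{ker}(\adj_{v+\zeta})$, so what must actually be shown is that no nonzero element of this centralizer is Killing-orthogonal to $T_{v}\mathcal{O}_{\text{Toda}}$; that computation is the substance of Kostant's independence statement and is absent from your sketch. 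Finally, your genericity claim is a non sequitur: the fact that $\mathfrak{g}_{\text{reg}}$ is open dense in $\mathfrak{g}$ does not imply that it meets the low-dimensional subvariety $\zeta+\mathcal{O}_{\text{Toda}}$. The conclusion is nonetheless true in the strongest possible form --- every $v\in\mathcal{O}_{\text{Toda}}$ satisfies $v+\zeta\in H_0^{\times}\subseteq\mathfrak{g}_{\text{reg}}$ by \cite[Lemma 10]{KostantLie}, a fact the paper uses elsewhere --- but the argument you give for it would not survive scrutiny.
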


The integrable system in Theorem \ref{Theorem: Toda system} is called the \textit{Toda lattice}.  

\begin{remark}
	To deduce that Theorem \ref{Theorem: Toda system} is equivalent to Kostant's result \cite[Theorem 29]{KostantFlag}, one interchanges the roles of the positive and negative roots throughout \cite{KostantFlag}. Kostant's result then becomes the statement that $f_1,\ldots,f_r$ restrict to form a completely integrable system on $\zeta+\mathcal{O}_{\text{Toda}}\subseteq\mathfrak{g}$, where the symplectic structure on $\zeta+\mathcal{O}_{\text{Toda}}$ comes from \cite[Proposition 4]{KostantFlag}. Furthermore, it is straightforward to verify that translation by $\zeta$ defines a symplectic variety isomorphism $\mathcal{O}_{\text{Toda}}\rightarrow\zeta+\mathcal{O}_{\text{Toda}}$. The equivalence of Theorem \ref{Theorem: Toda system} and \cite[Theorem 29]{KostantFlag} is then immediate.
\end{remark}

\section{Symplectic geometry on $G/Z\times S_{\textnormal{reg}}$}\label{Section: Symplectic geometry on G/Z x S}
We now briefly introduce $G/Z\times S_{\text{reg}}$, an affine symplectic variety closely related to the one featured in \cite{CrooksBulletin} and \cite{CrooksRayan}. While we rely heavily on parts of the aforementioned two papers, our discussion is by no means limited to review.         

\subsection{$\mathfrak{sl}_2$-triples and Slodowy slices}\label{Subsection: Slodowy slice}
Recall that $(X,H,Y)\in\mathfrak{g}^{\oplus 3}$ is called an $\mathfrak{sl}_2$-\textit{triple} when the identities 
\begin{equation}\label{Equation: sl2 relations}
[X,Y]=H,\quad [H,X]=2X,\quad [H,Y]=-2Y
\end{equation}
hold in $\mathfrak{g}$. These three elements then span a subalgebra of $\mathfrak{g}$ isomorphic to $\mathfrak{sl}_2$. One can verify that $X$ and $Y$ then belong to the same adjoint orbit, implying that $X\in\mathfrak{g}_{\text{reg}}$ if and only if $Y\in\mathfrak{g}_{\text{reg}}$. In this case, we shall call $(X,H,Y)$ a \textit{regular} $\mathfrak{sl}_2$-\textit{triple}. 

Each $\mathfrak{sl}_2$-triple $(X,H,Y)$ determines a \textit{Slodowy slice},
$$S(X,H,Y):=X+\mathrm{ker}(\adj_Y):=\{X+z:z\in\mathrm{ker}(\adj_Y)\}\subseteq\mathfrak{g}.$$ This affine-linear subspace of $\mathfrak{g}$ is known to intersect adjoint orbits in a transverse fashion, i.e.
\begin{equation}\label{Equation: Transversality condition}\mathfrak{g}=T_{\eSreg}\mathcal{O}(\eSreg)\oplus T_{\eSreg}S(X,H,Y)\end{equation} 
for all $\eSreg\in S(X,H,Y)$ (cf. \cite[Lemma 13]{KostantLie}), where $\mathcal{O}(\eSreg)\subseteq\mathfrak{g}$ is the adjoint orbit of $\eSreg$.\footnote{Since $T_{\eSreg}\mathcal{O}(\eSreg)=\mathrm{image}(\adj_{\eSreg})$ and $T_{\eSreg}S(X,H,Y)=\mathrm{ker}(\adj_{Y})$, transversality at $p=X$ is the familiar representation-theoretic fact that $\mathfrak{g}=\mathrm{image}(\adj_X)\oplus\mathrm{ker}(\adj_Y)$.}
If $(X,H,Y)$ is regular, then $S(X,H,Y)\subseteq\mathfrak{g}_{\text{reg}}$ holds and
each regular adjoint orbit has a unique point of intersection with $S(X,H,Y)$ (see \cite[Theorem 8]{KostantLie}). In particular,
\begin{equation}\label{Equation: Parametrization of regular orbits}S(X,H,Y)\rightarrow\mathfrak{g}_{\text{reg}}/G,\quad \eSreg\mapsto\mathcal{O}(\eSreg)
\end{equation}	  
defines a bijection. 

We now choose a specific regular $\mathfrak{sl}_2$-triple, to be fixed for the rest of this paper and denoted $(\xi,h,\eta)$. Accordingly, let $h$ be the unique element of $\mathfrak{t}$ satisfying $\alpha(h)=-2$ for all $\alpha\in\Pi$. Since the vectors $h_{\alpha}$ form a basis of $\mathfrak{t}$, we may write
$$h=\sum_{\alpha\in\Pi}c_{\alpha}h_{\alpha}$$
for uniquely determined coefficients $c_{\alpha}\in\mathbb{C}$. Now let $\xi,\eta\in\mathfrak{g}$ be the elements given by
\begin{equation}\label{Equation: Nilpositive and Nilnegative}
\xi:=\sum_{\alpha\in\Pi}e_{-\alpha} 
\quad \text{and} \quad
\eta:=-\sum_{\alpha\in\Pi}c_{\alpha}e_{\alpha}.
\end{equation}
%and
%\begin{equation}\label{Equation: Nilpositive and Nilnegative}\eta:=-\sum_{\alpha\in\Pi}c_{\alpha}e_{\alpha},\end{equation}
%respectively. 
It is straightforward to verify that $(\xi,h,\eta)$ is indeed a regular $\mathfrak{sl}_2$-triple (cf. \cite[Example 3.7, Proposition 6.10]{Crooks}). Let us set $$S_{\text{reg}}:=S(\xi,h,\eta)=\xi+\mathrm{ker}(\adj_{\eta}).$$  

\subsection{The symplectic structure on {\rm $G/Z\times S_{\text{reg}}$}}\label{Subsection: The holomorphic symplectic structure}
Note that left and right multiplication give rise to the following two commuting actions of $G$ on itself:
\begin{subequations}
\begin{align}
        & h\cdot g:=hg, \quad h,g\in G\label{Equation: Left action}\\
        & h\cdot g:=gh^{-1},\quad h,g\in G\label{Equation: Right action}.
\end{align}
\end{subequations}
It follows that the cotangent lifts of \eqref{Equation: Left action} and \eqref{Equation: Right action} are commuting Hamiltonian actions of $G$ on $T^*G$. We shall let $\mu_L:T^*G\rightarrow\mathfrak{g}$ and $\mu_R:T^*G\rightarrow\mathfrak{g}$ denote the moment maps for these respective cotangent lifts (see \eqref{Equation: Cotangent lift moment map}). 

The left trivialization of $T^*G$ results in a vector bundle isomorphism $T^*G\cong G\times\mathfrak{g}^*$, which by \eqref{Equation: Killing isomorphism} amounts to a vector bundle isomorphism  
$T^*G\cong G\times\mathfrak{g}$. 
The symplectic form, commuting Hamiltonian $G$-actions, and moment maps on $T^*G$ thereby correspond to such things on $G\times\mathfrak{g}$. Let $\Omega$ denote the resulting symplectic form on $G\times\mathfrak{g}$, noting that $\Omega$ restricts to the following bilinear form on each tangent space $T_{(g,x)}(G\times\mathfrak{g})=T_gG\oplus\mathfrak{g}$ (see \cite[Section 5, Equation (14L)]{Marsden}):
\begin{equation}\label{Equation: Symplectic form}
\Omega_{(g,x)}\bigg(\big(d_eL_g(y_1),z_1\big),\big(d_eL_g(y_2),z_2\big)\bigg)=\langle y_1,z_2\rangle-\langle y_2,z_1\rangle+\langle x,[y_1,y_2]\rangle,
\end{equation}
where $y_1,y_2,z_1,z_2\in\mathfrak{g}$, $L_g:G\rightarrow G$ is left multiplication by $g$, and $d_eL_g:\mathfrak{g}\rightarrow T_gG$ is the differential of $L_g$ at the identity $e\in G$. The cotangent lifts of \eqref{Equation: Left action} and \eqref{Equation: Right action} can be shown to correspond, respectively, to the following actions of $G$ on $G\times\mathfrak{g}$; 
\begin{subequations}
\begin{align}& h\cdot (g,x):=(hg,x), \quad h\in G,\text{ }(g,x)\in G\times\mathfrak{g}\label{Equation: Left action on first factor},\\
& h\cdot (g,x):=(gh^{-1},\Adj_h(x)), \quad h\in G,\text{ }(g,x)\in G\times\mathfrak{g}. \label{Equation: Analogue of right cotangent lift}
\end{align}
\end{subequations}
In turn, these actions admit respective moment maps of
\begin{subequations}
\begin{align}
        & \mu_L:G\times\mathfrak{g}\rightarrow\mathfrak{g}, \quad (g,x)\mapsto\Adj_g(x)\label{Equation: Left moment map},\\
        & \mu_R:G\times\mathfrak{g}\rightarrow\mathfrak{g}, \quad (g,x)\mapsto -x\label{Equation: Right moment map}.
\end{align}
\end{subequations}

Now recall the regular $\mathfrak{sl}_2$-triple $(\xi,h,\eta)$ fixed in Subsection \ref{Subsection: Slodowy slice}, noting that $(-\xi,h,-\eta)$ is also a regular $\mathfrak{sl}_2$-triple. Note that the resulting Slodowy slice $S(-\xi,h,-\eta)\subseteq\mathfrak{g}$ is transverse to adjoint orbits in the sense of \eqref{Equation: Transversality condition}. It then follows from \cite[Statement (1.19)]{Guillemin} that $\mu_R^{-1}(S(-\xi,h,-\eta))$ is a symplectic subvariety of $G\times\mathfrak{g}$. We also have 
$$S(-\xi,h,-\eta)=-S(\xi,h,\eta)=-S_{\text{reg}},$$ which together with \eqref{Equation: Right moment map} implies that our symplectic subvariety is precisely $$\mu_R^{-1}(S(-\xi,h,-\eta))=G\times S_{\text{reg}}.$$ Note that \eqref{Equation: Left action on first factor} restricts to an action of $G$ on $G\times S_{\text{reg}}$, which is then necessarily Hamiltonian with moment map $\mu_L\vert_{G\times S_{\text{reg}}}$.

Let $Z$ denote the centre of $G$, noting that the semisimplicity of the latter group forces $Z$ to be finite. Note also that \eqref{Equation: Left action on first factor} can be restricted to a Hamiltonian action of $Z$ on $G\times S_{\text{reg}}$, and that this restricted action commutes with the original $G$-action. One thereby obtains a Hamiltonian action of $G\times Z$ on $G\times S_{\text{reg}}$. It follows that \eqref{Equation: Left action on first factor} descends to a Hamiltonian $G$-action on the symplectic quotient of $G\times S_{\text{reg}}$ by $Z$. Since $Z$ is finite, this quotient is precisely $(G\times S_{\text{reg}})/Z=G/Z\times S_{\text{reg}}$. The Hamiltonian $G$-action occurs by left multiplication on the first factor of $G/Z\times S_{\text{reg}}$, and this action's moment map is the result of letting $\mu_L\vert_{G\times S_{\text{reg}}}$ descend to the quotient by $Z$, i.e.  
\begin{equation}\label{Equation: The ideal moment map}\mu: G/Z\times S_{\text{reg}}\rightarrow\mathfrak{g},\quad (gZ,x)\mapsto \Adj_g(x).\end{equation} Let $\Omega_{\text{reg}}$ denote the symplectic form that $G/Z\times S_{\text{reg}}$ inherits as a symplectic quotient of $G\times S_{\text{reg}}$. 
%As we explained above, the symplectic form $\Omega$ on $G\times\mathfrak{g}$ restricts on $G\times S_{\text{reg}}$, and $\Omega_{\text{reg}}$ is determined by the proprerty
The symplectic form on $G\times S_{\text{reg}}$ is obtained by pulling $\Omega$ back along the inclusion $\iota:G\times S_{\text{reg}}\rightarrow G\times\mathfrak{g}$, so that we have
$$\pi_{Z}^*(\Omega_{\text{reg}})=\iota^*(\Omega),$$ where $\pi_{Z}:G\times S_{\text{reg}}\rightarrow G/Z\times S_{\text{reg}}$ is the quotient map. 

Now given $g\in G$ and $\eSreg\in S_{\text{reg}}$, note that the tangent spaces $T_{(g,\eSreg)}(G\times S_{\text{reg}})$ and $T_{(gZ,\eSreg)}(G/Z\times S_{\text{reg}})$ both naturally identify with $T_gG\oplus T_{\eSreg} S_{\text{reg}}=T_gG\oplus\mathrm{ker}(\adj_{\eta})$. Once these identifications have been made, the differential of $\pi_{Z}$ at $(g,\eSreg)$ becomes the identity on $T_gG\oplus\mathrm{ker}(\adj_{\eta})$. Note also that $\Omega_{\text{reg}}$ and $\iota^*(\Omega)$ give bilinear forms on $T_{(gZ,\eSreg)}(G/Z\times S_{\text{reg}})$ and $T_{(g,\eSreg)}(G\times S_{\text{reg}})$, respectively, which shall be regarded as bilinear forms on $T_gG\oplus\mathrm{ker}(\adj_{\eta})$. Since $\pi_{Z}^*(\Omega_{\text{reg}})=\iota^*(\Omega)$, these last two sentences imply that
\begin{equation}\label{Equation: Coincident forms}(\Omega_{\text{reg}})_{(gZ,\eSreg)}=\iota^*(\Omega)_{(g,\eSreg)} \text{ as bilinear forms on } T_gG\oplus\mathrm{ker}(\adj_{\eta}).\end{equation}   

\subsection{The integrable system on {\rm $G/Z\times S_{\text{reg}}$}}\label{Subsection: The integrable system on G/Z x S}
We now introduce a completely integrable system on $G/Z\times S_{\text{reg}}$, one whose construction will proceed along the lines of \cite[Section 4.2]{CrooksRayan}. To this end, recall the notation and conventions adopted in Subsections \ref{Subsection: Mishchenko-Fomenko subalgebras} and \ref{Subsection: Certain B-coadjoint orbits}. We may set $a=\zeta$ in Theorem \ref{Theorem: Main MF} to obtain the Poisson-commuting functions $f_1^{\zeta},\ldots,f_{\ell}^{\zeta}\in\mathbb{C}[\mathfrak{g}]$. Let us pull these $\ell$ functions back to $G/Z\times S_{\text{reg}}$ along the moment map $\mu$ from \eqref{Equation: The ideal moment map}, thereby defining the following functions in $\mathbb{C}[G/Z\times S_{\text{reg}}]$:
\begin{align}\label{eq: definition of tau}
\tau_i:=\mu^*(f_i^{\zeta}),\quad i\in\{1,\ldots,\ell\}.
\end{align}

\begin{theorem}\label{Theorem: Other system}
The functions $\tau_1,\ldots,\tau_{\ell}$ form a completely integrable system on $G/Z\times S_{\emph{reg}}$.
\end{theorem}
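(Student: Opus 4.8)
The plan is to prove that $\tau_1,\ldots,\tau_{\ell}$ form a completely integrable system on the $2\ell$-dimensional symplectic variety $G/Z\times S_{\text{reg}}$ by verifying the two defining conditions: pairwise Poisson-commutativity and functional independence. Since $G/Z\times S_{\text{reg}}$ is symplectic with dimension $2\dim(S_{\text{reg}}) + 2\dim(G/Z)$—and one checks $\dim(S_{\text{reg}}) = r$ while $\dim(G/Z) = \dim(G) = \dim(G/B) + \dim(B)$—a short count confirms the dimension is $2\ell$, so exactly $\ell$ functions are required; we have precisely $\ell$ of them.

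For \emph{Poisson-commutativity}, I would invoke the key fact recorded in Subsection \ref{Subsection: Symplectic varieties}: since $\mu:G/Z\times S_{\text{reg}}\rightarrow\mathfrak{g}$ is a moment map for the Hamiltonian $G$-action, the pullback $\mu^*:\mathbb{C}[\mathfrak{g}]\rightarrow\mathcal{O}(G/Z\times S_{\text{reg}})$ is a morphism of Poisson algebras (using the Killing isomorphism \eqref{Equation: Killing isomorphism} to identify $\mathfrak{g}\cong\mathfrak{g}^*$). By Theorem \ref{Theorem: Main MF} with $a=\zeta$, the Mishchenko--Fomenko polynomials $f_1^{\zeta},\ldots,f_{\ell}^{\zeta}$ Poisson-commute in pairs in $\mathbb{C}[\mathfrak{g}]$. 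Applying the Poisson morphism $\mu^*$ then yields
\begin{equation*}
\{\tau_i,\tau_j\} = \{\mu^*(f_i^{\zeta}),\mu^*(f_j^{\zeta})\} = \mu^*\big(\{f_i^{\zeta},f_j^{\zeta}\}\big) = 0
\end{equation*}
for all $i,j$, so this half is essentially immediate from the earlier machinery.

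The main obstacle is \emph{functional independence}, i.e. showing $d\tau_1\wedge\cdots\wedge d\tau_{\ell}$ is nonzero on a dense open subset. The approach is to exploit that $f_1^{\zeta},\ldots,f_{\ell}^{\zeta}$ are already algebraically independent in $\mathbb{C}[\mathfrak{g}]$ (Theorem \ref{Theorem: Main MF}, since $\zeta$ is regular nilpotent, hence in $\mathfrak{g}_{\text{reg}}$), so their differentials $df_1^{\zeta},\ldots,df_{\ell}^{\zeta}$ are linearly independent at points of a dense open $U\subseteq\mathfrak{g}$. It then suffices to find a point $(gZ,\eSreg)$ where the differential $d\mu$ surjects onto enough of $T_{\mu(gZ,\eSreg)}\mathfrak{g}$ to pull these $\ell$ independent covectors back to $\ell$ independent covectors. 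Since $\mu$ is built from the adjoint action and $S_{\text{reg}}$ parametrizes regular adjoint orbits via \eqref{Equation: Parametrization of regular orbits}, one expects $\mu$ to be dominant (indeed its image is $\mathfrak{g}_{\text{reg}}$), so $d_{(gZ,\eSreg)}\mu$ is surjective on a dense open set; at such a point $\mu^*$ pulls linearly independent covectors back to linearly independent covectors, giving functional independence of the $\tau_i$.

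Concretely, I would either cite \cite[Theorem 17]{CrooksRayan} directly—as the excerpt indicates this system is essentially the one treated there—or, for a self-contained argument, combine surjectivity of $d\mu$ on a dense open locus with the algebraic independence of the $f_i^{\zeta}$ to transfer independence across $\mu^*$. The subtle point requiring care is ensuring the open locus where $d\mu$ is surjective meets the preimage $\mu^{-1}(U)$ of the locus where the $df_i^{\zeta}$ are independent; since both are dense open and $\mu$ is dominant, $\mu^{-1}(U)$ is dense open, so a generic point lies in the intersection and the wedge $d\tau_1\wedge\cdots\wedge d\tau_{\ell}$ is nonvanishing there. This completes the verification of both conditions and establishes that $\tau_1,\ldots,\tau_{\ell}$ form a completely integrable system.
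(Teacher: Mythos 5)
Your proposal is correct and takes essentially the same route as the paper, whose proof simply defers to \cite[Theorem 17]{CrooksRayan} — the very argument you reconstruct (Poisson-commutativity because $\mu^*$ is a morphism of Poisson algebras applied to the pairwise-commuting Mishchenko--Fomenko polynomials, and functional independence because the $f_i^{\zeta}$ are algebraically independent for the regular element $\zeta$ while $\mu$ is dominant with image $\mathfrak{g}_{\text{reg}}$, hence submersive on a dense open set, so pullback preserves independence of the differentials generically). One cosmetic slip worth fixing: $\dim(G/Z\times S_{\text{reg}})=\dim(G/Z)+\dim(S_{\text{reg}})=\dim(G)+r=2\ell$, not $2\dim(S_{\text{reg}})+2\dim(G/Z)$, though the conclusion $2\ell$ that you actually use is right.
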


\begin{proof}
Our argument is entirely analogous to that given in the proof of \cite[Theorem 17]{CrooksRayan}. One need only take the latter argument and make the following replacements: $G\times S_{\text{reg}}$ with $G/Z\times S_{\text{reg}}$, the moment map $G\times S_{\text{reg}}\rightarrow\mathfrak{g}$, $(g,\eSreg)\mapsto -\Adj_{g^{-1}}(\eSreg)$ with our moment map $\mu$, and the $\ell$ algebraically independent polyomials in $\mathbb{C}[\mathfrak{g}]$ with $f_1^{\zeta},\ldots,f_{\ell}^{\zeta}$.
\end{proof}

\section{The Toda lattice and $G/Z\times S_{\textnormal{reg}}$}\label{Section: The Toda lattice and G/Z x S}
Our discussion of $\mathcal{O}_{\text{Toda}}$ has been entirely divorced from $G/Z\times S_{\text{reg}}$. At the same time, consider the following context: $\mathcal{O}_{\text{Toda}}$ has the symplectic form $\omega_{\text{Toda}}$ and the Toda lattice from Theorem \ref{Theorem: Toda system}, while $G/Z\times S_{\text{reg}}$ carries the symplectic form $\Omega_{\text{reg}}$ and the completely integrable system from Theorem \ref{Theorem: Other system}. This section relates $\mathcal{O}_{\text{Toda}}$ and $G/Z\times S_{\text{reg}}$ through an embedding of completely integrable systems $\mathcal{O}_{\text{Toda}}\hookrightarrow G/Z\times S_{\text{reg}}$, in the sense of Definition \ref{Definition: Embedding of completely integrable systems}.  

\subsection{The $B$-stabilizers of points in $S_{\textnormal{reg}}$}\label{Subsection: B-stabilizers}
We will need the following sequence of technical results, which culminate in Proposition \ref{Lemma: B-stabilizer}. 

\begin{lemma}\label{Lemma: Kernel of the nilnegative element}
We have the inclusions $\mathrm{ker}(\adj_{\xi})\subseteq\mathfrak{u}_{-}$ and $\mathrm{ker}(\adj_{\eta})\subseteq\mathfrak{u}.$
\end{lemma}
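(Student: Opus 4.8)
The plan is to exploit the grading of $\mathfrak{g}$ by $\adj_h$-eigenvalues together with elementary $\mathfrak{sl}_2$-representation theory. First I would record that for a root $\beta=\sum_{\alpha\in\Pi}n_\alpha^\beta\alpha$ one has $\beta(h)=\sum_{\alpha}n_\alpha^\beta\,\alpha(h)=-2\,\mathrm{ht}(\beta)$, since $\alpha(h)=-2$ for every $\alpha\in\Pi$. Hence $\adj_h$ acts on $\mathfrak{g}_\beta$ by the scalar $-2\,\mathrm{ht}(\beta)$, so that the $\adj_h$-eigenspace decomposition of $\mathfrak{g}$ agrees, up to the factor $-2$, with the height grading $\mathfrak{g}=\bigoplus_n\mathfrak{g}_{(n)}$. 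In particular the zero-eigenspace of $\adj_h$ is exactly $\mathfrak{g}_{(0)}=\mathfrak{t}$ (no root has height $0$), the strictly positive eigenspaces assemble to $\mathfrak{u}_-$ (roots of negative height), and the strictly negative eigenspaces assemble to $\mathfrak{u}$ (roots of positive height). This reduces the two desired inclusions to the assertions that $\ker(\adj_\xi)$ lies in the strictly positive part and $\ker(\adj_\eta)$ in the strictly negative part of the $\adj_h$-grading.

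Next I would view $\mathfrak{g}$ as a finite-dimensional module over the copy of $\mathfrak{sl}_2$ spanned by the triple $(\xi,h,\eta)$, in which $\xi$ acts as the raising operator (weight $+2$) and $\eta$ as the lowering operator (weight $-2$). A Jacobi-identity computation, using $[h,\xi]=2\xi$ and $[h,\eta]=-2\eta$, shows that $\adj_\xi$ raises $\adj_h$-weight by $2$ and $\adj_\eta$ lowers it by $2$; consequently both $\ker(\adj_\xi)$ and $\ker(\adj_\eta)$ are $\adj_h$-graded, and it suffices to treat homogeneous elements. By the standard structure theory of finite-dimensional $\mathfrak{sl}_2$-modules, every element of $\ker(\adj_\xi)$ is a sum of highest-weight vectors and therefore has $\adj_h$-weight $\geq 0$; dually, every element of $\ker(\adj_\eta)$ has $\adj_h$-weight $\leq 0$.

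It then remains to exclude the weight-zero contribution, which is the one genuinely computational point. Since the weight-zero space of $\adj_h$ is $\mathfrak{t}$, I must show $\ker(\adj_\xi)\cap\mathfrak{t}=\{0\}$ and $\ker(\adj_\eta)\cap\mathfrak{t}=\{0\}$. For $t\in\mathfrak{t}$ one computes $[\xi,t]=\sum_{\alpha\in\Pi}\alpha(t)\,e_{-\alpha}$ and $[\eta,t]=\sum_{\alpha\in\Pi}c_\alpha\,\alpha(t)\,e_\alpha$; since the $e_{-\alpha}$ (resp.\ the $e_\alpha$) are linearly independent, the simple roots $\Pi$ form a basis of $\mathfrak{t}^*$, and the $c_\alpha$ are nonzero (as $\eta$ is regular), either bracket vanishes only when $t=0$. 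Thus the weights occurring in $\ker(\adj_\xi)$ are strictly positive and those in $\ker(\adj_\eta)$ strictly negative, yielding $\ker(\adj_\xi)\subseteq\mathfrak{u}_-$ and $\ker(\adj_\eta)\subseteq\mathfrak{u}$. I expect the main obstacle to be precisely this last step, namely verifying that no weight-zero (Cartan) directions survive in the centralizers, rather than the representation-theoretic reduction, which is routine.
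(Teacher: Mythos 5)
Your proof is correct and takes essentially the same route as the paper's: both arguments use the $\mathfrak{sl}_2$-representation theory of $(\xi,h,\eta)$ acting via $\adj$ to bound the $\adj_h$-weights of the kernels, exclude the weight-zero (Cartan) contribution by the explicit bracket computation relying on $c_\alpha\neq 0$, and use the identity $\beta(h)=-2\,\mathrm{ht}(\beta)$ to identify the surviving weight spaces with $\mathfrak{u}_-$ and $\mathfrak{u}$ respectively. The only cosmetic difference is that you treat the two inclusions symmetrically, whereas the paper proves the $\eta$ case in detail and declares the $\xi$ case analogous.
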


\begin{proof}
We will only verify the second inclusion, as the first can be established analogously. Let us restrict the adjoint representation to the subalgebra $\mathfrak{a}:=\text{span}\{\xi,h,\eta\}\subseteq\mathfrak{g}$, so that $\mathfrak{g}$ is an $\mathfrak{a}\cong\mathfrak{sl}_2$-representation. By appealing to the representation theory of $\mathfrak{sl}_2$, one can draw several immediate conclusions. A first is that $\mathfrak{g}$ must decompose into irreducible $\mathfrak{sl}_2$-subrepresentations, each acted upon by $h$ semisimply, with integral eigenvalues, and with a minimal eigenvalue lying in $\mathbb{Z}_{\leq 0}$. A second conclusion is that $\text{ker}(\adj_{\eta})$ is a sum of the eigenspaces for these minimal eigenvalues, one for each irreducible subrepresentation. It follows that
	\begin{equation}\label{Equation: First decomposition}
	\ker(\adj_{\eta})=\bigoplus_{k\in\mathbb{Z}_{\leq 0}}(\ker(\adj_{\eta})\cap\mathfrak{g}_k)
	\end{equation}
	where $\mathfrak{g}_k:=\{x\in\mathfrak{g}:[h,x]=kx\}$.
	To refine this decomposition, note that if $x\in\text{ker}(\adj_{\eta})\cap\mathfrak{g}_0$, then $[\eta,x]=0=[h,x]$. Since $\alpha(h)=-2$ for all $\alpha\in\Pi$, $h$ is regular and the identity $[h,x]=0$ then implies that $x\in\mathfrak{t}$. We may therefore write 
	$$0=[x,\eta]=-\sum_{\alpha\in\Pi}c_{\alpha}\alpha(x)e_{\alpha}.$$ Now note that each coefficient $c_{\alpha}$ is non-zero, a consequence of $\eta$ being regular (see \cite[Theorem 5.3]{Kostant}). It follows that $\alpha(x)=0$ for all $\alpha\in\Pi$, so that $x=0$. 
	
	The preceding argument establishes that $\text{ker}(\adj_{\eta})\cap\mathfrak{g}_0=\{0\}$.
	%The preceding argument establishes that $\text{ker}(\adj_{\eta})\cap\mathfrak{g}_0=\{0\}$, allowing us to re-write \eqref{Equation: First decomposition} as
	%\begin{equation*}%\label{Equation: Second decomposition}
	%\ker(\adj_{\eta})=\bigoplus_{k\in\mathbb{Z}_{< 0}}(\ker(\adj_{\eta})\cap\mathfrak{g}_k).
	%\end{equation*}
	Accordingly, it will suffice to prove that $\mathfrak{g}_k\subseteq\mathfrak{u}$ for all $k\in\mathbb{Z}_{<0}$. Given such a $k$, let $x\in\mathfrak{g}_k$ and write \begin{equation*}%\label{Equation: Root space components}
	x=x_{(0)}+\sum_{\alpha\in\Delta}x_{\alpha}
	\end{equation*}
	with $x_{(0)}\in\mathfrak{t}$ and $x_{\alpha}\in\mathfrak{g}_{\alpha}$ for all $\alpha\in\Delta$. With this notation, the identity $kx=[h,x]$ becomes
	\begin{equation*}
	kx_{(0)}+\sum_{\alpha\in\Delta}kx_{\alpha}=\sum_{\alpha\in\Delta}\alpha(h)x_{\alpha}.
	\end{equation*}
	We conclude that $x_{(0)}=0$ and $kx_{\alpha}=\alpha(h)x_{\alpha}$ for all $\alpha\in\Delta$. It remains only to prove that $x_{\alpha}=0$ for all $\alpha\in\Delta_{-}$. For such $\alpha$, however, one can write $\alpha=\sum_{\beta\in\Pi}n_{\beta}\beta$ with all $n_{\beta}\in\mathbb{Z}_{\leq 0}$ and at least one $n_{\beta}$ non-zero. Remembering the definition of $h$, we see that $$\alpha(h)=-\sum_{\beta\in\Pi}2n_{\beta}>0.$$ Since $k<0$, the identity $kx_{\alpha}=\alpha(h)x_{\alpha}$ forces $x_{\alpha}=0$ for all $\alpha\in\Delta_{-}$. This completes the proof.
\end{proof}

\begin{lemma}\label{Lemma: Technical}
We have $U\cap Z_G(\eSreg)=\{e\}$ for all $\eSreg\in S_{\emph{reg}}$.  
\end{lemma}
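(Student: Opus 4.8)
The plan is to pass from the group to the Lie algebra and exploit the height grading $\mathfrak{g}=\bigoplus_{n}\mathfrak{g}_{(n)}$ from Subsection \ref{Subsection: Basic Lie theory}. Fix $p=\xi+z\in S_{\text{reg}}$ with $z\in\mathrm{ker}(\adj_{\eta})$. By Lemma \ref{Lemma: Kernel of the nilnegative element} we have $z\in\mathfrak{u}=\bigoplus_{n\geq 1}\mathfrak{g}_{(n)}$, whereas $\xi=\sum_{\alpha\in\Pi}e_{-\alpha}$ lies in $\mathfrak{g}_{(-1)}$; thus the lowest-height component of $p$ is exactly $\xi$, sitting in height $-1$. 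Since $U$ is unipotent, any $u\in U$ can be written uniquely as $u=\exp(w)$ with $w\in\mathfrak{u}$, and $u\in Z_G(p)$ translates into the Lie-algebraic identity
$$\Adj_u(p)=e^{\adj_w}(p)=p+\sum_{k\geq 1}\frac{1}{k!}(\adj_w)^k(p)=p.$$
So I would suppose $w\neq 0$ and derive a contradiction.

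Write $w=\sum_{m\geq 1}w_{(m)}$ and let $m_0\geq 1$ be minimal with $w_{(m_0)}\neq 0$. The heart of the argument is to isolate the lowest-height term of $\sum_{k\geq 1}\frac{1}{k!}(\adj_w)^k(p)$. Because $\adj_w$ raises height by at least $m_0$, the summand $(\adj_w)^k(\xi)$ lives in heights $\geq -1+km_0$, which for $k\geq 2$ is at least $2m_0-1>m_0-1$; likewise $(\adj_w)^k(z)$ lives in heights $\geq 1+km_0>m_0-1$ for every $k\geq 1$. Hence the unique contribution in height $m_0-1$ comes from the $k=1$ term paired against $\xi$, and it equals $[w_{(m_0)},\xi]$. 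The displayed identity therefore forces its leading component to vanish: $[w_{(m_0)},\xi]=0$.

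It then remains only to conclude, which is immediate: $[w_{(m_0)},\xi]=0$ says $w_{(m_0)}\in\mathrm{ker}(\adj_{\xi})$, and Lemma \ref{Lemma: Kernel of the nilnegative element} gives $\mathrm{ker}(\adj_{\xi})\subseteq\mathfrak{u}_{-}$. But $w_{(m_0)}\in\mathfrak{g}_{(m_0)}\subseteq\mathfrak{u}$ since $m_0\geq 1$, so $w_{(m_0)}\in\mathfrak{u}\cap\mathfrak{u}_{-}=\{0\}$, contradicting $w_{(m_0)}\neq 0$. Thus $w=0$ and $u=e$, proving $U\cap Z_G(p)=\{e\}$.

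I expect the only delicate point to be the height bookkeeping in the second paragraph: one must confirm that neither the higher-order terms $(\adj_w)^k(p)$ with $k\geq 2$ nor the brackets of $w$ against the positive-height part $z$ of $p$ can contaminate height $m_0-1$. This is a clean consequence of $m_0\geq 1$ together with the fact that $\xi$ is the unique lowest-height constituent of $p$, but it is where care is required; everything else reduces to a direct appeal to Lemma \ref{Lemma: Kernel of the nilnegative element} and the decomposition $\mathfrak{g}=\mathfrak{u}_{-}\oplus\mathfrak{t}\oplus\mathfrak{u}$.
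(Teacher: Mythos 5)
Your proof is correct, but it takes a genuinely different route from the paper's. The paper deduces the lemma in two lines from Kostant's theorem that $\vartheta\colon U\times S_{\text{reg}}\rightarrow \xi+\mathfrak{b}$, $(u,p)\mapsto\Adj_u(p)$, is an isomorphism of varieties (\cite[Theorem 1.2]{KostantWhittaker}): if $u\in U\cap Z_G(p)$ then $\vartheta(u,p)=p=\vartheta(e,p)$, and injectivity of $\vartheta$ gives $u=e$. You instead argue from scratch: writing $u=\exp(w)$ with $w\in\mathfrak{u}$, the stabilizer condition becomes $\sum_{k\geq 1}\frac{1}{k!}(\adj_w)^k(p)=0$, and since the height grading is a Lie algebra grading, your bookkeeping correctly isolates the lowest-height component --- the terms $(\adj_w)^k(\xi)$ with $k\geq 2$ land in heights $\geq 2m_0-1>m_0-1$ and the terms $(\adj_w)^k(z)$ in heights $\geq m_0+1$, so the height-$(m_0-1)$ component is exactly $[w_{(m_0)},\xi]$, which must vanish. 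The conclusion then follows from the inclusion $\mathrm{ker}(\adj_{\xi})\subseteq\mathfrak{u}_{-}$ of Lemma \ref{Lemma: Kernel of the nilnegative element} (note this is the half of that lemma the paper states but leaves to the reader as ``analogous'') together with $\mathfrak{u}\cap\mathfrak{u}_{-}=\{0\}$. As for what each approach buys: the paper's argument is shorter and leans on a citation it exploits elsewhere anyway (the same isomorphism gives $U\cdot S_{\text{reg}}=\xi+\mathfrak{b}$ in the proof of Proposition \ref{Proposition: Open immersion}), while yours is elementary and self-contained, in effect reproving the freeness portion of Kostant's theorem by the standard leading-term argument; it also makes transparent exactly which structural facts (the grading and $\mathrm{ker}(\adj_{\xi})\subseteq\mathfrak{u}_{-}$) drive the result.
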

 
\begin{proof}
In \cite{KostantWhittaker}, Kostant proves that \begin{equation}\label{Equation: KostantWhittakerIso}\vartheta: U\times S_{\text{reg}}\rightarrow \xi+\mathfrak{b},\quad (u,\eSreg)\mapsto\Adj_u(\eSreg)\end{equation} is an isomorphism of algebraic varieties (see \cite[Theorem 1.2]{KostantWhittaker}, cf. \cite[Theorem 7.5]{GinzburgInvent}). Now fix $\eSreg\in S_{\text{reg}}$ and suppose that $u\in U\cap Z_G(\eSreg)$. It follows that $\vartheta(u,\eSreg)=\eSreg=\vartheta(e,\eSreg)$, which together with the injectivity of $\vartheta$ implies that $u=e$. 
\end{proof}

\begin{proposition}\label{Lemma: B-stabilizer}
We have $B\cap Z_G(\eSreg)=Z$ for all $\eSreg\in S_{\emph{reg}}$, where $Z$ is the centre of $G$.
\end{proposition}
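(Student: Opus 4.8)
The plan is to show the inclusion $B\cap Z_G(\eSreg)\subseteq Z$, since the reverse inclusion is immediate (elements of the centre commute with everything, in particular with $\eSreg$, and $Z\subseteq B$ because $Z$ lies in every maximal torus and hence in every Borel). So fix $\eSreg\in S_{\text{reg}}$ and take an arbitrary $b\in B\cap Z_G(\eSreg)$; the goal is to prove $b\in Z$. First I would exploit the semidirect product decomposition $B=U\rtimes T$ to write $b=ut$ with $u\in U$ and $t\in T$, and then try to force $u=e$ so that $b\in T$ centralizes a regular element, which should pin it down inside $Z$.

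The key mechanism is Lemma \ref{Lemma: Technical}, which tells us that $U\cap Z_G(\eSreg)=\{e\}$. So the heart of the argument is to separate the unipotent and toral parts of the centralizing condition. Writing $\eSreg=\xi+w$ with $w\in\ker(\adj_\eta)\subseteq\mathfrak{u}$ (using Lemma \ref{Lemma: Kernel of the nilnegative element}), the condition $\Adj_b(\eSreg)=\eSreg$ becomes $\Adj_{ut}(\xi+w)=\xi+w$. The plan is to analyze this equation by grading. Recall $\xi=\sum_{\alpha\in\Pi}e_{-\alpha}\in\mathfrak{g}_{(-1)}$ is the lowest-height component, while $w\in\mathfrak{u}$ has strictly positive height. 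Since $T$ preserves the root-space grading and $U$ acts by raising height, examining the $\mathfrak{g}_{(-1)}$-component (the component of minimal height) of $\Adj_{ut}(\eSreg)$ should isolate $\Adj_t(\xi)$, because $u\in U$ can only contribute terms of height $\geq 0$ when applied to $\xi$, and applied to $w\in\mathfrak{u}$ it stays in $\mathfrak{u}$. Matching the $\mathfrak{g}_{(-1)}$-parts then yields $\Adj_t(\xi)=\xi$, i.e. $\alpha(t)^{-1}=1$, hence $\alpha(t)=1$, for every simple root $\alpha\in\Pi$. Since $\Pi$ is a basis of $\mathfrak{t}^*$, this means $t$ acts trivially under all roots, so $t\in Z$.

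Once $t\in Z$ is known, $t$ is central and therefore $\Adj_t$ is the identity, so the centralizing condition collapses to $\Adj_u(\eSreg)=\eSreg$, i.e. $u\in U\cap Z_G(\eSreg)$. Lemma \ref{Lemma: Technical} then forces $u=e$, giving $b=t\in Z$ as desired. I expect the main obstacle to be making the grading argument fully rigorous: one must check carefully that the minimal-height component of $\Adj_{ut}(\xi+w)$ really is $\Adj_t(\xi)$, which requires verifying that neither the higher terms of $\Adj_u(\xi)=\xi+[\log u,\xi]+\cdots$ nor any contribution from $w$ can land in $\mathfrak{g}_{(-1)}$. The cleanest route is probably to observe that $U$ preserves $\mathfrak{b}$ and raises height, so $\Adj_u$ maps $\mathfrak{g}_{(-1)}$-free-of-$\xi$ perturbations into $\bigoplus_{n\geq 0}\mathfrak{g}_{(n)}$, isolating the claim. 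An alternative, possibly slicker, approach would be to bypass the grading entirely by arguing directly that $b$ normalizes $T$ and permutes root data trivially; but the height-filtration argument above is the most self-contained given the tools already developed in the excerpt.
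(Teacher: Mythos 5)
Your proposal is correct and takes essentially the same approach as the paper: decompose $b=ut$ with $u\in U$, $t\in T$, force $\alpha(t)=1$ for all $\alpha\in\Pi$ by comparing the $\bigoplus_{\alpha\in\Pi}\mathfrak{g}_{-\alpha}$-components of $\Adj_b(\eSreg)$ and $\eSreg$, and then invoke Lemma \ref{Lemma: Technical} to get $u=e$. Your height-grading step is precisely the paper's observation that $(\exp(\adj_y)-\mathrm{Id}_{\mathfrak{g}})\big(\Adj_t(\xi)\big)$ and $\Adj_b(\eSreg-\xi)-(\eSreg-\xi)$ lie in $\mathfrak{b}$ (via Lemma \ref{Lemma: Kernel of the nilnegative element}), so the two write-ups differ only in presentation.
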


\begin{proof}
Since $Z$ is the kernel of $\Adj$ and belongs to $B$, the inclusion $Z\subseteq B\cap Z_G(\eSreg)$ is immediate. To establish the opposite inclusion, suppose that $b\in B$ satisfies $\eSreg=\Adj_b(\eSreg)$.
Writing $b=ut$ with $u\in U$ and $t\in T$, it will suffice to prove that $u=e$ and $t\in Z$.  

Let us write $u=\exp(y)$ for some $y\in\mathfrak{u}$. Recalling that $\xi=\sum_{\alpha\in\Pi}e_{-\alpha}$ from \eqref{Equation: Nilpositive and Nilnegative}, it follows that
 \begin{align*}
 \eSreg = \Adj_b(\eSreg) & = \Adj_b(\xi+(\eSreg-\xi))\\ 
 &=\Adj_{\exp(y)}\left(\sum_{\alpha\in\Pi}\alpha(t)^{-1}e_{-\alpha}\right)+\Adj_b(\eSreg-\xi)\\
 & = \left(\sum_{\alpha\in\Pi}\alpha(t)^{-1}e_{-\alpha}\right)+(\exp(\adj_y)-\mathrm{Id}_{\mathfrak{g}})\left(\sum_{\alpha\in\Pi}\alpha(t)^{-1}e_{-\alpha}\right)+\Adj_{b}(\eSreg-\xi),
 \end{align*}
where $\mathrm{Id}_{\mathfrak{g}}:\mathfrak{g}\rightarrow\mathfrak{g}$ is the identity map. More compactly, 
\begin{equation*}%\label{Equation: Collapsing the string of equalities} 
\eSreg=\left(\sum_{\alpha\in\Pi}\alpha(t)^{-1}e_{-\alpha}\right)+(\exp(\adj_y)-\mathrm{Id}_{\mathfrak{g}})\left(\sum_{\alpha\in\Pi}\alpha(t)^{-1}e_{-\alpha}\right)+\Adj_{b}(\eSreg-\xi).\end{equation*}
Upon writing the left hand side as $\xi+(\eSreg-\xi)$ and rearranging, one obtains
\begin{equation}\label{Equation: Second collapsing the string of equalities} \xi-\left(\sum_{\alpha\in\Pi}\alpha(t)^{-1}e_{-\alpha}\right)=(\exp(\adj_y)-\mathrm{Id}_{\mathfrak{g}})\left(\sum_{\alpha\in\Pi}\alpha(t)^{-1}e_{-\alpha}\right)+\Adj_{b}(\eSreg-\xi)-(\eSreg-\xi).\end{equation}
  
In the interest of using \eqref{Equation: Second collapsing the string of equalities}, we now make a few observations. The first is that $(\adj_y)^k(e_{-\alpha})\in\mathfrak{b}$ for all $\alpha\in\Pi$ and $k\geq 1$, a consequence of $y$ belonging to $\mathfrak{u}$. One can use this to establish that
\begin{equation*}
(\exp(\adj_y)-\mathrm{Id}_{\mathfrak{g}})\left(\sum_{\alpha\in\Pi}\alpha(t)^{-1}e_{-\alpha}\right)\in\mathfrak{b}.
\end{equation*}
One also has $\eSreg-\xi\in\mathrm{ker}(\adj_{\eta})\subseteq\mathfrak{u}\subseteq\mathfrak{b}$ (see Lemma \ref{Lemma: Kernel of the nilnegative element}), so that $\eSreg-\xi\in\mathfrak{b}$ and $\Adj_b(\eSreg-\xi)\in\mathfrak{b}$.
It now follows that the right hand side of \eqref{Equation: Second collapsing the string of equalities} belongs to $\mathfrak{b}$, or equivalently
$$\xi-\left(\sum_{\alpha\in\Pi}\alpha(t)^{-1}e_{-\alpha}\right)\in\mathfrak{b}.$$ Since we have $\xi=\sum_{\alpha\in\Pi}e_{-\alpha}$, this shows that $\alpha(t)=1$ for all $\alpha\in\Pi$, i.e. $t\in Z$.  
We thus have $$\eSreg=\Adj_{b}(\eSreg)=\Adj_{ut}(\eSreg)=\Adj_u(\eSreg),$$
which by Lemma \ref{Lemma: Technical} implies that $u=e$. We conclude that $b=ut\in Z$, completing the proof.
\end{proof}

\subsection{Three preliminary morphisms}
\label{Subsection: Three preliminary morphisms}
Our construction of the embedding $\mathcal{O}_{\text{Toda}}\hookrightarrow G/Z\times S_{\text{reg}}$ makes essential use of three morphisms,  
\begin{align*}
\theta:\mathcal{O}_{\text{Toda}}\rightarrow T/Z, \ \  
\gamma:\xi+\mathfrak{t}\rightarrow U, \ \ \text{and} \ \ \nu\colon\mathcal{O}_{\text{Toda}}\rightarrow B/Z,
\end{align*}
which we now study.

\subsubsection{{\rm \textbf{The morphism} $\theta:\mathcal{O}_{\text{Toda}}\rightarrow T/Z$}}\label{Subsection: The morphism theta}
%We first construct a morphism $\theta:\mathcal{O}\rightarrow T/Z$.
Given $\eToda\in\mathcal{O}_{\text{Toda}}$, \eqref{eq: def of toda lattice} allows one to write \begin{equation*}%\label{Equation: Presentation of x}
\eToda=\eToda_{(0)}+\sum_{\alpha\in\Pi}\eToda_{-\alpha}e_{-\alpha}\end{equation*} with $\eToda_{(0)}\in\mathfrak{t}$ and $\eToda_{-\alpha}\in\mathbb{C}^{\times}$ for all $\alpha\in\Pi$. We may then consider the morphism \begin{equation*}%\label{Equation: First morphism}
\theta_1:\mathcal{O}_{\text{Toda}}\rightarrow(\mathbb{C}^{\times})^{\Pi},\quad \eToda\mapsto (\eToda_{-\alpha})_{\alpha\in\Pi}.\end{equation*} At the same time, note that \begin{equation*}%\label{Equation: Second morphism} 
\theta_2:T/Z\rightarrow(\mathbb{C}^{\times})^{\Pi},\quad tZ\mapsto (\alpha(t))_{\alpha\in\Pi}, \quad t\in T\end{equation*} defines an isomorphism of algebraic groups. Let $\theta:\mathcal{O}_{\text{Toda}}\rightarrow T/Z$ denote the morphism obtained by composing $\theta_1$ with the inverse of $\theta_2$, i.e. \begin{equation*}\theta:=\left(\mathcal{O}_{\text{Toda}}\xrightarrow{\theta_1}(\mathbb{C}^{\times})^{\Pi}\xrightarrow{(\theta_2)^{-1}}T/Z\right).\end{equation*} 
Namely, $\theta(\eToda)$ is the unique element of $T/Z$ satisfying
\begin{equation}\label{Equation: Root identity}\alpha(\theta(\eToda))=\eToda_{-\alpha}\end{equation}
for all $\alpha\in\Pi$.

\begin{lemma}\label{lem: Useful identity}
For $\eToda\in \mathcal{O}_{\emph{Toda}}$, we have $\Adj_{\theta(\eToda)}(\eToda)=\xi+\eToda_{(0)}$.
%If $x\in\mathcal{O}$, then $\Adj_{\theta(x)}(x)=\xi+x_{(0)}$.
%\begin{equation*} \Adj_{\theta(x)}(x)=\xi+x_{(0)}.\end{equation*}
\end{lemma}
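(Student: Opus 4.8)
The plan is to evaluate $\Adj_{\theta(\eToda)}$ directly on the root-space decomposition of $\eToda$. First I would observe that the adjoint representation $\Adj\colon G\rightarrow\GL(\mathfrak{g})$ has the centre $Z$ in its kernel, so that $\Adj_{\theta(\eToda)}$ is a well-defined endomorphism of $\mathfrak{g}$: one chooses any lift $t\in T$ of the class $\theta(\eToda)\in T/Z$, and the defining property \eqref{Equation: Root identity} of $\theta$ guarantees that $\alpha(t)=\eToda_{-\alpha}$ for every $\alpha\in\Pi$, independently of the chosen lift.

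Next I would write $\eToda$ in the form $\eToda=\eToda_{(0)}+\sum_{\alpha\in\Pi}\eToda_{-\alpha}e_{-\alpha}$ with $\eToda_{(0)}\in\mathfrak{t}$ and $\eToda_{-\alpha}\in\mathbb{C}^{\times}$, as permitted by \eqref{eq: def of toda lattice}, and apply $\Adj_t$ term by term. Since $T$ acts trivially on its own Lie algebra $\mathfrak{t}$, the summand $\eToda_{(0)}$ is fixed. Since $e_{-\alpha}$ spans the root space $\mathfrak{g}_{-\alpha}$, on which $\Adj_t$ acts by the character $-\alpha$, one has $\Adj_t(e_{-\alpha})=\alpha(t)^{-1}e_{-\alpha}=\eToda_{-\alpha}^{-1}e_{-\alpha}$.

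Combining these computations gives
$$\Adj_t(\eToda)=\eToda_{(0)}+\sum_{\alpha\in\Pi}\eToda_{-\alpha}\cdot\eToda_{-\alpha}^{-1}e_{-\alpha}=\eToda_{(0)}+\sum_{\alpha\in\Pi}e_{-\alpha},$$
and recalling from \eqref{Equation: Nilpositive and Nilnegative} that $\xi=\sum_{\alpha\in\Pi}e_{-\alpha}$ yields $\Adj_{\theta(\eToda)}(\eToda)=\xi+\eToda_{(0)}$, as desired. There is no real obstacle here: the computation is a direct application of how a maximal torus acts on root spaces. The only point requiring any care is the bookkeeping of the character $-\alpha$, so that the nonzero scalar $\eToda_{-\alpha}$ is inverted and then cancels, together with the initial check that passing to $T/Z$ causes no ambiguity.
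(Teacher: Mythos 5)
Your proof is correct and follows essentially the same route as the paper's: decompose $\eToda=\eToda_{(0)}+\sum_{\alpha\in\Pi}\eToda_{-\alpha}e_{-\alpha}$, apply $\Adj_{\theta(\eToda)}$ term by term using the torus action on root spaces, and invoke \eqref{Equation: Root identity} so that the scalars $\eToda_{-\alpha}$ cancel, yielding $\xi+\eToda_{(0)}$. Your added remark that $Z\subseteq\ker(\Adj)$ makes $\Adj_{\theta(\eToda)}$ well defined is a point the paper leaves implicit, but it is not a substantive difference.
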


\begin{proof} 
We have
\begin{align*}\Adj_{\theta(\eToda)}(\eToda) 
 =\eToda_{(0)} + \sum_{\alpha\in\Pi}\eToda_{-\alpha}\Adj_{\theta(\eToda)}(e_{-\alpha})  
%& =\left(\sum_{\alpha\in\Pi}x_{-\alpha}\alpha(\theta(x))^{-1}e_{-\alpha}\right)+x_{(0)}\\ 
 = \eToda_{(0)} + \sum_{\alpha\in\Pi}\eToda_{-\alpha}(\eToda_{-\alpha})^{-1}e_{-\alpha}
 = \xi+\eToda_{(0)},
\end{align*}
where the second equality follows from \eqref{Equation: Root identity}.
\end{proof}

\begin{lemma}\label{lem: T-equivariance of theta}
	Suppose that $b=tu\in B$, where $u\in U$ and $t\in T$. If $\eToda\in\mathcal{O}_{\emph{Toda}}$, then we have $\theta(b*\eToda) = t^{-1}\theta(\eToda)$.
\end{lemma}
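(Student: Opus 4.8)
The plan is to reduce everything to the explicit formula \eqref{Equation: Useful formula} obtained in the proof of Lemma \ref{Lemma: Invariance}, which computes the $\ast$-action in terms of the decompositions of $b$ and the point being acted on. Writing $\eToda=\eToda_{(0)}+\sum_{\alpha\in\Pi}\eToda_{-\alpha}e_{-\alpha}$ with $\eToda_{(0)}\in\mathfrak{t}$ and each $\eToda_{-\alpha}\in\mathbb{C}^{\times}$, and writing $u=\exp(y)$ with $y\in\mathfrak{u}$ expanded as in \eqref{eq: decomposition for y}, formula \eqref{Equation: Useful formula} gives
\begin{equation*}
b\ast\eToda=\bigg(\eToda_{(0)}+\sum_{\alpha\in\Pi}y_{\alpha}\eToda_{-\alpha}h_{\alpha}\bigg)+\sum_{\alpha\in\Pi}\alpha(t)^{-1}\eToda_{-\alpha}e_{-\alpha}.
\end{equation*}
The first step is then purely a matter of reading off the $\mathfrak{g}_{-\alpha}$-component of the right-hand side: the coefficient of $e_{-\alpha}$ in $b\ast\eToda$ equals $\alpha(t)^{-1}\eToda_{-\alpha}$. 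As each $\eToda_{-\alpha}$ and each $\alpha(t)^{-1}$ is non-zero, this coefficient is non-zero for every $\alpha\in\Pi$, so $b\ast\eToda$ again lies in $\mathcal{O}_{\text{Toda}}$ and $\theta(b\ast\eToda)$ is defined.

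Next I would invoke the characterizing property \eqref{Equation: Root identity} of $\theta$ on both sides. By definition $\theta(b\ast\eToda)$ is the unique element of $T/Z$ satisfying $\alpha(\theta(b\ast\eToda))=\alpha(t)^{-1}\eToda_{-\alpha}$ for all $\alpha\in\Pi$. On the other hand, using that each simple root $\alpha$ is a character (hence multiplicative) together with $\alpha(\theta(\eToda))=\eToda_{-\alpha}$, one computes
\begin{equation*}
\alpha(t^{-1}\theta(\eToda))=\alpha(t)^{-1}\alpha(\theta(\eToda))=\alpha(t)^{-1}\eToda_{-\alpha}
\end{equation*}
for every $\alpha\in\Pi$. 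Thus $t^{-1}\theta(\eToda)$ satisfies exactly the system of simple-root identities that characterizes $\theta(b\ast\eToda)$. Since $\theta_2:T/Z\rightarrow(\mathbb{C}^{\times})^{\Pi}$, $tZ\mapsto(\alpha(t))_{\alpha\in\Pi}$, is an isomorphism of algebraic groups, an element of $T/Z$ is determined by its tuple of simple-root values, and I would conclude $\theta(b\ast\eToda)=t^{-1}\theta(\eToda)$.

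The computation is short, so the only point requiring genuine care -- and hence the closest thing to an obstacle -- is the bookkeeping between $T$ and $T/Z$. One must note that the simple roots $\alpha$ descend to well-defined characters of $T/Z$ (they are trivial on $Z$, since $\Adj$ is trivial on the centre), so that both the multiplicativity identity $\alpha(t^{-1}\theta(\eToda))=\alpha(t)^{-1}\alpha(\theta(\eToda))$ and the final uniqueness argument are legitimate at the level of the quotient. Everything else is a direct substitution into \eqref{Equation: Useful formula}.
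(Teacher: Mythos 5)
Your proposal is correct and takes essentially the same route as the paper's own proof: both substitute into the explicit action formula \eqref{Equation: Useful formula}, read off the $\mathfrak{g}_{-\alpha}$-coefficients via the characterization \eqref{Equation: Root identity}, use multiplicativity of the simple roots, and conclude because an element of $T/Z$ is determined by its simple-root values (injectivity of $\theta_2$). Your two added remarks---that $b\ast v$ stays in $\mathcal{O}_{\text{Toda}}$ so $\theta(b\ast v)$ is defined, and that simple roots are trivial on $Z$ and hence descend to characters of $T/Z$---are correct points the paper leaves implicit.
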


\begin{proof}
	Let us write $u=\exp(y)$, where $y\in\mathfrak{u}$ is in the form \eqref{eq: decomposition for y}. 
	The formula \eqref{Equation: Useful formula} for $b*\eToda$ then gives 
	\begin{align*}
	b*\eToda = \bigg(\eToda_{(0)} + \sum_{\alpha\in\Pi} y_{\alpha}\eToda_{-\alpha}h_{\alpha}\bigg) + \sum_{\alpha\in\Pi} \alpha(t)^{-1}\eToda_{-\alpha}e_{-\alpha} .
	\end{align*}
	Together with \eqref{Equation: Root identity}, this formula implies that
	\begin{align*}
	\alpha(\theta(b*\eToda)) = \alpha(t)^{-1}\eToda_{-\alpha}
	\end{align*}
	for all $\alpha\in\Pi$. 
	On the other hand,
	\begin{align*}
	\alpha(t^{-1}\theta(\eToda)) = \alpha(t^{-1})\alpha(\theta(\eToda)) = \alpha(t)^{-1}\eToda_{-\alpha}
	\end{align*}
	for all $\alpha\in\Pi$.
	It follows that $\alpha(\theta(b*\eToda))=\alpha(t^{-1}\theta(\eToda))$ for all $\alpha\in\Pi$, and this means that $\theta(b*\eToda)=t^{-1}\theta(\eToda)$ in $T/Z$.
\end{proof}

\subsubsection{\rm \textbf{The morphism $\gamma:\xi+\mathfrak{t}\rightarrow U$}}\label{Subsection: The morphism gamma}
In \cite{KostantLie}, Kostant shows that there is a unique morphism $\gamma:\xi+\mathfrak{t}\rightarrow U$ satisfing $\Adj_{\gamma(z)}(z)\in S_{\text{reg}}$ for all $z\in\xi+\mathfrak{t}$ (see \cite[Proposition 19]{KostantLie}).
%We will need to invoke a result from \cite{KostantLie}. Specifically, there is a canonical variety morphism $\gamma:\xi+\mathfrak{t}\rightarrow U$ satisfying $\Adj_{\gamma(z)}(z)\in S_{\text{reg}}$ for all $z\in\xi+\mathfrak{t}$ (see \cite[Prop. 19]{KostantLie}). 
Noting that the exponential map on $\mathfrak{g}$ restricts to a variety isomorphism $\mathrm{exp}\vert_{\mathfrak{u}}:\mathfrak{u}\rightarrow U$, we may define $\tilde{\gamma}:\xi+\mathfrak{t}\rightarrow\mathfrak{u}$ by $$\tilde{\gamma}:=(\exp\vert_{\mathfrak{u}})^{-1}\circ\gamma.$$ 

Now suppose that $\eLieT\in\mathfrak{t}$ and write 
\begin{equation}\label{Equation: Expansion of z}\eLieT=\sum_{\alpha\in\Pi}\eLieT_{\alpha}h_{\alpha},\end{equation} where $\eLieT_{\alpha}\in\mathbb{C}.$ Consider the vector in $\mathfrak{g}$ defined by 
\begin{equation}\label{Equation: Definition of m}
m_{(\geq2)} (\eLieT) \coloneqq \tilde{\gamma} \left(\xi + \eLieT \right) + \sum_{\alpha\in\Pi}\eLieT_{\alpha}e_{\alpha}.
\end{equation}
We will be interested in certain properties of $m_{(\geq2)} (\eLieT)$, one of which involves the subspace 
$$\mathfrak{g}_{(\geq2)}:=\bigoplus_{\substack{\alpha\in\Delta\\ \mathrm{ht}(\alpha)\geq2}}\mathfrak{g}_{\alpha}.$$
\begin{lemma}\label{lem: ht 1 infromation of gamma}
	We have $m_{(\geq2)}(\eLieT)\in\mathfrak{g}_{(\geq 2)}$ for all $\eLieT\in\mathfrak{t}$. In particular, $m_{(\geq2)}$ defines a variety morphism $m_{(\geq2)}:\mathfrak{t}\rightarrow{\mathfrak{g}}_{(\geq 2)}$.
\end{lemma}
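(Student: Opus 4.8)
The plan is to exploit the grading of $\mathfrak{g}$ by height, using that $\alpha(h)=-2$ for each $\alpha\in\Pi$ makes $\mathfrak{g}_{(n)}$ the $h$-eigenspace with eigenvalue $-2n$. Since $\tilde{\gamma}(\xi+\eLieT)\in\mathfrak{u}=\bigoplus_{n\geq 1}\mathfrak{g}_{(n)}$ and $\sum_{\alpha\in\Pi}\eLieT_{\alpha}e_{\alpha}\in\mathfrak{g}_{(1)}$, the element $m_{(\geq 2)}(\eLieT)$ already lies in $\bigoplus_{n\geq 1}\mathfrak{g}_{(n)}$, so the entire content of the lemma is that its height-$1$ component vanishes. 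Writing $Y:=\tilde{\gamma}(\xi+\eLieT)$, this amounts to the single identity
\begin{equation*}
Y_{(1)}=-\sum_{\alpha\in\Pi}\eLieT_{\alpha}e_{\alpha}.
\end{equation*}

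First I would extract this identity from the defining property $\Adj_{\gamma(\xi+\eLieT)}(\xi+\eLieT)\in S_{\text{reg}}$. By Lemma \ref{Lemma: Kernel of the nilnegative element} we have $\ker(\adj_{\eta})\subseteq\mathfrak{u}=\bigoplus_{n\geq 1}\mathfrak{g}_{(n)}$, so every element of $S_{\text{reg}}=\xi+\ker(\adj_{\eta})$ has zero height-$0$ component (its only height-$\leq 0$ part being $\xi\in\mathfrak{g}_{(-1)}$). On the other hand, writing $\gamma(\xi+\eLieT)=\exp(Y)$ and expanding
\begin{equation*}
\Adj_{\gamma(\xi+\eLieT)}(\xi+\eLieT)=\exp(\adj_{Y})(\xi+\eLieT)=\sum_{k\geq 0}\tfrac{1}{k!}(\adj_{Y})^k(\xi+\eLieT),
\end{equation*}
I note that $\adj_{Y}$ raises height by at least $1$, so only the $k=0$ and $k=1$ terms can contribute in height $0$. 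The $k=0$ term contributes $\eLieT$, and the only height-$0$ contribution from $k=1$ is $[Y_{(1)},\xi]$. Equating the height-$0$ component to $0$ then yields $[\xi,Y_{(1)}]=\eLieT$.

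It remains to solve this linear equation. Since $\mathfrak{g}_{(1)}=\bigoplus_{\beta\in\Pi}\mathbb{C}e_{\beta}$, I write $Y_{(1)}=\sum_{\beta\in\Pi}b_{\beta}e_{\beta}$ and use that $[e_{-\alpha},e_{\beta}]=-\delta_{\alpha\beta}h_{\alpha}$ for $\alpha,\beta\in\Pi$ (the difference of two distinct simple roots is never a root), obtaining $[\xi,Y_{(1)}]=-\sum_{\beta\in\Pi}b_{\beta}h_{\beta}$. Comparing with $\eLieT=\sum_{\alpha\in\Pi}\eLieT_{\alpha}h_{\alpha}$ and using that the $h_{\alpha}$ form a basis of $\mathfrak{t}$ forces $b_{\beta}=-\eLieT_{\beta}$, i.e. $Y_{(1)}=-\sum_{\alpha\in\Pi}\eLieT_{\alpha}e_{\alpha}$, which is exactly the identity sought. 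Hence the height-$1$ component of $m_{(\geq 2)}(\eLieT)$ vanishes and $m_{(\geq 2)}(\eLieT)\in\mathfrak{g}_{(\geq 2)}$. The ``in particular'' clause is then immediate: $\tilde{\gamma}$ is a morphism (being $(\exp\vert_{\mathfrak{u}})^{-1}\circ\gamma$) and $\eLieT\mapsto\sum_{\alpha\in\Pi}\eLieT_{\alpha}e_{\alpha}$ is linear, so $m_{(\geq 2)}$ is a morphism $\mathfrak{t}\rightarrow\mathfrak{g}$ whose image lies in the linear subspace $\mathfrak{g}_{(\geq 2)}$.

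The main obstacle is conceptual rather than computational: one must recognize that the defining property of $\gamma$ should be probed in a single weight, namely the height-$0$ part, and that $S_{\text{reg}}$ contributes nothing there. Once the grading is set up --- in particular once one observes that $\adj_{Y}$ is strictly height-raising, so that the exponential series truncates in each fixed height --- the remaining steps are routine $\mathfrak{sl}_2$- and root-theoretic bookkeeping.
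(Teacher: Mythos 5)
Your proposal is correct and takes essentially the same approach as the paper: both expand $\exp(\adj_{\tilde{\gamma}(\xi+\eLieT)})(\xi+\eLieT)$, use Lemma \ref{Lemma: Kernel of the nilnegative element} to place the result in $\xi+\mathfrak{u}$, and extract the $\mathfrak{t}$-component (your height-zero equation $\eLieT+[Y_{(1)},\xi]=0$) to force the height-one part of $\tilde{\gamma}(\xi+\eLieT)$ to equal $-\sum_{\alpha\in\Pi}\eLieT_{\alpha}e_{\alpha}$. The paper organizes the bookkeeping slightly differently---writing $m_{(\geq 2)}(\eLieT)=\sum_{\alpha\in\Pi}d_{\alpha}e_{\alpha}+w$ with $w\in\mathfrak{g}_{(\geq 2)}$ and showing each $d_{\alpha}=0$---but the underlying computation is identical.
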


\begin{proof}
	Note that
	\begin{align*}
	\Adj_{\gamma(\xi+\eLieT)}(\xi+\eLieT) & =\Adj_{\exp(\tilde{\gamma}(\xi+\eLieT))}(\xi+\eLieT)\\
	&= \exp(\text{ad}_{\tilde{\gamma}(\xi+\eLieT)})(\xi+\eLieT) \\
	%&= \left(1 + \text{ad}_{\tilde{\gamma}(z')} + \sum_{k\geq2}\frac{1}{k!}(\text{ad}_{\tilde{\gamma}(z')})^k \right)(z') \\
	&= (\xi+\eLieT) + [\tilde{\gamma}(\xi+\eLieT),\xi+\eLieT] +  \sum_{k\geq2}\frac{1}{k!}(\text{ad}_{\tilde{\gamma}(\xi+\eLieT)})^k(\xi+\eLieT) \\
	&= \xi+(\eLieT + [\tilde{\gamma}(\xi+\eLieT),\xi]) + \left( [\tilde{\gamma}(\xi+\eLieT),\eLieT]+\sum_{k\geq2}\frac{1}{k!}(\text{ad}_{\tilde{\gamma}(\xi+\eLieT)})^k(\xi+\eLieT) \right).
	\end{align*}
	Now recall that $\text{Ad}_{{\gamma}(\xi+\eLieT)}(\xi+\eLieT)\in S_{\text{reg}}=\xi+\mathrm{ker}(\adj_{\eta})$, which by Lemma \ref{Lemma: Kernel of the nilnegative element} means that this last line belongs to $\xi+\mathfrak{u}$, i.e.
	$$(\eLieT + [\tilde{\gamma}(\xi+\eLieT),\xi]) + \left( [\tilde{\gamma}(\xi+\eLieT),\eLieT]+\sum_{k\geq2}\frac{1}{k!}(\text{ad}_{\tilde{\gamma}(\xi+\eLieT)})^k(\xi+\eLieT) \right)\in\mathfrak{u}.$$ One can use the conditions $\eLieT\in\mathfrak{t}$ and $\tilde{\gamma}(\xi+\eLieT)\in\mathfrak{u}$ to see that both $[\tilde{\gamma}(\xi+\eLieT),\eLieT]$ and the sum over $k$ belong to $\mathfrak{u}$, so that
	\begin{align}\label{eq: property of gamma 10}
	\eLieT + [\tilde{\gamma}(\xi+\eLieT),\xi] \in \mathfrak{u}
	\end{align}
	must hold. 
	
	%%%%%%%%%%%%%%%%%%%%

	%%%%%%%%%%%%%%%%%%%%%%%%%%%%%%
	
	Now note that $m_{(\geq2)}(\eLieT)\in\mathfrak{u}$ by \eqref{Equation: Definition of m}, so that we can write
	$$m_{(\geq2)}(\eLieT)=\sum_{\alpha\in\Pi}d_{\alpha}e_{\alpha}+w$$
	for some $w\in\mathfrak{g}_{(\geq 2)}$ and coefficients $d_{\alpha}\in\mathbb{C}$. By \eqref{Equation: Definition of m} again, we have 
	$$\tilde{\gamma}(\xi+\eLieT)=-\sum_{\alpha\in\Pi}\eLieT_{\alpha}e_{\alpha}+m_{(\geq2)}(\eLieT)=\sum_{\alpha\in\Pi}(d_{\alpha}-\eLieT_{\alpha})e_{\alpha}+w.$$
	It follows that the vector in \eqref{eq: property of gamma 10} can be written as
	\begin{align*}
	\eLieT + [\tilde{\gamma}(\xi+\eLieT),\xi]
	&=\eLieT + \sum_{\alpha\in\Pi}(d_{\alpha}-\eLieT_{\alpha})[e_{\alpha},\xi] + [w,\xi] \\
	&=\eLieT + \sum_{\alpha\in\Pi}(d_{\alpha}-\eLieT_{\alpha})h_{\alpha} + [w,\xi] \\
	%&=z + \sum_{\alpha\in\Pi} (d_{\alpha}-z_{\alpha})h_{\alpha} +[w,\xi]\\
	&=\sum_{\alpha\in\Pi}d_{\alpha}h_{\alpha}+[w,\xi],
	\end{align*}
	where the the third line is a consequence of \eqref{Equation: Expansion of z}. In particular, \eqref{eq: property of gamma 10} now implies that $$\sum_{\alpha\in\Pi}d_{\alpha}h_{\alpha}+[w,\xi]\in\mathfrak{u}.$$ We also know that $w\in\mathfrak{g}_{(\geq 2)}$ and $\xi\in\mathfrak{g}_{(-1)}$, so that $[w,\xi]\in\mathfrak{u}$. We conclude that $d_{\alpha}=0$ for all $\alpha\in\Pi$, giving $m_{(\geq2)}(\eLieT)=w\in\mathfrak{g}_{(\geq 2)}$.
\end{proof}

\subsubsection{\rm \textbf{The morphism} $\nu\colon\mathcal{O}_{\text{Toda}}\rightarrow B/Z$}
Recall the morphisms $\theta:\mathcal{O}_{\text{Toda}}\rightarrow T/Z$ and $\gamma:\xi+\mathfrak{t}\rightarrow U$ considered in
\ref{Subsection: The morphism theta} and \ref{Subsection: The morphism gamma}, respectively. One then has a morphism $\nu:\mathcal{O}_{\text{Toda}}\rightarrow B/Z$, whose value at $\eToda\in\mathcal{O}_{\text{Toda}}$ we define to be the following product in $B/Z$:
\begin{equation}\label{Equation: Definition of nu}\nu(\eToda):=\theta(\eToda)^{-1}\gamma(\xi+\eToda_{(0)})^{-1}\in B/Z,\end{equation} where $\eToda_{(0)}\in\mathfrak{t}$ is the $\mathfrak{t}$-component of $\eToda$ under the conventions from Subsection \ref{Subsection: Basic Lie theory}. This morphism will feature prominently in our work, partly because of the lemma below.

\begin{lemma}\label{Lemma: Existence of unique Borel element}
We have $\Adj_{\nu(\eToda)^{-1}}(\eToda)\in S_{\emph{reg}}$ for all $\eToda\in \mathcal{O}_{\emph{Toda}}$. 
\end{lemma}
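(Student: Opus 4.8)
The plan is to compute $\Adj_{\nu(\eToda)^{-1}}(\eToda)$ directly, peeling off the two factors of $\nu(\eToda)$ in turn and invoking the key properties of $\theta$ and $\gamma$ established in the preceding lemmas. Writing $\nu(\eToda)=\theta(\eToda)^{-1}\gamma(\xi+\eToda_{(0)})^{-1}$, we have $\nu(\eToda)^{-1}=\gamma(\xi+\eToda_{(0)})\theta(\eToda)$, so that
\begin{equation*}
\Adj_{\nu(\eToda)^{-1}}(\eToda)=\Adj_{\gamma(\xi+\eToda_{(0)})}\big(\Adj_{\theta(\eToda)}(\eToda)\big).
\end{equation*}
First I would apply Lemma \ref{lem: Useful identity}, which tells us precisely that $\Adj_{\theta(\eToda)}(\eToda)=\xi+\eToda_{(0)}$. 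This reduces the expression to $\Adj_{\gamma(\xi+\eToda_{(0)})}(\xi+\eToda_{(0)})$.

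The second and final step is then immediate from the defining property of $\gamma$. Recall from \ref{Subsection: The morphism gamma} that Kostant's morphism $\gamma:\xi+\mathfrak{t}\rightarrow U$ is characterized by the condition $\Adj_{\gamma(z)}(z)\in S_{\emph{reg}}$ for all $z\in\xi+\mathfrak{t}$. Setting $z:=\xi+\eToda_{(0)}$ — which indeed lies in $\xi+\mathfrak{t}$ since $\eToda_{(0)}\in\mathfrak{t}$ — we obtain $\Adj_{\gamma(\xi+\eToda_{(0)})}(\xi+\eToda_{(0)})\in S_{\emph{reg}}$, which is exactly the desired conclusion.

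There is essentially no main obstacle here: the lemma is a formal consequence of two results proved immediately beforehand, and its proof amounts to observing that $\nu$ was \emph{defined} precisely so that $\Adj_{\nu(\eToda)^{-1}}$ first transports $\eToda$ into the affine slice $\xi+\mathfrak{t}$ (via $\theta$) and then further conjugates into $S_{\emph{reg}}$ (via $\gamma$). The only points requiring care are bookkeeping ones: I must confirm that the inversion $\nu(\eToda)^{-1}=\gamma(\xi+\eToda_{(0)})\theta(\eToda)$ is handled correctly as an identity in $B/Z$, and that the conjugation action $\Adj$ descends unambiguously to $B/Z$, which holds because $Z$ lies in the kernel of $\Adj$ (as used in the proof of Proposition \ref{Lemma: B-stabilizer}). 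With these remarks in place, the two displayed equalities above constitute the complete argument.
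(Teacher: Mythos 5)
Your proposal is correct and is essentially the paper's own proof: both invert $\nu(\eToda)=\theta(\eToda)^{-1}\gamma(\xi+\eToda_{(0)})^{-1}$, apply Lemma \ref{lem: Useful identity} to get $\Adj_{\theta(\eToda)}(\eToda)=\xi+\eToda_{(0)}$, and conclude via Kostant's defining property of $\gamma$. The additional remarks about $\Adj$ descending to $B/Z$ are a harmless (and valid) bookkeeping supplement that the paper leaves implicit.
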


\begin{proof}
Note that
$$\Adj_{\nu(\eToda)^{-1}}(\eToda)=\Adj_{\gamma(\xi+\eToda_{(0)})}\left(\Adj_{\theta(\eToda)}(\eToda)\right)=\Adj_{\gamma(\xi+\eToda_{(0)})}(\xi+\eToda_{(0)})\in S_{\text{reg}}$$ for all $\eToda\in\mathcal{O}_{\text{Toda}}$, where the second equality follows from Lemma \ref{lem: Useful identity}.
\end{proof}

We devote the balance of this subsection to deriving a formula for $d_{\eToda}\nu:T_{\eToda}\mathcal{O}_{\text{Toda}}\rightarrow T_{\nu(\eToda)}(B/Z)$, the differential of $\nu$ at $\eToda\in\mathcal{O}_{\text{Toda}}$. Recalling what is meant by $\mathfrak{g}_{(n)}$ and $x_{(n)}$ for $x\in\mathfrak{g}$ and $n\in\mathbb{Z}$ (see Subsection \ref{Subsection: Basic Lie theory}), and keeping the description \eqref{Equation: Tangent space to our orbit} of $T_{\eToda}\mathcal{O}_{\text{Toda}}$ in mind, our formula for $d_{\eToda}\nu$ is as follows.

\begin{lemma}\label{lem: property of nu 10}
For each fixed $\eToda\in\mathcal{O}_{\text{{\rm Toda}}}$, there exists a morphism $m_{(\geq2)}':\mathfrak{g}_{(1)}\rightarrow\mathfrak{g}_{(\geq 2)}$ $($depending on $\eToda$$)$ such that
	%Let $m''_{(\geq2)}:\mathfrak{g}_{(1)}\rightarrow\mathfrak{g}_{(\geq 2)}$ be the differential of $m'_{(\geq2)}:\mathfrak{g}_{(1)}\rightarrow\mathfrak{g}_{(\geq 2)}$ at the origin.
	%We have 
	\begin{align*}
	d_{\eToda}\nu(\pi_{\mathfrak{b}_-}([\eToda,\eLieB])) = -d_{e} R_{\nu(\eToda)} \left(\eLieB_{(0)}+\eLieB_{(1)}+m_{(\geq2)}'(\eLieB_{(1)})\right), \quad \eLieB\in\mathfrak{b},
	\end{align*}
	where $R_{\nu(\eToda)}:B/Z\rightarrow B/Z$ denotes the right multiplication by $\nu(\eToda)$ and $d_{e} R_{\nu(\eToda)}:\mathfrak{b}\rightarrow T_{\nu(\eToda)}(B/Z)$ is its differential at $e\in B/Z$.
	%for all $x\in\mathcal{O}_{\text{Toda}}$ and $\epsilon\in\mathfrak{b}$, where $R_{\nu(x)}:B/Z\rightarrow B/Z$ denotes the right multiplication by $\nu(x)$ and $d_{e} R_{\nu(x)}:\mathfrak{b}\rightarrow T_{\nu(x)}(B/Z)$ is its differential at $e\in B/Z$.
\end{lemma}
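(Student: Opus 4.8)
The plan is to realise the tangent vector $\pi_{\mathfrak{b}_{-}}([\eToda,\eLieB])$ as the velocity of an explicit curve and to differentiate the defining product \eqref{Equation: Definition of nu} along it. Fix $\eToda\in\mathcal{O}_{\text{Toda}}$ and $\eLieB\in\mathfrak{b}$, and decompose $\eLieB=\eLieB_{(0)}+\eLieB_{(1)}+\eLieB_{(\geq2)}$ into height-graded pieces, writing $\eLieB_{(1)}=\sum_{\alpha\in\Pi}\eLieB_{\alpha}e_{\alpha}$. Consider the curve $\eToda(s):=\exp(-s\eLieB)\ast\eToda$, which stays in $\mathcal{O}_{\text{Toda}}$ since the latter is a single $B$-orbit. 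Differentiating the action $b\ast\eToda=\pi_{\mathfrak{b}_{-}}(\Adj_{b}(\eToda))$ gives $\tfrac{d}{ds}\big|_{s=0}\eToda(s)=\pi_{\mathfrak{b}_{-}}([\eToda,\eLieB])$, so it suffices to compute $\tfrac{d}{ds}\big|_{s=0}\nu(\eToda(s))\in T_{\nu(\eToda)}(B/Z)$. Since $d_{e}R_{\nu(\eToda)}$ is an isomorphism, this velocity equals $d_{e}R_{\nu(\eToda)}(\delta)$ for a unique $\delta\in\mathfrak{b}$ (the right logarithmic derivative of $\nu(\eToda(s))$), and the lemma is equivalent to the assertion that $\delta=-(\eLieB_{(0)}+\eLieB_{(1)}+m_{(\geq2)}'(\eLieB_{(1)}))$ for a suitable linear map $m_{(\geq2)}'\colon\mathfrak{g}_{(1)}\to\mathfrak{g}_{(\geq2)}$.

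First I would differentiate the two factors of $\nu(\eToda(s))=\theta(\eToda(s))^{-1}g(s)^{-1}$, where $g(s):=\gamma(\xi+\eToda(s)_{(0)})$, and recombine via the product rule for right logarithmic derivatives, obtaining $\delta=\mathrm{rlog}(\theta(\eToda(s))^{-1})+\Adj_{\theta(\eToda)^{-1}}\!\big(\mathrm{rlog}(g(s)^{-1})\big)$. For the torus factor, write $\exp(-s\eLieB)=t_{s}u_{s}$ with $t_{s}\in T$ and $u_{s}\in U$; as the $T$--$U$ multiplication map is an isomorphism with differential $\mathfrak{b}\to\mathfrak{t}\oplus\mathfrak{u}$ at the identity, one gets $\dot{t}_{0}=-\eLieB_{(0)}$, and Lemma \ref{lem: T-equivariance of theta} gives $\theta(\eToda(s))=t_{s}^{-1}\theta(\eToda)$, whence $\mathrm{rlog}(\theta(\eToda(s))^{-1})=-\eLieB_{(0)}$ lies in height $0$. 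For the $\mathfrak{t}$-component, \eqref{Equation: Useful formula} yields $\tfrac{d}{ds}\big|_{s=0}\eToda(s)_{(0)}=-\sum_{\alpha\in\Pi}\eLieB_{\alpha}\eToda_{-\alpha}h_{\alpha}$, which depends on $\eLieB$ only through $\eLieB_{(1)}$. Feeding this into the identity $\tilde{\gamma}(\xi+z)=-\sum_{\alpha\in\Pi}z_{\alpha}e_{\alpha}+m_{(\geq2)}(z)$ coming from \eqref{Equation: Definition of m} and Lemma \ref{lem: ht 1 infromation of gamma}, differentiation shows that $\tfrac{d}{ds}\big|_{s=0}\tilde{\gamma}(\xi+\eToda(s)_{(0)})$ has height-$1$ component $\sum_{\alpha\in\Pi}\eLieB_{\alpha}\eToda_{-\alpha}e_{\alpha}$ and a height-$\geq2$ component given by the differential of $m_{(\geq2)}$.

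The decisive step is bookkeeping by height. Because $\log g(0)\in\mathfrak{u}$ has strictly positive height, both the differential-of-$\exp$ factor in $\mathrm{rlog}(g(s))$ and the operator $\Adj_{g(0)^{-1}}$ relating $\mathrm{rlog}(g(s)^{-1})$ to $\mathrm{rlog}(g(s))$ act as the identity plus a strictly height-raising operator; hence they leave the height-$1$ part unchanged and alter only the height-$\geq2$ part. Thus $\mathrm{rlog}(g(s)^{-1})$ has height-$1$ part $-\sum_{\alpha\in\Pi}\eLieB_{\alpha}\eToda_{-\alpha}e_{\alpha}$ and a height-$\geq2$ part that is linear in $\eLieB_{(1)}$. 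Finally $\Adj_{\theta(\eToda)^{-1}}$ preserves every graded piece and scales $e_{\alpha}$ by $\alpha(\theta(\eToda))^{-1}=\eToda_{-\alpha}^{-1}$ by \eqref{Equation: Root identity}, turning the height-$1$ part into $-\sum_{\alpha\in\Pi}\eLieB_{\alpha}e_{\alpha}=-\eLieB_{(1)}$. Collecting the three graded contributions gives $\delta=-\eLieB_{(0)}-\eLieB_{(1)}+(\text{a height-}{\geq}2\text{ term linear in }\eLieB_{(1)})$; setting $m_{(\geq2)}'(\eLieB_{(1)})$ equal to the negative of that last term yields a linear (hence algebraic) map $\mathfrak{g}_{(1)}\to\mathfrak{g}_{(\geq2)}$ depending on $\eToda$, and completes the identification.

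The hard part will be this graded analysis: the non-abelian product rule introduces conjugations by $\theta(\eToda)^{-1}$ and $g(0)^{-1}$ as well as the nonlinearity of $\exp$ on $U$, and rather than face an intractable Baker--Campbell--Hausdorff computation, the whole point is to observe that every resulting correction is strictly height-raising and therefore invisible in heights $0$ and $1$. The subtle accompanying claim is that the height-$\geq2$ correction depends on $\eLieB$ only through $\eLieB_{(1)}$ -- which is exactly what licenses writing it as $m_{(\geq2)}'(\eLieB_{(1)})$ -- and this follows because $\tfrac{d}{ds}\big|_{s=0}\eToda(s)_{(0)}$, and hence every downstream quantity, depends on $\eLieB$ solely through $\eLieB_{(1)}$.
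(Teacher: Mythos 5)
Your proposal is correct; all the key steps check out, including the sign conventions (the curve $\exp(-s\eLieB)\ast\eToda$ does have velocity $+\pi_{\mathfrak{b}_-}([\eToda,\eLieB])$), the non-abelian product rule $\mathrm{rlog}(ab)=\mathrm{rlog}(a)+\Adj_{a(0)}(\mathrm{rlog}(b))$, and the height bookkeeping. The underlying strategy is the same as the paper's --- differentiate $\nu$ along orbit curves and feed in Lemma \ref{lem: T-equivariance of theta}, formula \eqref{Equation: Useful formula}, Lemma \ref{lem: ht 1 infromation of gamma}, and the scaling property \eqref{Equation: Root identity}, observing that every correction is strictly height-raising and depends on $\eLieB$ only through $\eLieB_{(1)}$ --- but the execution is genuinely different. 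The paper splits the direction $\eLieB=\eLieB_{(0)}+y$ into its $\mathfrak{t}$- and $\mathfrak{u}$-parts and computes $\nu$ \emph{exactly} along the two resulting curves at the group level: the unipotent curve requires introducing the auxiliary morphism $\varrho:U\rightarrow U$, invoking the Baker--Campbell--Hausdorff formula to collapse a product of exponentials into a single exponential with correction $r_{(\geq 2)}$, and a chain of conjugations by $\theta(\eToda)$, with differentiation postponed to the very last step. You instead work infinitesimally from the start: a single curve, right logarithmic derivatives, and the derivative-of-the-exponential operator $\Phi(\mathrm{ad}_{w(0)})=\mathrm{Id}+(\text{strictly height-raising terms})$ in place of BCH. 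What your route buys: it avoids the group-level BCH manipulation (the messiest stretch of the paper's proof), and the linearity of $m_{(\geq2)}'$ --- hence its being a morphism --- is immediate, since each operator you apply ($\Phi(\mathrm{ad}_{w(0)})$, $-\Adj_{g(0)^{-1}}$, $\Adj_{\theta(\eToda)^{-1}}$) is linear and acts on $\dot{w}(0)$, which is visibly linear in $\eLieB_{(1)}$. What the paper's route buys: exact formulas for $\nu$ along the curves, e.g. $\nu(\exp(sy)\ast\eToda)=\exp\left(sy_{(1)}+r'_{(\geq2)}(sy_{(1)})\right)\nu(\eToda)$ for all $s$, which carries more information than the derivative alone. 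One point you leave implicit and should write out: the identity $\mathrm{rlog}(g^{-1})=-\Adj_{g(0)^{-1}}(\mathrm{rlog}(g))$, which is the source of the sign flip making the height-one contribution equal to $-\sum_{\alpha\in\Pi}\eLieB_{\alpha}\eToda_{-\alpha}e_{\alpha}$; with that line added, the argument is complete.
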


\begin{proof}
Given $\eLieB\in\mathfrak{b}$, we have
\begin{align*}\left.\frac{d}{ds}\right|_{s=0}(\exp(s\eLieB)\ast \eToda) = \left.\frac{d}{ds}\right|_{s=0}(\pi_{\mathfrak{b}_{-}}(\Adj_{\exp(s\eLieB)}(\eToda))) = -\pi_{\mathfrak{b}_{-}}([\eToda,\eLieB]).
\end{align*}
Now write $\eLieB=\eLieB_{(0)}+y$, $y\in\mathfrak{u}$. Since both $\eLieB_{(0)}$ and $y$ are in $\mathfrak{b}$, the above computation shows that 
\begin{align}
\begin{split}\label{eq: decomposition of a tangent vector }
d_{\eToda} \nu(\pi_{\mathfrak{b}_-}([\eToda,\eLieB])) 
& = d_{\eToda} \nu(\pi_{\mathfrak{b}_-}([\eToda,\eLieB_{(0)}]))  + d_{\eToda} \nu(\pi_{\mathfrak{b}_-}([\eToda,y])) \\
& =-\left.\frac{d}{ds}\right|_{s=0}\nu(\exp(s\eLieB_{(0)})\ast \eToda)-\left.\frac{d}{ds}\right|_{s=0}\nu(\exp(sy)\ast \eToda).
\end{split}
\end{align}
For the first summand, we have 
\begin{align}
\begin{split}\label{eq:computing first summand}
\nu(\exp(s\eLieB_{(0)})\ast \eToda) 
&= \theta((\exp s\eLieB_{(0)})*\eToda)^{-1}\gamma( \xi + ((\exp s\eLieB_{(0)})*\eToda)_{(0)})^{-1} \\
&= \theta((\exp s\eLieB_{(0)})*\eToda)^{-1}\gamma( \xi + \eToda_{(0)})^{-1} \hspace{71pt} [\text{by \eqref{Equation: Useful formula}}]\\
&= \theta(\eToda)^{-1}\exp(s\eLieB_{(0)})\gamma( \xi + \eToda_{(0)})^{-1} \hspace{81pt} [\text{by Lemma \ref{lem: T-equivariance of theta}}] \\
&= \theta(\eToda)^{-1}\exp(s\eLieB_{(0)})\theta (\eToda)\nu(\eToda) \hspace{102pt} [\text{by \eqref{Equation: Definition of nu}}]\\
&= \exp(s\eLieB_{(0)})\nu(\eToda) \hspace{155pt} [\text{since $T/Z$ is abelian}].
\end{split}
\end{align}
For the second summand, we have
\begin{align}
\begin{split}\label{eq:computing second summand}
\nu(\exp(sy)\ast \eToda) 
&= \theta((\exp sy)*\eToda)^{-1}\gamma( \xi + ((\exp sy)*\eToda)_{(0)})^{-1} \\
&= \theta(\eToda)^{-1}\gamma( \xi + ((\exp sy)*\eToda)_{(0)})^{-1} \hspace{70pt} [\text{by Lemma \ref{lem: T-equivariance of theta}}]. %\\
%&= \theta(x)^{-1}(\varrho_{\eToda}(\exp(y)))^{-1} %\\
%&= \exp\left(sy_{(1)}+m'_{(\geq2)}(sy_{(1)})\right) \cdot \nu(x) \hspace{105pt} [\text{by Lemma \ref{lem: technical 10}}].
\end{split}
\end{align}
To deal with the term $\gamma( \xi + ((\exp sy)*\eToda)_{(0)})$, we consider the morphism $\varrho:U\rightarrow U$ defined by
\begin{equation}\label{Equation: Definition of varrho}
\exp(y) \mapsto \gamma(\xi+(\exp(y)*\eToda)_{(0)}),\quad y\in\mathfrak{u}.
\end{equation}
We have from \eqref{Equation: Definition of m} that
	\begin{align}\label{Equation: To be exponentiated}
	\tilde{\gamma}(\xi+(\exp(y)*\eToda)_{(0)}) 
	&= -\sum_{\alpha\in\Pi}((\exp(y)*\eToda)_{(0)})_{\alpha}e_{\alpha} + m_{(\geq2)}((\exp(y)*\eToda)_{(0)}). %\\
	%&= -\sum_{\alpha\in\Pi}((x_{(0)})_{\alpha}+(y_{(1)})_{\alpha}x_{-\alpha})e_{\alpha} + m_{(\geq2)}((\exp(y)*x)_{(0)})
	\end{align}
	Now take a decomposition $\eToda=\eToda_{(0)}+\sum_{\alpha\in\Pi}\eToda_{-\alpha}e_{-\alpha}=\sum_{\alpha\in\Pi}(\eToda_{(0)})_{\alpha}h_{\alpha}+\sum_{\alpha\in\Pi}\eToda_{-\alpha}e_{-\alpha}$ as usual, and write $y$ in the form \eqref{eq: decomposition for y}. The formula \eqref{Equation: Useful formula} then gives $$(\exp(y)*\eToda)_{(0)}=\eToda_{(0)}+\sum_{\alpha\in\Pi}y_{\alpha}\eToda_{-\alpha}h_{\alpha}, \quad\text{and hence}\quad  ((\exp(y)*\eToda)_{(0)})_{\alpha}=(\eToda_{(0)})_{\alpha}+y_{\alpha}\eToda_{-\alpha}.$$ Note also that $\varrho(\exp(y))=\gamma(\xi+(\exp(y)*\eToda)_{(0)})$ is obtained by exponentiating \eqref{Equation: To be exponentiated} (see the definition of $\tilde{\gamma}$). Using these last two sentences, we conclude that $\varrho: U\rightarrow U$ is given by 
	%Thus, the map can be expressed as
	\begin{align}\label{eq: property of gamma 20}
	\exp(y) \mapsto \exp \left(-\sum_{\alpha\in\Pi}(\eToda_{(0)})_{\alpha}e_{\alpha}+f(y_{(1)}) \right),\quad y\in\mathfrak{u},
	\end{align}
	where 
	%$f\colon \mathfrak{u}\rightarrow \mathfrak{u}$ is a morphism given by 
	\begin{align}\label{eq:introducing morphism f}
	&f(y_{(1)}):=-\sum_{\alpha\in\Pi}y_{\alpha}\eToda_{-\alpha}e_{\alpha}
	+ m_{(\geq2)}\left(\eToda_{(0)}+\sum_{\alpha\in\Pi}y_{\alpha}\eToda_{-\alpha}h_{\alpha}\right).
	\end{align}
	%for $y=\sum_{i\geq0}y_i$ with $y_1=\sum_{\alpha\in\Pi}(y_1)_{\alpha}e_{\alpha}$. 
	By comparing the expressions \eqref{Equation: Definition of varrho} and \eqref{eq: property of gamma 20} and setting $y=0$, we obtain
	\begin{align*}
	\gamma(\xi+\eToda_{(0)}) = \exp\left(-\sum_{\alpha\in\Pi}(\eToda_{(0)})_{\alpha}e_{\alpha}+f(0)\right) .
	\end{align*}
	It follows that the right hand side of \eqref{eq: property of gamma 20} can be written as
	\begin{align}
	\begin{split}\label{Equation: Beginning of calculation}
	&\exp \left(-\sum_{\alpha\in\Pi}(\eToda_{(0)})_{\alpha}e_{\alpha}+f(y_{(1)}) \right) \\
	&\qquad 
	=
	\gamma(\xi+\eToda_{(0)})\gamma(\xi+\eToda_{(0)})^{-1} \exp\left(-\sum_{\alpha\in\Pi}(\eToda_{(0)})_{\alpha}e_{\alpha}+f(y_{(1)})\right) \\
&\qquad
=
	\gamma(\xi+\eToda_{(0)})\exp\left(\sum_{\alpha\in\Pi}(\eToda_{(0)})_{\alpha}e_{\alpha}-f(0)\right) \exp\left(-\sum_{\alpha\in\Pi}(\eToda_{(0)})_{\alpha}e_{\alpha}+f(y_{(1)})\right). \\
\end{split}
\end{align}
The Baker--Campbell--Hausdorff formula (cf. \cite[Section 1.7]{Abbaspour}) gives
$$\exp\left(\sum_{\alpha\in\Pi}(\eToda_{(0)})_{\alpha}e_{\alpha}-f(0)\right) \exp\left(-\sum_{\alpha\in\Pi}(\eToda_{(0)})_{\alpha}e_{\alpha}+f(y_{(1)})\right)=\exp\big(f(y_{(1)})-f(0)+r_{(\geq 2)}(y_{(1)})\big),$$ where the correction term $r_{(\geq 2)}(y_{(1)})$ is an (a priori) infinite linear combination of nested Lie brackets in $\sum_{\alpha\in\Pi}(\eToda_{(0)})_{\alpha}e_{\alpha}-f(0)$ and $-\sum_{\alpha\in\Pi}(\eToda_{(0)})_{\alpha}e_{\alpha}+f(y_{(1)})$. Since these last two vectors are in $\mathfrak{u}$, one can verify that $r_{(\geq 2)}(y_{(1)})$ is actually a finite sum belonging to $\mathfrak{g}_{(\geq 2)}$. If follows that $r_{(\geq 2)}$ defines a morphism $r_{(\geq 2)}:\mathfrak{g}_{(1)}\rightarrow\mathfrak{g}_{(\geq 2)}$ (depending on $v$).

Resuming our calculation \eqref{Equation: Beginning of calculation}, we have
\begin{align}
\begin{split}\label{Equation: Middle of calculation}
	&\exp \left(-\sum_{\alpha\in\Pi}(\eToda_{(0)})_{\alpha}e_{\alpha}+f(y_{(1)}) \right) \\
	&\qquad=\gamma(\xi+\eToda_{(0)})\exp\big(f(y_{(1)})-f(0)+r_{(\geq 2)}(y_{(1)})\big)\\
	&\qquad=
	\gamma(\xi+\eToda_{(0)})\theta(\eToda)\theta(\eToda)^{-1}\exp\big(f(y_{(1)})-f(0)+r_{(\geq 2)}(y_{(1)})\big)\theta(\eToda) \theta(\eToda)^{-1} \\
	&\qquad= \gamma(\xi+\eToda_{(0)})\theta(\eToda)\exp\big(\text{Ad}_{\theta(\eToda)^{-1}}(f(y_{(1)})-f(0)+r_{(\geq 2)}(y_{(1)}))\big)\theta(\eToda)^{-1}.
\end{split}
\end{align}
At the same time, \eqref{Equation: Root identity} implies that $\Adj_{\theta(\eToda)^{-1}}(e_{\alpha})=(v_{-\alpha})^{-1}e_{\alpha}$ for all $\alpha\in\Pi$. We can combine this observation with \eqref{eq:introducing morphism f} to obtain $$\Adj_{\theta(\eToda)^{-1}}(f(y_{(1)})-f(0))=-\sum_{\alpha\in\Pi}y_{\alpha}e_{\alpha}+s_{(\geq 2)}(y_{(1)})=-y_{(1)}+s_{(\geq 2)}(y_{(1)})$$ for some morphism $s_{(\geq 2)}:\mathfrak{g}_{(1)}\rightarrow\mathfrak{g}_{(\geq 2)}$. Now set $$r'_{(\geq 2)}(y_{(1)}):=-s_{(\geq 2)}(y_{(1)})-\Adj_{\theta(\eToda)^{-1}}(r_{(\geq 2)}(y_{(1)}))\in\mathfrak{g}_{(\geq 2)},$$ so that \eqref{Equation: Middle of calculation} reduces to the statement
$$\exp \left(-\sum_{\alpha\in\Pi}(\eToda_{(0)})_{\alpha}e_{\alpha}+f(y_{(1)}) \right)=\gamma(\xi+\eToda_{(0)})\theta(\eToda)\exp\left(-y_{(1)}-r'_{(\geq2)}(y_{(1)})\right)\theta(\eToda)^{-1}.$$
The left hand side is $\varrho(\exp(y))$ (see \eqref{eq: property of gamma 20}), and therefore equal to $\gamma(\xi+(\exp(y)*\eToda)_{(0)})$ (see \eqref{Equation: Definition of varrho}). Noting the definition \eqref{Equation: Definition of nu} of $\nu(\eToda)$, this observation gives rise to the following equation in $B/Z$: 
\begin{align*}
\gamma(\xi+(\exp(y)*\eToda)_{(0)})
= 
\nu(\eToda)^{-1}\exp\left(-y_{(1)}-r'_{(\geq2)}(y_{(1)})\right)\theta(\eToda)^{-1}.
\end{align*}
So we can rewrite \eqref{eq:computing second summand} as
\begin{align*}
\nu(\exp(sy)\ast \eToda) 
= \exp\left(sy_{(1)}+r'_{(\geq2)}(sy_{(1)})\right)\nu(\eToda).
\end{align*} 
Combining this with \eqref{eq: decomposition of a tangent vector } with \eqref{eq:computing first summand}, we obtain
\begin{align*}
d_{\eToda} \nu(\pi_{\mathfrak{b}_-}([\eToda,\eLieB])) 
& =-\left.\frac{d}{ds}\right|_{s=0}\exp(s\eLieB_{(0)})\nu(\eToda)-\left.\frac{d}{ds}\right|_{s=0}\exp\left(sy_{(1)}+r'_{(\geq2)}(sy_{(1)})\right)\nu(\eToda).
\end{align*}
Letting $m_{(\geq2)}':\mathfrak{g}_{(1)}\rightarrow\mathfrak{g}_{(\geq 2)}$ be the differential of the map $r'_{(\geq2)}:\mathfrak{g}_{(1)}\rightarrow\mathfrak{g}_{(\geq 2)}$ at the origin, we have
\begin{align*}
d_{\eToda} \nu(\pi_{\mathfrak{b}_-}([\eToda,\eLieB])) 
&= -d_e R_{\nu(\eToda)} (\eLieB_{(0)}) - d_e R_{\nu(\eToda)} (y_{(1)}+m_{(\geq2)}'(y_{(1)})) \\
&= -d_e R_{\nu(\eToda)} (\eLieB_{(0)}+\eLieB_{(1)}+m_{(\geq2)}'(\eLieB_{(1)})),
\end{align*}
where we have noted that $\eLieB_{(1)}=y_{(1)}$.
\end{proof}

\subsection{The embedding {\rm $\mathcal{O}_{\text{Toda}}\hookrightarrow G/Z\times S_{\text{reg}}$}}\label{Subsection: The embedding of integrable systems}

Recall the morphism $\nu:\mathcal{O}_{\text{Toda}}\rightarrow B/Z$ constructed in Subsection \ref{Subsection: Three preliminary morphisms}, and consider
$$\kappa:\mathcal{O}_{\text{Toda}}\rightarrow G/Z\times S_{\text{reg}}, \quad \eToda\mapsto (\nu(\eToda),\Adj_{\nu(\eToda)^{-1}}(\eToda)).$$ 

\begin{proposition}\label{Proposotion: Locally closed immersion}
The map $\kappa$ is a locally closed immersion of algebraic varieties.
\end{proposition}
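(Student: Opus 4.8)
The plan is to exhibit $\kappa$ as a section of a suitable restriction of the moment map $\mu\colon G/Z\times S_{\mathrm{reg}}\to\mathfrak g$ from \eqref{Equation: The ideal moment map}, and then invoke the standard principle that a section of a separated morphism is a closed immersion. First I would record the key identity $\mu\circ\kappa=\iota$, where $\iota\colon\mathcal O_{\mathrm{Toda}}\hookrightarrow\mathfrak g$ denotes the inclusion. Indeed, for $\eToda\in\mathcal O_{\mathrm{Toda}}$ one computes
$$\mu(\kappa(\eToda))=\mu\bigl(\nu(\eToda),\Adj_{\nu(\eToda)^{-1}}(\eToda)\bigr)=\Adj_{\nu(\eToda)}\bigl(\Adj_{\nu(\eToda)^{-1}}(\eToda)\bigr)=\eToda,$$
where the middle equality is the definition of $\mu$ and the last uses that $\Adj$ descends to $G/Z$ (as $Z=\ker\Adj$), so that $\Adj_{\nu(\eToda)}\Adj_{\nu(\eToda)^{-1}}=\mathrm{Id}_{\mathfrak g}$. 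Here I am implicitly using Lemma \ref{Lemma: Existence of unique Borel element} to ensure that $\kappa$ genuinely maps into $G/Z\times S_{\mathrm{reg}}$, and the algebraicity of $\nu$ (Subsection \ref{Subsection: Three preliminary morphisms}) to ensure that $\kappa$ is a morphism.

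Next I would set $W:=\mu^{-1}(\mathcal O_{\mathrm{Toda}})$. Since $\mathcal O_{\mathrm{Toda}}$ is locally closed in $\mathfrak g$ (it is open in the linear subspace $\mathfrak t\oplus\bigoplus_{\alpha\in\Pi}\mathfrak g_{-\alpha}$, being the locus where every $\mathfrak g_{-\alpha}$-component is nonzero) and $\mu$ is a morphism, the set $W$ is a locally closed subvariety of $G/Z\times S_{\mathrm{reg}}$, and the inclusion $W\hookrightarrow G/Z\times S_{\mathrm{reg}}$ is a locally closed immersion. The identity $\mu\circ\kappa=\iota$ shows that $\kappa$ factors through $W$, yielding a morphism $\tilde\kappa\colon\mathcal O_{\mathrm{Toda}}\to W$ with $(\mu|_W)\circ\tilde\kappa=\mathrm{id}_{\mathcal O_{\mathrm{Toda}}}$; in other words, $\tilde\kappa$ is a section of $\mu|_W\colon W\to\mathcal O_{\mathrm{Toda}}$. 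As all objects in sight are varieties over $\mathbb C$, hence separated, the morphism $\mu|_W$ is separated, so $\tilde\kappa$ is a closed immersion (it is obtained from the diagonal $W\to W\times_{\mathcal O_{\mathrm{Toda}}}W$ by base change along $\tilde\kappa$). Composing the closed immersion $\tilde\kappa$ with the locally closed immersion $W\hookrightarrow G/Z\times S_{\mathrm{reg}}$ then gives that $\kappa$ is a locally closed immersion.

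I expect no serious obstacle: the entire argument rests on the single computation $\mu\circ\kappa=\iota$, after which the conclusion is essentially formal. The only points that merit genuine care are (i) confirming that $\kappa$ is an algebraic morphism whose second component lands in $S_{\mathrm{reg}}$, which is precisely Lemma \ref{Lemma: Existence of unique Borel element} together with the algebraicity of $\nu$; and (ii) if one wishes to avoid the abstract ``section of a separated morphism'' principle, replacing the final step by a direct check that $\kappa$ is injective, has injective differential, and is a homeomorphism onto its image. For (ii) the first two properties are immediate consequences of applying $\mu$ (respectively the differential $d\mu$) to $\mu\circ\kappa=\iota$, while the homeomorphism statement follows because $\mu|_W$ restricts to a continuous inverse of $\kappa$ on its image.
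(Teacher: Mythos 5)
Your proof is correct, but it takes a genuinely different and more formal route than the paper's. Both arguments hinge on the same key computation, namely that $\mu\circ\kappa$ is the inclusion $\mathcal{O}_{\text{Toda}}\hookrightarrow\mathfrak{g}$ (equivalently, that $\mu$ retracts $\kappa$), but you exploit it through pure category-theoretic/scheme-theoretic generalities: $\kappa$ factors as a section $\tilde\kappa$ of $\mu$ restricted to the locally closed subvariety $W=\mu^{-1}(\mathcal{O}_{\text{Toda}})$, a section of a separated morphism is a closed immersion, and a composition of immersions is an immersion. The paper instead works inside $B/Z\times S_{\text{reg}}$ and identifies the image of $\kappa$ explicitly as $\{(bZ,\eSreg): \Adj_b(\eSreg)\in\mathcal{O}_{\text{Toda}}\}$; proving that this entire set is hit by $\kappa$ requires the nontrivial inputs that regular adjoint orbits meet $S_{\text{reg}}$ exactly once \eqref{Equation: Parametrization of regular orbits} and that $B\cap Z_G(\eSreg)=Z$ (Proposition \ref{Lemma: B-stabilizer}), after which local closedness and the explicit two-sided inverse $\rho$ follow. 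What your approach buys is economy: you need only the one-sided identity $\mu\circ\kappa=\mathrm{id}$ (on the image of a section the other composite holds automatically), so Proposition \ref{Lemma: B-stabilizer} and the orbit-uniqueness statement can be bypassed entirely for this proposition; what the paper's approach buys is a concrete geometric description of $\mathrm{image}(\kappa)$, which your argument does not produce. One caution about your closing alternative (ii): in the algebraic category, ``injective, injective differentials, homeomorphism onto image'' is not by itself a complete justification that a morphism is a locally closed immersion (one still needs the image to be locally closed and the set-theoretic inverse to be a morphism), so if you wanted to avoid the section-of-separated-morphism principle you would end up reconstructing essentially the paper's argument; your main argument, however, is fully rigorous as stated.
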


\begin{proof}
Using the definition of $\kappa$, we see that the inclusion
\begin{equation*}%\label{Equation: Image inclusion}
\mathrm{image}(\kappa)\subseteq \{(bZ,\eSreg)\in B/Z\times S_{\text{reg}}:\Adj_{b}(\eSreg)\in\mathcal{O}_{\text{Toda}}\}\end{equation*}
holds. 
Conversely, suppose that $(bZ,\eSreg)\in B/Z\times S_{\text{reg}}$ satisfies $\Adj_b(\eSreg)\in\mathcal{O}_{\text{Toda}}$. Lemma \ref{Lemma: Existence of unique Borel element} then gives 
%Conversely, suppose that $(bZ,x)\in B/Z\times S_{\text{reg}}$ belongs to the right hand side of \eqref{Equation: Image inclusion}. Note that $\Adj_b(x)\in\mathcal{O}$, so that Lemma \ref{Lemma: Existence of unique Borel element} gives 
$\Adj_{\nu(\Adj_b(\eSreg))^{-1}}(\Adj_b(\eSreg))\in S_{\text{reg}}$. This is an element of $S_{\text{reg}}$ with the property of being conjugate to $\eSreg\in S_{\text{reg}}$, and it follows from the bijection \eqref{Equation: Parametrization of regular orbits} that \begin{equation}\label{Equation: Stabilizer statement} \Adj_{\nu(\Adj_b(\eSreg))^{-1}}(\Adj_b(\eSreg))=\eSreg.\end{equation} Now consider the following product in the group $B/Z$: $$\nu(\Adj_b(\eSreg))^{-1}(bZ)\in B/Z.$$ Using \eqref{Equation: Stabilizer statement} and Proposition \ref{Lemma: B-stabilizer}, we see that this product must equal the identity in $B/Z$, i.e.
$\nu(\Adj_b(\eSreg))=bZ$. Together with \eqref{Equation: Stabilizer statement}, this implies that
\begin{equation}\label{Equation: Calculation}\kappa(\Adj_b(\eSreg))=(\nu(\Adj_b(\eSreg)), \Adj_{\nu(\Adj_b(\eSreg))^{-1}}(\Adj_b(\eSreg)))=(bZ,\eSreg).\end{equation} In particular, $(bZ,\eSreg)$ lies in the image of $\kappa$ and we have 
\begin{equation*}%\label{Equation: Image description}
\mathrm{image}(\kappa)=\{(bZ,\eSreg)\in B/Z\times S_{\text{reg}}:\Adj_{b}(\eSreg)\in\mathcal{O}_{\text{Toda}}\}.\end{equation*}

We now show $\mathrm{image}(\kappa)$ to be locally closed in $G/Z\times S_{\text{reg}}$. Accordingly, the above equation implies that $\mathrm{image}(\kappa)$ is the preimage of $\mathcal{O}_{\text{Toda}}$ under the morphism $$B/Z\times S_{\text{reg}}\rightarrow\mathfrak{g},\quad (bZ,\eSreg)\mapsto\Adj_b(\eSreg).$$
Since $\mathcal{O}_{\text{Toda}}$ is locally closed in $\mathfrak{g}$ (by \eqref{eq: def of toda lattice}), this preimage description means that $\mathrm{image}(\kappa)$ is locally closed in $B/Z\times S_{\text{reg}}$. Noting that $B/Z\times S_{\text{reg}}$ is closed in $G/Z\times S_{\text{reg}}$, this forces $\mathrm{image}(\kappa)$ to be locally closed in $G/Z\times S_{\text{reg}}$. 

It remains only to exhibit an inverse of $\kappa$, viewed as a map to its image. However, it follows from the calculation \eqref{Equation: Calculation} that
$$\rho:\mathrm{image}(\kappa)\rightarrow\mathcal{O}_{\text{Toda}},\quad (bZ,\eSreg)\mapsto\Adj_b(\eSreg)$$
satisfies $\kappa\circ\rho=\mathrm{id}_{\mathrm{image}(\kappa)}$. At the same time, we have $$(\rho\circ\kappa)(\eToda)=\rho(\nu(\eToda),\Adj_{\nu(\eToda)^{-1}}(\eToda))=\Adj_{\nu(\eToda)}(\Adj_{\nu(\eToda)^{-1}}(\eToda))=\eToda$$ for all $\eToda\in\mathcal{O}_{\text{Toda}}$. We conclude that $\rho$ is the desired inverse of $\kappa$, completing the proof.    
\end{proof}

Let $\omega_{\text{Toda}}$ be the symplectic form on $\mathcal{O}_{\text{Toda}}$ described in Subsection \ref{Subsection: The Toda lattice}, and $\Omega_{\text{reg}}$ the symplectic form on $G/Z\times S_{\text{reg}}$ from Subsection \ref{Subsection: The holomorphic symplectic structure}.
\begin{proposition}\label{prop: kappa is symplectic}
We have $\kappa^*\Omega_{\emph{reg}}=\omega_{\emph{Toda}}$.
\end{proposition}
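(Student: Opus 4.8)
The plan is to verify $\kappa^{*}\Omega_{\mathrm{reg}}=\omega_{\mathrm{Toda}}$ pointwise, comparing the two forms on a general pair of tangent vectors. Fix $v\in\mathcal{O}_{\mathrm{Toda}}$ and, using the description \eqref{Equation: Tangent space to our orbit}, write arbitrary tangent vectors as $w_i=\pi_{\mathfrak{b}_-}([v,x_i])$ with $x_i\in\mathfrak{b}$, $i=1,2$. On the source side, \eqref{Equation: Modified KKS} records that $(\omega_{\mathrm{Toda}})_v(w_1,w_2)=\langle v,[x_1,x_2]\rangle$, and this is the quantity I must reproduce. On the target side, \eqref{Equation: Coincident forms} lets me evaluate $\Omega_{\mathrm{reg}}$ at $\kappa(v)=(gZ,p)$, where $g:=\nu(v)$ and $p:=\Adj_{g^{-1}}(v)\in S_{\mathrm{reg}}$, directly through the explicit formula \eqref{Equation: Symplectic form} for $\Omega$, provided the vectors $d_v\kappa(w_i)$ are expressed in the left-trivialized coordinates $T_gG\oplus\ker(\adj_\eta)$.

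First I would compute the two components of $d_v\kappa(w_i)=(d_v\nu(w_i),d_vp(w_i))$. Lemma \ref{lem: property of nu 10} yields the first: $d_v\nu(w_i)=-d_eR_{g}(X_i)$ with $X_i:=(x_i)_{(0)}+(x_i)_{(1)}+m_{(\geq2)}'((x_i)_{(1)})$. Converting right- to left-translation through $d_eR_g=d_eL_g\circ\Adj_{g^{-1}}$ rewrites this as $d_eL_g(Y_i)$ with $Y_i:=-\Adj_{g^{-1}}(X_i)$. For the second component I would avoid differentiating $p$ directly and instead exploit $\mu\circ\kappa=\mathrm{incl}_{\mathcal{O}_{\mathrm{Toda}}}$, which follows from \eqref{Equation: The ideal moment map} (it is the identity $\rho\circ\kappa=\mathrm{id}$ established in Proposition \ref{Proposotion: Locally closed immersion}). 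Differentiating $\mu(gZ,x)=\Adj_g(x)$ gives $d\mu_{(gZ,p)}(d_eL_g(Y),Z)=\Adj_g([Y,p]+Z)$, so the constraint $d\mu(d_v\kappa(w_i))=w_i$ forces $Z_i:=d_vp(w_i)=\Adj_{g^{-1}}(w_i)-[Y_i,p]$.

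Substituting $Y_i$ and $Z_i$ into \eqref{Equation: Symplectic form} and applying $\Adj$-invariance of the Killing form together with $p=\Adj_{g^{-1}}(v)$, every occurrence of $g$ cancels and the expression collapses to the intrinsic quantity $-\langle X_1,w_2\rangle+\langle X_2,w_1\rangle-\langle[X_1,X_2],v\rangle$ on $\mathcal{O}_{\mathrm{Toda}}$. Since each $X_i\in\mathfrak{b}$ and $\mathfrak{u}$ is the Killing-annihilator of $\mathfrak{b}$, the projection inside $w_j$ may be dropped, so $\langle X_i,w_j\rangle=\langle X_i,[v,x_j]\rangle$ and the form becomes $-\langle X_1,[v,x_2]\rangle+\langle X_2,[v,x_1]\rangle-\langle[X_1,X_2],v\rangle$.

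The crux is to show that replacing each $X_i$ by $x_i$ changes nothing. Writing $X_i=x_i+C_i$ with $C_i:=-(x_i)_{(\geq2)}+m_{(\geq2)}'((x_i)_{(1)})\in\mathfrak{g}_{(\geq2)}$, I would invoke the height grading of Subsection \ref{Subsection: Basic Lie theory}: the Killing form pairs $\mathfrak{g}_{(m)}$ nontrivially only with $\mathfrak{g}_{(-m)}$, while $v\in\mathfrak{t}\oplus\bigoplus_{\alpha\in\Pi}\mathfrak{g}_{-\alpha}\subseteq\mathfrak{g}_{(0)}\oplus\mathfrak{g}_{(-1)}$ and $x_j\in\mathfrak{b}=\bigoplus_{n\geq0}\mathfrak{g}_{(n)}$. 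Hence $[v,x_j]\in\bigoplus_{n\geq-1}\mathfrak{g}_{(n)}$, and every bracket expression containing a factor $C_i\in\mathfrak{g}_{(\geq2)}$ lands in $\mathfrak{g}_{(\geq2)}$; pairing any of these against $v$ or against $[v,x_j]$ therefore vanishes, so all $C_i$-contributions drop out. What remains is $-\langle x_1,[v,x_2]\rangle+\langle x_2,[v,x_1]\rangle-\langle[x_1,x_2],v\rangle$, which the invariance identity $\langle a,[b,c]\rangle=\langle[a,b],c\rangle$ reduces to $\langle v,[x_1,x_2]\rangle=(\omega_{\mathrm{Toda}})_v(w_1,w_2)$, completing the proof. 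The main obstacle is exactly this bookkeeping: isolating the correction morphism $m_{(\geq2)}'$ and confirming, via the grading, that it is symplectically invisible; the trivialization conversion and the moment-map extraction of $Z_i$ are the two other points demanding care.
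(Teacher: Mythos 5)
Your proposal is correct, and it shares the paper's overall skeleton --- both proofs fix $v\in\mathcal{O}_{\text{Toda}}$, describe tangent vectors via \eqref{Equation: Tangent space to our orbit}, feed Lemma \ref{lem: property of nu 10} and the conversion $d_eR_{\nu(v)}=d_eL_{\nu(v)}\circ\Adj_{\nu(v)^{-1}}$ into \eqref{Equation: Symplectic form} and \eqref{Equation: Coincident forms}, and dispose of the correction terms coming from $m_{(\geq 2)}'$ by the height grading (the paper phrases this as: $v$ is Killing-orthogonal to $\mathfrak{g}_{(\geq 2)}$ and $[x_1,x_2]_{(1)}=[X_1,X_2]_{(1)}$, which is your ``replace $X_i$ by $x_i$'' step). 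The genuine divergence is in how the Slodowy-slice component of $d_v\kappa$ is handled. The paper never computes $d_v\phi$ (where $\phi(v)=\Adj_{\nu(v)^{-1}}(v)$): it only observes that $d_v\phi$ lands in $T_{\phi(v)}S_{\text{reg}}=\ker(\adj_\eta)\subseteq\mathfrak{u}$ by Lemma \ref{Lemma: Kernel of the nilnegative element}, so that both cross terms in \eqref{Equation: Symplectic form} die instantly against $\Adj_{\nu(v)^{-1}}(X_i)\in\mathfrak{b}$, leaving only $\langle v,[X_1,X_2]\rangle$. You instead determine the second component explicitly from the moment-map identity $\mu\circ\kappa=\mathrm{incl}$ (which indeed follows from $\rho\circ\kappa=\mathrm{id}$ in Proposition \ref{Proposotion: Locally closed immersion}), getting $Z_i=\Adj_{\nu(v)^{-1}}(w_i)-[Y_i,p]$, and then carry the resulting extra bracket terms through the algebra; I have checked that your collapsed expression $-\langle X_1,w_2\rangle+\langle X_2,w_1\rangle-\langle[X_1,X_2],v\rangle$ and its reduction to $\langle v,[x_1,x_2]\rangle$ are both correct. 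What each route buys: the paper's orthogonality argument is shorter at the cross-term stage but requires Lemma \ref{Lemma: Kernel of the nilnegative element}; your moment-map trick bypasses that lemma entirely in this proof, produces a closed formula for $d_v\phi$ that could be reused elsewhere, and concentrates all the bookkeeping into one grading argument at the end --- at the cost of carrying more terms before they cancel.
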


\begin{proof}
To simplify the notation, let us write 
$$\kappa(\eToda)=(\nu(\eToda),\Adj_{\nu(\eToda)^{-1}}(\eToda))=(\nu(\eToda),\phi(\eToda))$$ 
for all $\eToda\in\mathcal{O}_{\text{Toda}}$.

Now fix $\eToda\in\mathcal{O}_{\text{Toda}}$. Recall that $T_{\eToda}\mathcal{O}_{\text{Toda}}$ is given by \eqref{Equation: Tangent space to our orbit}, while $$T_{\kappa(\eToda)}(G/Z\times S_{\text{reg}})=T_{\nu(\eToda)}(G/Z)\oplus T_{\phi(\eToda)}S_{\text{reg}}=T_{\nu(\eToda)}(G/Z)\oplus\mathrm{ker}(\adj_{\eta}).$$ The differential $d_{\eToda}\kappa:T_{\eToda}\mathcal{O}_{\text{Toda}}\rightarrow T_{\nu(\eToda)}(G/Z)\oplus\mathrm{ker}(\adj_{\eta})$ is then given by
\begin{equation*}%\label{Equation: Differential calculation} 
d_{\eToda}\kappa(\pi_{\mathfrak{b}_-}([\eToda,\eLieB]))=\big(d_{\eToda}\nu(\pi_{\mathfrak{b}_-}([\eToda,\eLieB])),d_{\eToda}\phi(\pi_{\mathfrak{b}_-}([\eToda,\eLieB]))\big)\end{equation*}
for all $\eLieB\in\mathfrak{b}$.

Given $\eLieB,\eLieB'\in\mathfrak{b}$, let us set
\begin{align*}
& u(\eLieB):= \pi_{\mathfrak{b}_-}([\eToda,\eLieB])\in T_{\eToda}\mathcal{O}_{\text{Toda}},\hspace{57pt}
u(\eLieB'):= \pi_{\mathfrak{b}_-}([\eToda,\eLieB'])\in T_{\eToda}\mathcal{O}_{\text{Toda}} \\ 
& w(\eLieB):= \eLieB_{(0)}+\eLieB_{(1)}+m_{(\geq2)}'(\eLieB_{(1)})\in\mathfrak{b}, \hspace{20pt}
w(\eLieB'):=\eLieB'_{(0)}+\eLieB'_{(1)}+m_{(\geq2)}'(\eLieB'_{(1)})\in\mathfrak{b},
\end{align*}
%\begin{align*}
%& u(\eLieB):= \pi_{\mathfrak{b}_-}([\eToda,\eLieB])\in T_{\eToda}\mathcal{O}_{\text{Toda}},\\
%& u(\eLieB'):= \pi_{\mathfrak{b}_-}([\eToda,\eLieB'])\in T_{\eToda}\mathcal{O}_{\text{Toda}}, \\
%& w(\eLieB):= \eLieB_{(0)}+\eLieB_{(1)}+m''_{(\geq2)}(\eLieB_{(1)})\in\mathfrak{b},\\
%& w(\eLieB'):=\eLieB'_{(0)}+\eLieB'_{(1)}+m''_{(\geq2)}(\eLieB'_{(1)})\in\mathfrak{b},
%\end{align*}
where $m_{(\geq2)}':\mathfrak{g}_{(1)}\rightarrow\mathfrak{g}_{(\geq 2)}$ is as given in the statement of Lemma \ref{lem: property of nu 10}. We have
\begin{align*}
& \kappa^*(\Omega_{\text{reg}})_{\eToda}(u(\eLieB),u(\eLieB')) \\
&\quad = (\Omega_{\text{reg}})_{(\nu(\eToda),\phi(\eToda))}\bigg(\bigg(d_{\eToda}\nu(u(\eLieB)),d_{\eToda}\phi(u(\eLieB))\bigg), \bigg(d_{\eToda}\nu(u(\eLieB')),d_{\eToda}\phi(u(\eLieB'))\bigg)\bigg) & %[\text{by }\eqref{Equation: Differential calculation}]
\\
&\quad = (\Omega_{\text{reg}})_{(\nu(\eToda),\phi(\eToda))} \bigg(\bigg(-d_{e} R_{\nu(\eToda)}(w(\eLieB)), d_{\eToda}\phi(u(\eLieB)) \bigg), \bigg(-d_{e} R_{\nu(\eToda)} \big(w(\eLieB')\big), d_{\eToda}\phi(u(\eLieB')) \bigg) \bigg) %& [\text{by Lemma \ref{lem: property of nu 10}}]
\\
&\quad =  -\langle \Adj_{\nu(\eToda)^{-1}}(w(\eLieB)), d_{\eToda}\phi(u(\eLieB'))\rangle + \langle \Adj_{\nu(\eToda)^{-1}}(w(\eLieB')), d_{\eToda}\phi(u(\eLieB)) \rangle \\ 
& \quad \hspace{14pt}+ \langle \phi(\eToda), 
[ \Adj_{\nu(\eToda)^{-1}}(w(\eLieB)),\Adj_{\nu(\eToda)^{-1}}(w(\eLieB')) ]\rangle\\
&\quad = - \langle \Adj_{\nu(\eToda)^{-1}}(w(\eLieB)), d_{\eToda}\phi(u(\eLieB'))\rangle + \langle \Adj_{\nu(\eToda)^{-1}}(w(\eLieB')), d_{\eToda}\phi(u(\eLieB)) \rangle + \langle\eToda, 
[ w(\eLieB),w(\eLieB') ]\rangle,
\end{align*}
where the second equality follows from Lemma \ref{lem: property of nu 10}, and the third equality follows from \eqref{Equation: Symplectic form}, \eqref{Equation: Coincident forms}, and the fact that $d_e R_{\nu(\eToda)}=d_e L_{\nu(\eToda)}\circ \Adj_{\nu(\eToda)^{-1}}$.
Note that $d_{\eToda}\phi(u(\eLieB))$ and $d_{\eToda}\phi(u(\eLieB'))$ belong to $T_{\phi(\eToda)}S_{\text{reg}}=\mathrm{ker}(\adj_{\eta})$, which in turn is contained in $\mathfrak{u}$ (by Lemma \ref{Lemma: Kernel of the nilnegative element}). It follows that $d_{\eToda}\phi(u(\eLieB))$ and $d_{\eToda}\phi(u(\eLieB'))$ are both orthogonal to $\mathfrak{b}$ with respect to the Killing form. Note also that $\nu(\eToda)\in B/Z$, so that $\Adj_{\nu(\eToda)^{-1}}(w(\eLieB)),\Adj_{\nu(\eToda)^{-1}}(w(\eLieB'))\in\mathfrak{b}$. Hence $$\langle \Adj_{\nu(\eToda)^{-1}}(w(\eLieB)), d_{\eToda}\phi(u(\eLieB'))\rangle=0=\langle \Adj_{\nu(\eToda)^{-1}}(w(\eLieB')), d_{\eToda}\phi(u(\eLieB))\rangle=0,$$ and our expression for $\kappa^*(\Omega_{\text{reg}})_{\eToda}(u(\eLieB),u(\eLieB'))$ becomes
\begin{equation}\label{Equation: New expression}\kappa^*(\Omega_{\text{reg}})_{\eToda}(u(\eLieB),u(\eLieB'))=\langle \eToda, 
[ w(\eLieB),w(\eLieB') ]\rangle.\end{equation}
Now recall the description \eqref{eq: def of toda lattice} of $\mathcal{O}_{\text{Toda}}$ as a subset of $\mathfrak{g}$, which in particular implies that $\eToda$ is Killing-orthogonal to $\mathfrak{g}_{(\geq 2)}$. This fact has the following consequence: if $z,z'\in\mathfrak{u}$ are such that $z_{(1)}=z'_{(1)}$, then $\langle \eToda,z\rangle=\langle \eToda,z'\rangle$. At the same time, $[\eLieB,\eLieB']$ and $[w(\eLieB),w(\eLieB')]$ both lie in $\mathfrak{u}$ and satisfy $[\eLieB,\eLieB']_{(1)}=[w(\eLieB),w(\eLieB')]_{(1)}$. We conclude that 
$\langle \eToda,[\eLieB,\eLieB']\rangle= \langle \eToda,[w(\eLieB),w(\eLieB')]\rangle$, which combines with \eqref{Equation: New expression} to give
$$\kappa^*(\Omega_{\text{reg}})_{\eToda}(u(\eLieB),u(\eLieB')) = \langle \eToda, [\eLieB,\eLieB']\rangle.$$
By \eqref{Equation: Modified KKS}, the right hand side of this new equation is exactly $(\omega_{\text{Toda}})_{\eToda}(u(\eLieB),u(\eLieB'))$. We have therefore established that $\kappa^*(\Omega_{\text{reg}})=\omega_{\text{Toda}}$, completing the proof.
\end{proof}

\begin{theorem}\label{Theorem: Embedding of completely integrable systems}
The map $\kappa$ is an embedding of completely integrable systems.
\end{theorem}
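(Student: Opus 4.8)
The plan is to verify the three conditions of Definition \ref{Definition: Embedding of completely integrable systems} for $\kappa$, viewed as a morphism from $\mathcal{O}_{\text{Toda}}$ (equipped with the Toda lattice $\sigma_1,\ldots,\sigma_r$) to $G/Z\times S_{\text{reg}}$ (equipped with $\tau_1,\ldots,\tau_\ell$). Two of the three conditions are already in hand: condition (i), that $\kappa$ be a locally closed immersion, is exactly Proposition \ref{Proposotion: Locally closed immersion}, and condition (ii), that $\kappa^*(\Omega_{\text{reg}})=\omega_{\text{Toda}}$, is exactly Proposition \ref{prop: kappa is symplectic}. Thus all that remains is condition (iii): that each $\sigma_i$ lie in $\mathrm{span}_{\mathbb{C}}\{\kappa^*(\tau_1),\ldots,\kappa^*(\tau_\ell)\}$.

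My first step toward (iii) is to observe that $\mu\circ\kappa$ is nothing but the inclusion $\mathcal{O}_{\text{Toda}}\hookrightarrow\mathfrak{g}$. This follows by unwinding the definitions of $\kappa$ and of the moment map \eqref{Equation: The ideal moment map}:
$$(\mu\circ\kappa)(\eToda)=\mu\big(\nu(\eToda),\Adj_{\nu(\eToda)^{-1}}(\eToda)\big)=\Adj_{\nu(\eToda)}\big(\Adj_{\nu(\eToda)^{-1}}(\eToda)\big)=\eToda,\qquad \eToda\in\mathcal{O}_{\text{Toda}}.$$
Since $\tau_k=\mu^*(f_k^\zeta)$, this identity yields $\kappa^*(\tau_k)=(\mu\circ\kappa)^*(f_k^\zeta)=f_k^\zeta\big\vert_{\mathcal{O}_{\text{Toda}}}$ for every $k$; in other words, the functions $\kappa^*(\tau_1),\ldots,\kappa^*(\tau_\ell)$ are precisely the restrictions to $\mathcal{O}_{\text{Toda}}$ of the Mishchenko--Fomenko polynomials $f_1^\zeta,\ldots,f_\ell^\zeta$.

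To conclude, I would appeal to Remark \ref{rem: decomposition}. Because $\zeta$ is regular nilpotent, that remark (taken with $\lambda=1$) gives the decomposition $f_i(x+\zeta)=\sum_{j=0}^{d_i-1}f^\zeta_{ij}(x)$, whence
$$\sigma_i=f_{i,\zeta}\big\vert_{\mathcal{O}_{\text{Toda}}}=\sum_{j=0}^{d_i-1}f^\zeta_{ij}\big\vert_{\mathcal{O}_{\text{Toda}}}.$$
As the polynomials $f^\zeta_{ij}$ are exactly those relisted as $f_1^\zeta,\ldots,f_\ell^\zeta$, the right-hand side is a $\mathbb{C}$-linear combination of the $\kappa^*(\tau_k)$, which is condition (iii). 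I do not anticipate a genuine obstacle at this stage: the conceptually and computationally demanding parts are the two cited propositions, so the crux here is simply to recognize the clean identity $\mu\circ\kappa=\mathrm{incl}$ and to marry it with the argument-shift expansion of Remark \ref{rem: decomposition}.
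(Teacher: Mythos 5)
Your proposal is correct and follows essentially the same route as the paper's own proof: reduce to condition (iii) via the two cited propositions, note that $\mu\circ\kappa$ is the inclusion $\mathcal{O}_{\text{Toda}}\hookrightarrow\mathfrak{g}$ so that $\kappa^*(\tau_j)=f_j^{\zeta}\vert_{\mathcal{O}_{\text{Toda}}}$, and then invoke Remark \ref{rem: decomposition} to write each $\sigma_i$ as a $\{0,1\}$-coefficient linear combination of these restrictions. The only difference is cosmetic ordering of the two key observations, so there is nothing to add.
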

\begin{proof}
By virtue of Propositions \ref{Proposotion: Locally closed immersion} and \ref{prop: kappa is symplectic}, we need only verify (iii) from Definition \ref{Definition: Embedding of completely integrable systems}.
 
Recall the notation used in Subsections \ref{Subsection: The Toda lattice} and \ref{Subsection: The integrable system on G/Z x S} for the Toda lattice and the completely integrable system on $G/Z\times S_{\text{reg}}$, respectively.  
Remark \ref{rem: decomposition} then gives
$$f_i(x+\zeta)=\sum_{j=0}^{d_i-1}f_{ij}^{\zeta}(x),\quad i\in\{1,\ldots,r\},\quad x\in\mathfrak{g},$$ or equivalently 
\begin{equation*}%\label{Equation: Specialized expansion}
f_{i,\zeta}=\sum_{j=0}^{d_i-1}f_{ij}^{\zeta},\quad i \in\{1,\ldots,r\}.\end{equation*} 
Note that our enumeration of the $f_{ij}^{\zeta}$ as $f_1^{\zeta},\ldots,f_{\ell}^{\zeta}$ allows us to write this as 
$$f_{i,\zeta}=\sum_{j=1}^{\ell}c_{ij}f_{j}^{\zeta},\quad i \in\{1,\ldots,r\},$$
where each coefficient $c_{ij}$ is $0$ or $1$. Restricting both sides to $\mathcal{O}_{\text{Toda}}$, we obtain
\begin{equation}\label{Equation: Second specialized expression}\sigma_i=\sum_{j=1}^{\ell}c_{ij}f_{j}^{\zeta}\vert_{\mathcal{O}_{\text{Toda}}},\quad i\in\{1,\ldots,r\}.\end{equation}
Now observe that $\mu\circ\kappa$ is the inclusion $\mathcal{O}_{\text{Toda}}\hookrightarrow\mathfrak{g}$, so that the corresponding pullback map $(\mu\circ\kappa)^*:\mathbb{C}[\mathfrak{g}]\rightarrow\mathbb{C}[\mathcal{O}_{\text{Toda}}]$ sends each $f\in\mathbb{C}[\mathfrak{g}]$ to the restricted function $f\vert_{\mathcal{O}_{\text{Toda}}}\in\mathbb{C}[\mathcal{O}_{\text{Toda}}]$. We may therefore write \eqref{Equation: Second specialized expression} as $$\sigma_i = \sum_{j=1}^{\ell}c_{ij}(\mu\circ\kappa)^*(f_j^{\zeta})=\sum_{j=1}^{\ell}c_{ij}\kappa^*(\tau_j),\quad i\in\{1,\ldots,r\}.$$
In particular, we have shown that (iii) from Definition \ref{Definition: Embedding of completely integrable systems} holds in our context.
\end{proof}

%\begin{remark}Note that \eqref{eq: property of theta 10} shows that for $v\in Y^*$ and $t\in T$ we have $t*v=\text{\rm Ad}_{t}(v)$. This and Lemma $\ref{lem: T-equivariance of theta}$ show that the embedding $\kappa\colon Y^*\hookrightarrow G/Z\times S_{\text{reg}}$ is $T$-equivariant with respect to the inversion homomorphism $T\rightarrow T$ given by $t\mapsto t^{-1}$. However, it seems like it is not $B$-equivariant from the computations in type $A_2$.\end{remark}

\section{Poisson geometry on $\Fam{H_0}$}\label{Section: Poisson geometry on X(H_0)}
We now formally introduce and study $\Fam{H_0}$, the (total space of the) family of Hessenberg varieties mentioned in the introduction to our paper. Among other things, this section develops some geometric features of $\Fam{H_0}$ that are relevant to answering the motivating question from Subsection \ref{Subsection: Motivation and context}. 

\subsection{Hessenberg varieties in general}\label{Subsection: Hessenberg varieties in general}
A \textit{Hessenberg subspace} is a $\mathfrak{b}$-submodule $H\subseteq \mathfrak{g}$ containing $\mathfrak{b}$. Note that $H$ is then a $B$-submodule of $\mathfrak{g}$, thereby determining a $G$-equivariant vector bundle $\pi_H:G\times_B H\rightarrow G/B$. 
The total space of this bundle $X(H):=G\times_B H$ is the quotient of $G\times H$ by the following $B$-action:
%The total space of this bundle, hereafter denoted $X(H)$, is the quotient of $G\times H$ by the following $B$-action:
$$b\cdot (g,x):=(gb^{-1},\Adj_b(x)),\quad b\in B,\text{ }(g,x)\in G\times H.$$ 
We then have the well-defined, surjective morphism 
$$\mu_H:X(H)\rightarrow\mathfrak{g},\quad [(g,x)]\mapsto\Adj_g(x),\quad [(g,x)]\in X(H).$$ The \textit{Hessenberg variety} associated to $H$ and a point $x\in\mathfrak{g}$ shall be denoted $X(x,H)$ and defined by
$$X(x,H):=\mu_H^{-1}(x).$$
One therefore calls $\mu_H:X(H)\rightarrow\mathfrak{g}$ the \textit{family of all Hessenberg varieties associated to} $H$. Note that $G$ acts on the total space $X(H)$ via
\begin{equation}\label{Equation: G-action on X(H)}
h\cdot [(g,x)]:=[(hg,x)],\quad h\in G,\text{ }[(g,x)]\in X(H),
\end{equation}
so that $\mu_H$ is a $G$-equivariant map.

\begin{remark}\label{Remark: Common Hessenberg}
Given $x\in\mathfrak{g}$, it is straightforward to check that $\pi_H:X(H)\rightarrow G/B$ restricts to an isomorphism between
$X(x,H)$ and the following closed subvariety of $G/B$:
\begin{equation}\label{Equation: Common Hessenberg}\{gB\in G/B:\Adj_{g^{-1}}(x)\in H\}.\end{equation} The research literature often takes this closed subvariety as the definition of $X(x,H)$, in contrast to our convention (cf.\ \cite{DeMariProcesiShayman,Precup}).
\end{remark}

In what follows, we restrict our attention to the Hessenberg subspace
\begin{align}\label{eq: def of special Hessenberg}
 H_0:=\mathfrak{b}\oplus \bigoplus_{\alpha\in\Pi} \mathfrak{g}_{-\alpha}
\end{align}
and family $\mu_{0}:=\mu_{H_0}:X(H_0)\rightarrow\mathfrak{g}$.

\subsection{A Poisson structure on $\Fam{H_0}$}\label{Subsection: A symplectic structure}
Note that \eqref{Equation: Analogue of right cotangent lift} restricts to a Hamiltonian action of $B$ on $G\times\mathfrak{g}$, with moment map $\rho:G\times\mathfrak{g}\rightarrow\mathfrak{b}^*$ obtained by composing (the $\mathfrak{g}^*$-valued version of) $\mu_R$ with the restriction map $\mathfrak{g}^*\rightarrow\mathfrak{b}^*$. At the same time, we have the identifications \eqref{Equation: Killing isomorphism} of $\mathfrak{g}^*$ with $\mathfrak{g}$ and \eqref{Equation: Isomorphism of opposite Borel subalgebras} of $\mathfrak{b}^*$ with $\mathfrak{b}_{-}$. The restriction map $\mathfrak{g}^*\rightarrow\mathfrak{b}^*$ then corresponds to the natural projection $\pi_{\mathfrak{b}_{-}}:\mathfrak{g}=\mathfrak{b}_{-}\oplus\mathfrak{u}\rightarrow\mathfrak{b}_{-}$, so that $\rho$ becomes $\pi_{\mathfrak{b}_{-}}\circ\mu_R:G\times\mathfrak{g}\rightarrow\mathfrak{b}_{-}$, i.e. 
\begin{equation*}%\label{Equation: B-moment map}
\rho(g,x)=-\pi_{\mathfrak{b}_{-}}(x),
%-\langle x, \cdot \rangle\vert_{\mathfrak{b}}\in\mathfrak{b}^*,
\quad (g,x)\in G\times\mathfrak{g}.
%\mu_{R,B}(g,x)=-x^{\vee}\vert_{\mathfrak{b}}\in\mathfrak{b}^*,\quad (g,x)\in G\times\mathfrak{g}.
\end{equation*} 
%Now recall the $B$-equiariant isomorphism \eqref{Equation: Isomorphism of opposite Borel subalgebras} between $\mathfrak{b}^*$ and the opposite Borel subalgebra $\mathfrak{b}_{-}$, where the latter subalgebra is endowed with the $B$-action \eqref{Equation: Borel action}. Accordingly, one may present $\mu_{R,B}$ as the $\mathfrak{b}_{-}$-valued map
%\begin{equation}\label{Equation: New moment map}\rho:=\left(G\times\mathfrak{g}\xrightarrow{\mu_{R,B}}\mathfrak{b}^*\xrightarrow{\vartheta^{-1}}\mathfrak{b}_{-}\right).\end{equation} 
Note that $\rho$ is $B$-equivariant for the actions \eqref{Equation: Analogue of right cotangent lift} on $G\times\mathfrak{g}$ and \eqref{Equation: Borel action} on $\mathfrak{b}_{-}$. By virtue of these last two sentences, the following is immediate.

\begin{lemma}\label{Lemma: Preimage of a subset}
If $A$ is any subset of $\mathfrak{b}_{-}$, then \begin{equation*}%\label{Equation: Preimage formula}
\rho^{-1}(A)=G\times \big((-A)+\mathfrak{u}\big).\end{equation*} If $A$ is also $B$-invariant, then $\rho^{-1}(A)$ is a $B$-invariant subset of $G\times\mathfrak{g}$ with respect to the action \eqref{Equation: Analogue of right cotangent lift}.
\end{lemma}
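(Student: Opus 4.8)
The plan is to treat the two assertions separately, since both reduce to routine manipulations once the defining formula $\rho(g,x) = -\pi_{\mathfrak{b}_{-}}(x)$ is in hand.

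For the first claim, I would simply unwind the definition of the preimage. A point $(g,x)$ lies in $\rho^{-1}(A)$ precisely when $-\pi_{\mathfrak{b}_{-}}(x)\in A$, i.e.\ when $\pi_{\mathfrak{b}_{-}}(x)\in -A$. Decomposing $x = \pi_{\mathfrak{b}_{-}}(x) + \pi_{\mathfrak{u}}(x)$ according to $\mathfrak{g} = \mathfrak{b}_{-}\oplus\mathfrak{u}$, this condition imposes no restriction on $g$ or on the $\mathfrak{u}$-component of $x$, and is equivalent to $x\in(-A)+\mathfrak{u}$. This yields $\rho^{-1}(A)=G\times\big((-A)+\mathfrak{u}\big)$ directly.

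For the second claim, I would invoke the $B$-equivariance of $\rho$ noted just before the statement, namely $\rho(b\cdot(g,x)) = b\ast\rho(g,x)$ for the actions \eqref{Equation: Analogue of right cotangent lift} and \eqref{Equation: Borel action}. Granting this, the $B$-invariance of $\rho^{-1}(A)$ is the general fact that the preimage of an invariant set under an equivariant map is invariant: if $(g,x)\in\rho^{-1}(A)$ and $b\in B$, then $\rho(b\cdot(g,x)) = b\ast\rho(g,x)\in A$ because $\rho(g,x)\in A$ and $A$ is $B$-invariant, whence $b\cdot(g,x)\in\rho^{-1}(A)$.

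There is essentially no obstacle here; the only point worth a line of verification is the equivariance itself, which holds because $\mathfrak{u}$ is $\Adj$-stable under $B$. Concretely, for $b\in B$ one has $\Adj_b(\pi_{\mathfrak{u}}(x))\in\mathfrak{u}$, so that $\pi_{\mathfrak{b}_{-}}(\Adj_b(x)) = \pi_{\mathfrak{b}_{-}}(\Adj_b(\pi_{\mathfrak{b}_{-}}(x)))$; comparing with the definition $b\ast a = \pi_{\mathfrak{b}_{-}}(\Adj_b(a))$ then gives $\rho(b\cdot(g,x)) = b\ast\rho(g,x)$. With this in place, both parts of the lemma follow immediately, matching the paper's assertion that the statement is immediate.
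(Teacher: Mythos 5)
Your proposal is correct and follows exactly the route the paper intends: the paper states the lemma is immediate from the formula $\rho(g,x)=-\pi_{\mathfrak{b}_{-}}(x)$ and the $B$-equivariance of $\rho$, and you unwind precisely those two facts. The only (harmless) difference is that the paper obtains the equivariance for free from moment-map theory, whereas you verify it directly via the $\Adj_B$-stability of $\mathfrak{u}$ — both are fine.
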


This result affords us a moment map-theoretic description of $\Fam{H_0}$. To obtain it, recall the $B$-invariant subspace $V_{\text{Toda}}\subseteq\mathfrak{b}_{-}$ defined in \eqref{Equation: Invariant subspace} and observe that $H_0=V_{\text{Toda}}\oplus\mathfrak{u}$. An application of Lemma \ref{Lemma: Preimage of a subset} then produces the $B$-invariant subvariety $\rho^{-1}(V_{\text{Toda}}) = G\times H_0$. In particular,
\begin{equation}\label{Equation: First quotient}\rho^{-1}(V_{\text{Toda}})/B=\Fam{H_0}.\end{equation}

Now let $A$ be any $B$-invariant subset of $V_{\text{Toda}}$. Lemma \ref{Lemma: Preimage of a subset} implies that $\rho^{-1}(A)$ is a $B$-invariant subset of $G\times\mathfrak{g}$, so that we may consider the quotient set
\begin{equation}\label{Equation: Arbitrary quotient}\rho^{-1}(A)/B=G\times_B \big((-A)+\mathfrak{u}\big)=:\Upsilon(A).\end{equation} A comparison of \eqref{Equation: First quotient} and \eqref{Equation: Arbitrary quotient} reveals that $\Upsilon(A)$ is naturally a subset of $\Fam{H_0}$. 

We will benefit from studying $\Upsilon(A)$ when $A$ is a $B$-orbit in $V_{\text{Toda}}$. Accordingly, recall the set $\mathcal{I}$ used to index the $B$-orbits in $V_{\text{Toda}}$ (see Subsection \ref{Subsection: The B-orbit stratification of V}). It follows that the sets $\rho^{-1}(\mathcal{O}_{(S,\eLieT)})$ are $B$-invariant (by Lemma \ref{Lemma: Preimage of a subset}) and disjoint, and that their union is $\rho^{-1}(V_{\text{Toda}})=G\times H_0$. We conclude that $\Fam{H_0}$ is a disjoint union of the quotients $\rho^{-1}(\mathcal{O}_{(S,\eLieT)})/B=\Upsilon(\mathcal{O}_{(S,\eLieT)})$, i.e.
\begin{align}\label{eq:decomposition of the family}
\Fam{H_0}=\bigsqcup_{(S,\eLieT)\in\mathcal{I}}\Upsilon(\mathcal{O}_{(S,\eLieT)}).
\end{align} 
This turns out to be an instance of a richer fact --- the $\Upsilon(\mathcal{O}_{(S,\eLieT)})$ form an algebraic, $G$-equivariant stratification of $\Fam{H_0}$. Lemma \ref{Lemma: G-invariance of Upsilon}, Lemma \ref{Lemma: Closure identity}, Corollary \ref{Corollary: Closure order},  and Proposition \ref{Proposition: Structure of strata} are intended to make this precise. 

\begin{lemma}\label{Lemma: G-invariance of Upsilon}
Each subset $\Upsilon(\mathcal{O}_{(S,\eLieT)})$ is invariant under the $G$-action \eqref{Equation: G-action on X(H)} on $\Fam{H_0}$.
\end{lemma}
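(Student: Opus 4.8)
The plan is to trace the $G$-action on $\Fam{H_0}$ back to the ambient space $G\times\mathfrak{g}$ and exploit the fact that $\rho$ only sees the second coordinate. Concretely, the $G$-action \eqref{Equation: G-action on X(H)} on $\Fam{H_0}$ is induced by the left-multiplication action \eqref{Equation: Left action on first factor}, namely $h\cdot(g,x)=(hg,x)$, on $G\times\mathfrak{g}$. This action commutes with the $B$-action $b\cdot(g,x)=(gb^{-1},\Adj_b(x))$ used to form the quotient $\Fam{H_0}=\rho^{-1}(V_{\text{Toda}})/B$, since left and right multiplication on $G$ commute while $\Adj_b$ affects only the second factor. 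This commutation is precisely what allows \eqref{Equation: Left action on first factor} to descend to \eqref{Equation: G-action on X(H)} on the quotient, so I would begin by recording it.

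Next I would observe that $\rho(g,x)=-\pi_{\mathfrak{b}_-}(x)$ depends only on the second coordinate, whence $\rho(hg,x)=\rho(g,x)$ for all $h\in G$; that is, $\rho$ is constant along the orbits of \eqref{Equation: Left action on first factor}. It follows immediately that for any subset $A\subseteq\mathfrak{b}_-$, and in particular for $A=\mathcal{O}_{(S,\eLieT)}$, the preimage $\rho^{-1}(\mathcal{O}_{(S,\eLieT)})$ is invariant under the $G$-action \eqref{Equation: Left action on first factor} on $G\times\mathfrak{g}$.

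Finally, because this $G$-action commutes with the defining $B$-action, the $G$-invariant subset $\rho^{-1}(\mathcal{O}_{(S,\eLieT)})$ descends to a $G$-invariant subset of the quotient $\Fam{H_0}$, which by \eqref{Equation: Arbitrary quotient} is exactly $\Upsilon(\mathcal{O}_{(S,\eLieT)})$. This completes the argument. I do not anticipate any genuine obstacle: the only point requiring care is the bookkeeping that confirms \eqref{Equation: G-action on X(H)} is induced by \eqref{Equation: Left action on first factor} and that the two actions commute, after which the invariance of $\rho$ under left multiplication does all of the work.
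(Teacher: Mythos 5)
Your proof is correct and is essentially the paper's argument: the paper disposes of the lemma by citing \eqref{Equation: Arbitrary quotient}, whose content is exactly what you unpack, namely that $\Upsilon(\mathcal{O}_{(S,\eLieT)})$ is the image in $\Fam{H_0}$ of a subset of $G\times\mathfrak{g}$ cut out by a condition on the second coordinate alone, which the left-multiplication action \eqref{Equation: Left action on first factor} never touches. Your write-up merely spells out the descent bookkeeping that the paper labels ``immediate.''
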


\begin{proof}
This is an immediate consequence of \eqref{Equation: Arbitrary quotient}.
\end{proof}

\begin{lemma}\label{Lemma: Closure identity}
If $(S,\eLieT)\in\mathcal{I}$, then $\overline{\Upsilon(\mathcal{O}_{(S,\eLieT)})}=\Upsilon(\overline{\mathcal{O}_{(S,\eLieT)}})$.
\end{lemma}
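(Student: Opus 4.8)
The plan is to realize both sides of the asserted identity as images of closures under the principal $B$-bundle projection $q:G\times H_0\to X(H_0)$, and then to reduce the whole statement to the corresponding elementary identity for the moment map $\rho$. Abbreviate $\mathcal{O}:=\mathcal{O}_{(S,\eLieT)}$, and recall from \eqref{Equation: First quotient} that $q$ is the quotient of $G\times H_0=\rho^{-1}(V_{\text{Toda}})$ by the $B$-action \eqref{Equation: Analogue of right cotangent lift}. For any $B$-invariant subset $A\subseteq V_{\text{Toda}}$, Lemma \ref{Lemma: Preimage of a subset} shows $\rho^{-1}(A)=G\times((-A)+\mathfrak{u})$ is $B$-saturated and contained in $G\times H_0$, so that \eqref{Equation: Arbitrary quotient} yields both $q(\rho^{-1}(A))=\Upsilon(A)$ and $q^{-1}(\Upsilon(A))=\rho^{-1}(A)$.

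First I would record two general facts. Since $B$ is a connected solvable (hence special) group, $G\to G/B$ is a Zariski-locally trivial principal $B$-bundle, and therefore so is $q$; in particular $q$ is open, surjective, and continuous. For any open, surjective, continuous map $q$ and any subset $C$ of its target one has $q^{-1}(\overline{C})=\overline{q^{-1}(C)}$: the inclusion $\supseteq$ holds because $q^{-1}(\overline C)$ is closed and contains $q^{-1}(C)$, while $\subseteq$ follows because openness of $q$ sends any neighbourhood of a point of $q^{-1}(\overline C)$ onto a neighbourhood of a point of $\overline C$, which must meet $C$. Applying this with $C=\Upsilon(\mathcal{O})$ gives $q^{-1}\big(\overline{\Upsilon(\mathcal{O})}\big)=\overline{q^{-1}(\Upsilon(\mathcal{O}))}=\overline{\rho^{-1}(\mathcal{O})}$, the closure being taken in $G\times H_0$.

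Next I would compute $\overline{\rho^{-1}(\mathcal{O})}$ directly. Using Lemma \ref{Lemma: Preimage of a subset} and the decomposition $H_0=V_{\text{Toda}}\oplus\mathfrak{u}$, the identification $G\times H_0\cong G\times V_{\text{Toda}}\times\mathfrak{u}$ carries $\rho^{-1}(\mathcal{O})$ to the product $G\times(-\mathcal{O})\times\mathfrak{u}$. Since the Zariski closure of a product of subsets is the product of their closures, and since negation is a homeomorphism of $V_{\text{Toda}}$, this gives $\overline{\rho^{-1}(\mathcal{O})}=G\times(-\overline{\mathcal{O}})\times\mathfrak{u}=\rho^{-1}(\overline{\mathcal{O}})$, where $\overline{\mathcal{O}}\subseteq V_{\text{Toda}}$ (as $V_{\text{Toda}}$ is closed) and $\overline{\mathcal{O}}$ is $B$-invariant, so that $\Upsilon(\overline{\mathcal{O}})$ is defined and $q^{-1}(\Upsilon(\overline{\mathcal{O}}))=\rho^{-1}(\overline{\mathcal{O}})$. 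Note that this step needs no explicit knowledge of the orbit-closure order.

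Combining the two previous paragraphs gives $q^{-1}\big(\overline{\Upsilon(\mathcal{O})}\big)=\rho^{-1}(\overline{\mathcal{O}})=q^{-1}\big(\Upsilon(\overline{\mathcal{O}})\big)$; applying $q$ and using that $q(q^{-1}(C))=C$ for surjective $q$ yields $\overline{\Upsilon(\mathcal{O})}=\Upsilon(\overline{\mathcal{O}})$, as desired. I expect the only genuinely delicate point to be the openness and local triviality of $q$ together with the open-map closure identity $q^{-1}(\overline C)=\overline{q^{-1}(C)}$; I would isolate this as the one nontrivial input, since the product-closure computation and the cancellation of $q$ against $q^{-1}$ are routine.
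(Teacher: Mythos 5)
Your proof is correct, and it transfers the closure computation between the total space and the quotient by a mechanism genuinely different from the paper's. Both arguments hinge on the same upstairs identity $\overline{\rho^{-1}(\mathcal{O}_{(S,\eLieT)})}=\rho^{-1}(\overline{\mathcal{O}_{(S,\eLieT)}})$, which you and the paper each get from the product description $\rho^{-1}(A)=G\times\big((-A)+\mathfrak{u}\big)$ of Lemma \ref{Lemma: Preimage of a subset}. The difference lies in the descent step. The paper proves the two inclusions asymmetrically: it invokes the geometric-quotient machinery of Tauvel--Yu (their Proposition 25.3.5 and Lemma 25.3.2) to conclude that $\pi$ carries $B$-invariant closed sets to closed sets, which gives $\overline{\Upsilon(\mathcal{O}_{(S,\eLieT)})}\subseteq\Upsilon(\overline{\mathcal{O}_{(S,\eLieT)}})$, and then uses mere continuity of $\pi$ for the reverse inclusion. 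You instead establish that $q$ is an \emph{open} map --- via Zariski-local triviality of the principal $B$-bundle, justified by specialness of the solvable group $B$ (equivalently, by local sections of $G\rightarrow G/B$ coming from the Bruhat decomposition) --- and then apply the purely topological identity $q^{-1}(\overline{C})=\overline{q^{-1}(C)}$, valid for any continuous open map, which yields both inclusions simultaneously before cancelling $q\circ q^{-1}$ by surjectivity and saturation. What each approach buys: the paper's route stays within quotient-theoretic results it cites anyway (and reuses in Proposition \ref{Proposition: Structure of strata}), so no bundle-theoretic input is needed; yours replaces that citation with an elementary point-set argument whose only geometric input, openness of $q$, is itself a fact the paper effectively relies on later (openness of $\pi'$ in Proposition \ref{Proposition: Structure of strata}). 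Two small points worth making explicit if you write this up: the claim that Zariski closure of a product is the product of the closures deserves the one-line slicing argument (the Zariski topology on a product is not the product topology, though the identity still holds), and the passage from local triviality of $G\rightarrow G/B$ to local triviality of $q$ uses the shear automorphism $(u,b,x)\mapsto(u,b,\Adj_b(x))$ over a trivializing open set.
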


\begin{proof}
Note that $G\times H_0$ is irreducible, $\Fam{H_0}$ is smooth, and the quotient map $\pi:G\times H_0\rightarrow \Fam{H_0}$  is surjective with fibres the $B$-orbits in $G\times H_0$. These observations allow one to apply \cite[Proposition 25.3.5]{Tauvel} and conclude that $\pi$ is the geometric quotient of $G\times H_0$ by $B$ (see \cite[Definition 25.3.1]{Tauvel}). It now follows from \cite[Lemma 25.3.2]{Tauvel} that $\pi$ sends $B$-invariant closed subsets of $G\times H_0$ to closed subsets of $\Fam{H_0}$. An example of the former subset is $\rho^{-1}(\overline{\mathcal{O}_{(S,\eLieT)}})$, so that
$$\pi(\rho^{-1}(\overline{\mathcal{O}_{(S,\eLieT)}}))=\Upsilon(\overline{\mathcal{O}_{(S,\eLieT)}})$$ is necessarily a closed subset of $\Fam{H_0}$. Since $\Upsilon(\mathcal{O}_{(S,\eLieT)})\subseteq \Upsilon(\overline{\mathcal{O}_{(S,\eLieT)}})$, this fact implies that $\overline{\Upsilon(\mathcal{O}_{(S,\eLieT)})}\subseteq \Upsilon(\overline{\mathcal{O}_{(S,\eLieT)}})$. 

To obtain the remaining inclusion, note that $\pi$ is continous and therefore satisfies \begin{equation}\label{Equation: Closure formula}\pi\big(\overline{\rho^{-1}(\mathcal{O}_{(S,\eLieT)})}\big)\subseteq\overline{\pi(\rho^{-1}(\mathcal{O}_{(S,\eLieT)}))}.\end{equation} 
It is a straightforward consequence of Lemma \ref{Lemma: Preimage of a subset} that $\overline{\rho^{-1}(\mathcal{O}_{(S,\eLieT)})}=\rho^{-1}(\overline{\mathcal{O}_{(S,\eLieT)}})$. This implies that the left hand side of \eqref{Equation: Closure formula} is exactly $\Upsilon(\overline{\mathcal{O}_{(S,\eLieT)}})$.
%It is a straightforward consequence of Lemma \ref{Lemma: Preimage of a subset} that $\overline{\rho^{-1}(\mathcal{O}_{(S,\eLieT)})}=\rho^{-1}(\overline{\mathcal{O}_{(S,\eLieT)}})$ (\textcolor{red}{this is true but needs explanation}), implying that the left hand side of \eqref{Equation: Closure formula} is exactly $\Upsilon(\overline{\mathcal{O}_{(S,\eLieT)}})$. 
The right hand side is $\overline{\Upsilon(\mathcal{O}_{(S,\eLieT)})}$, so that we have verified $\Upsilon(\overline{\mathcal{O}_{(S,\eLieT)}})\subseteq \overline{\Upsilon(\mathcal{O}_{(S,\eLieT)})}$. This completes the proof. 
\end{proof}

Once combined with Proposition \ref{Proposition: Closure order}, this lemma immediately implies the following result. 

\begin{corollary}\label{Corollary: Closure order}
We have $$\overline{\Upsilon(\mathcal{O}_{(S,\eLieT)})}=\bigsqcup_{(S',\eLieT')\leq (S,\eLieT)}\Upsilon(\mathcal{O}_{(S',\eLieT')})$$ for all $(S,\eLieT)\in\mathcal{I}$, where $\leq$ is the partial order on $\mathcal{I}$ defined in \eqref{Equation: Partial order}.
\end{corollary}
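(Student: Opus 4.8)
The plan is to deduce the statement by chaining together Lemma \ref{Lemma: Closure identity}, the orbit stratification of $V_{\text{Toda}}$, and the closure-order description from Proposition \ref{Proposition: Closure order}. First I would invoke Lemma \ref{Lemma: Closure identity} to replace the left-hand side with $\Upsilon(\overline{\mathcal{O}_{(S,\eLieT)}})$, thereby reducing the problem to understanding $\overline{\mathcal{O}_{(S,\eLieT)}}$ as a $B$-invariant closed subset of $V_{\text{Toda}}$ and tracking how $\Upsilon$ acts on it.

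Next I would observe that, since the $B$-orbits partition $V_{\text{Toda}}$ (Proposition \ref{Proposition: Bijection with I}) and $\overline{\mathcal{O}_{(S,\eLieT)}}$ is $B$-invariant and closed, this closure is precisely the disjoint union of those $B$-orbits that it contains. By Proposition \ref{Proposition: Closure order}, the orbits contained in $\overline{\mathcal{O}_{(S,\eLieT)}}$ are exactly the $\mathcal{O}_{(S',\eLieT')}$ with $S'\subseteq S$ and $\eLieT'-\eLieT\in\bigoplus_{\alpha\in S\setminus S'}\mathbb{C}h_{\alpha}$, i.e. those satisfying $(S',\eLieT')\leq (S,\eLieT)$ in the partial order \eqref{Equation: Partial order}. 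Hence
$$\overline{\mathcal{O}_{(S,\eLieT)}}=\bigsqcup_{(S',\eLieT')\leq(S,\eLieT)}\mathcal{O}_{(S',\eLieT')}.$$

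Finally I would verify that $\Upsilon$ carries this disjoint union into the asserted one. Since $\rho^{-1}$ manifestly preserves disjoint unions and each $\rho^{-1}(\mathcal{O}_{(S',\eLieT')})$ is $B$-invariant by Lemma \ref{Lemma: Preimage of a subset}, passing to the quotient by $B$ via \eqref{Equation: Arbitrary quotient} yields
$$\Upsilon\big(\overline{\mathcal{O}_{(S,\eLieT)}}\big)=\bigsqcup_{(S',\eLieT')\leq(S,\eLieT)}\Upsilon(\mathcal{O}_{(S',\eLieT')}),$$
which combined with the first step completes the argument.

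The argument is essentially bookkeeping, so I do not anticipate a genuine obstacle; the single point meriting care is confirming that set-theoretic disjointness survives the quotient by $B$. This holds precisely because the strata $\rho^{-1}(\mathcal{O}_{(S',\eLieT')})$ are themselves pairwise disjoint and $B$-invariant, so their images under the quotient map remain pairwise disjoint and the union on the right-hand side is genuinely a disjoint one.
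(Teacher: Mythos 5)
Your proof is correct and follows essentially the same route as the paper: the paper obtains this corollary as an immediate consequence of Lemma \ref{Lemma: Closure identity} combined with Proposition \ref{Proposition: Closure order}, which is precisely the chain you spell out. The details you add explicitly --- that the $B$-invariant closed set $\overline{\mathcal{O}_{(S,\eLieT)}}$ is the disjoint union of the orbits it contains, and that $\Upsilon$ preserves disjoint unions because the $B$-invariant preimages under $\rho$ remain disjoint after quotienting --- are exactly the bookkeeping the paper leaves implicit.
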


\begin{proposition}\label{Proposition: Structure of strata}
Each subset $\Upsilon(\mathcal{O}_{(S,\eLieT)})$ is a smooth, locally closed subvariety of $X(H_0)$.
\end{proposition}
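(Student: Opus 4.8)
The plan is to realize $\Upsilon(\mathcal{O}_{(S,\eLieT)})$ as an associated fibre bundle over $G/B$ whose fibre is a smooth, locally closed subvariety of $H_0$, and then to deduce both assertions fibrewise by invoking local triviality. First I would record that the $B$-orbit $\mathcal{O}_{(S,\eLieT)}$ is itself smooth and locally closed in $V_{\text{Toda}}$. This is immediate from the explicit description \eqref{Second combinatorial orbit description}: $\mathcal{O}_{(S,\eLieT)}$ is the intersection of the affine subspace $\eLieT + \bigoplus_{\alpha\in S}\mathbb{C}h_{\alpha}\oplus\bigoplus_{\alpha\in S}\mathfrak{g}_{-\alpha}$ (a closed condition) with the open locus on which all $\mathfrak{g}_{-\alpha}$-components, $\alpha\in S$, are nonzero, and it is isomorphic to $\mathbb{C}^{|S|}\times(\mathbb{C}^{\times})^{|S|}$. (Alternatively, one may simply invoke the general fact that orbits of algebraic group actions are smooth and locally closed.)

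Next I would pass from the orbit to the fibre. Writing $W:=(-\mathcal{O}_{(S,\eLieT)})+\mathfrak{u}\subseteq H_0$, the identity \eqref{Equation: Arbitrary quotient} gives $\Upsilon(\mathcal{O}_{(S,\eLieT)})=G\times_B W$, while Lemma \ref{Lemma: Preimage of a subset} ensures that $G\times W=\rho^{-1}(\mathcal{O}_{(S,\eLieT)})$ is $B$-invariant, forcing $W$ to be $\Adj_B$-invariant; hence $G\times_B W$ is a well-defined subvariety of $X(H_0)=G\times_B H_0$. Since $x\mapsto -x$ is an automorphism of $V_{\text{Toda}}$ and $H_0=V_{\text{Toda}}\oplus\mathfrak{u}$, the set $W$ is the preimage of the smooth, locally closed subvariety $-\mathcal{O}_{(S,\eLieT)}$ under the linear projection $H_0\to V_{\text{Toda}}$ with kernel $\mathfrak{u}$. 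As this projection is a trivial affine bundle, $W\cong(-\mathcal{O}_{(S,\eLieT)})\times\mathfrak{u}$ is a smooth, locally closed, $B$-invariant subvariety of $H_0$.

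Finally I would globalize using local triviality of $G\to G/B$. Because $B$ is a connected solvable linear algebraic group, it is \emph{special}, so $G\to G/B$ is a Zariski-locally trivial principal $B$-bundle; consequently the vector bundle $X(H_0)\to G/B$ is Zariski-locally trivial, and over any trivializing open $\mathcal{U}\subseteq G/B$ one obtains compatible isomorphisms $X(H_0)|_{\mathcal{U}}\cong\mathcal{U}\times H_0$ and $\Upsilon(\mathcal{O}_{(S,\eLieT)})|_{\mathcal{U}}\cong\mathcal{U}\times W$, under which the inclusion becomes $\mathrm{id}_{\mathcal{U}}\times(W\hookrightarrow H_0)$. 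Over each such $\mathcal{U}$, the subset $\mathcal{U}\times W$ is smooth (a product of smooth varieties) and locally closed in $\mathcal{U}\times H_0$ (a product of $\mathcal{U}$ with the locally closed inclusion $W\hookrightarrow H_0$). Since both smoothness and local closedness are local properties on $X(H_0)$, and the opens $X(H_0)|_{\mathcal{U}}$ cover $X(H_0)$, this yields the claim. The only genuinely delicate points are the appeal to the specialness of $B$ (equivalently, that $G\to G/B$ admits Zariski-local sections) and the observation that local closedness may be checked locally on the ambient variety; everything else is formal. A self-contained alternative avoiding specialness would instead note that the quotient map $\pi\colon G\times H_0\to X(H_0)$ is a faithfully flat, open principal $B$-bundle (as established in the proof of Lemma \ref{Lemma: Closure identity}) and descend the smoothness and local closedness of $\pi^{-1}(\Upsilon(\mathcal{O}_{(S,\eLieT)}))=G\times W$ along $\pi$.
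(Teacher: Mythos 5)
Your proof is correct, and it is organized differently from the paper's. The paper works with the closure first: Lemma \ref{Lemma: Closure identity} shows that $\Upsilon(\overline{\mathcal{O}_{(S,\eLieT)}})$ is closed in $\Fam{H_0}$; this closure is then exhibited as a Zariski-locally trivial fibre bundle over $G/B$ with smooth fibre $(-\overline{\mathcal{O}_{(S,\eLieT)}})+\mathfrak{u}$ (by \eqref{Second combinatorial orbit description} the orbit closure is an affine-linear subspace), hence is a smooth closed subvariety; finally, \cite[Proposition 25.3.5]{Tauvel} is invoked to conclude that the quotient map $G\times\big((-\overline{\mathcal{O}_{(S,\eLieT)}})+\mathfrak{u}\big)\rightarrow\Upsilon(\overline{\mathcal{O}_{(S,\eLieT)}})$ is a geometric quotient, hence open, so that $\Upsilon(\mathcal{O}_{(S,\eLieT)})$ is open in $\Upsilon(\overline{\mathcal{O}_{(S,\eLieT)}})$; both claims of the proposition then follow from openness in a smooth closed subvariety. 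You bypass the closure and the geometric-quotient machinery entirely: you verify that the fibre $W=(-\mathcal{O}_{(S,\eLieT)})+\mathfrak{u}$ is a smooth, locally closed, $\Adj_B$-invariant subvariety of $H_0$ (correctly extracting the $B$-invariance from Lemma \ref{Lemma: Preimage of a subset}), and then check both properties of $\Upsilon(\mathcal{O}_{(S,\eLieT)})=G\times_B W$ over a trivializing Zariski-open cover of $G/B$, using that smoothness and local closedness are local on the ambient variety. Both proofs ultimately rest on Zariski-local triviality of $G\rightarrow G/B$ (the paper asserts it when calling the bundle over $G/B$ Zariski-locally trivial; you justify it via specialness of the connected solvable group $B$), but yours is more self-contained, needing neither Lemma \ref{Lemma: Closure identity} nor the citations to Tauvel--Yu; the modest price is the point you rightly flag, that local closedness may be verified on an open cover of the ambient space, which is standard. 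What the paper's arrangement buys in exchange is the auxiliary fact that each closure $\Upsilon(\overline{\mathcal{O}_{(S,\eLieT)}})$ is itself a smooth closed subvariety of $\Fam{H_0}$, which sits naturally beside the closure-order description of Corollary \ref{Corollary: Closure order}. Your closing alternative (descending along the quotient map $\pi$) would additionally require that a geometric quotient map is open and induces the quotient topology before local closedness descends; since your main argument does not use this aside, it does not affect correctness.
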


\begin{proof}
Since $\Upsilon(\overline{\mathcal{O}_{(S,\eLieT)}})$ is closed in $\Fam{H_0}$ (by Lemma \ref{Lemma: Closure identity}), the former inherits an algebraic variety structure from the latter. Now consider the natural surjective morphism $$\sigma:\Upsilon(\overline{\mathcal{O}_{(S,\eLieT)}})=G\times_B\big((-(\overline{\mathcal{O}_{(S,\eLieT)}}))+\mathfrak{u}\big)\rightarrow G/B.$$ 
This is a Zariski-locally trivial fibre bundle associated to the principal $B$-bundle $G\rightarrow G/B$, and each fibre is isomorphic to $(-(\overline{\mathcal{O}_{(S,\eLieT)}}))+\mathfrak{u}$. Our description \eqref{Second combinatorial orbit description} of $\mathcal{O}_{(S,\eLieT)}$ implies that these fibres are smooth, and we conclude that the total space $\Upsilon(\overline{\mathcal{O}_{(S,\eLieT)}})$  must be smooth.
%each of whose fibres is isomorphic to $(-(\overline{\mathcal{O}_{(S,\eLieT)}}))+\mathfrak{u}$. Our description \eqref{} of $\overline{\mathcal{O}_{(S,\eLieT)}}$ implies that these fibres are smooth, while $\Fam{H_0}$ being a Zariski-locally trivial vector bundle over $G/B$ forces $\sigma$ to be Zariski-locally trivial. It follows that $\Upsilon(\overline{\mathcal{O}_{(S,\eLieT)}})$ is a smooth variety. 
At the same time, another straightforward application of \eqref{Second combinatorial orbit description} allows us to conclude that $G\times\big((-(\overline{\mathcal{O}_{(S,\eLieT)}}))+\mathfrak{u}\big)$ is irreducible. These last two sentences allow us to invoke \cite[Proposition 25.3.5]{Tauvel} and deduce that the quotient map
$$\pi':G\times\big((-(\overline{\mathcal{O}_{(S,\eLieT)}}))+\mathfrak{u}\big)\rightarrow \Upsilon(\overline{\mathcal{O}_{(S,\eLieT)}})$$ is the geometric quotient of $G\times\big((-(\overline{\mathcal{O}_{(S,\eLieT)}}))+\mathfrak{u}\big)$ by $B$ (cf. the first two sentences in the proof of Lemma \ref{Lemma: Closure identity}). It follows that $\pi'$ is open, meaning in particular that 
$\pi'\big(G\times\big((-\mathcal{O}_{(S,\eLieT)})+\mathfrak{u}\big)\big)=\Upsilon(\mathcal{O}_{(S,\eLieT)})$ is open in $\Upsilon(\overline{\mathcal{O}_{(S,\eLieT)}})$. Since $\Upsilon(\overline{\mathcal{O}_{(S,\eLieT)}})$ is closed in $\Fam{H_0}$, this means that $\Upsilon(\mathcal{O}_{(S,\eLieT)})$ is locally closed in $\Fam{H_0}$.

It remains only to prove that $\Upsilon(\mathcal{O}_{(S,\eLieT)})$ is smooth. However, smoothness is a consequence of $\Upsilon(\mathcal{O}_{(S,\eLieT)})$ being open in the smooth variety $\Upsilon(\overline{\mathcal{O}_{(S,\eLieT)}})$.
\end{proof}

We now come to the main result of this subsection.

\begin{theorem}\label{Theorem: Detailed Poisson structure}
The variety $\Fam{H_0}$ carries a natural Poisson structure, and its symplectic leaves are the subvarieties $\Upsilon(\mathcal{O}_{(S,\eLieT)})$, $(S,\eLieT)\in\mathcal{I}$. Furthermore, the symplectic form on each leaf is $G$-invariant.
\end{theorem}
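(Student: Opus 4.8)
The plan is to realize $X(H_0)$ as a union of symplectic leaves inside a Poisson variety obtained by reducing $T^*G$, and to import both the leaf description and the leafwise symplectic forms from the general reduction machinery recalled in Subsection~\ref{Subsection: Symplectic varieties}. Concretely, I would begin with the symplectic variety $G\times\mathfrak{g}\cong T^*G$ carrying the form $\Omega$ from \eqref{Equation: Symplectic form}, equipped with the Hamiltonian action obtained by restricting \eqref{Equation: Analogue of right cotangent lift} to $B$ and with moment map $\rho\colon G\times\mathfrak{g}\to\mathfrak{b}_{-}\cong\mathfrak{b}^*$. This $B$-action is free and proper, since its effect on the $G$-factor is the right multiplication $g\mapsto gb^{-1}$; hence the quotient $M:=(G\times\mathfrak{g})/B$ exists as the algebraic variety $G\times_B\mathfrak{g}$, and the construction of Subsection~\ref{Subsection: Symplectic varieties} endows $M$ with a Poisson structure descending from $\Omega$.

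Next I would describe the leaves of $M$ via the reduction result recalled around \eqref{Equation: Symplectic form condition}: they are the quotients $\rho^{-1}(\mathcal{O})/B$ as $\mathcal{O}$ ranges over the coadjoint orbits of $B$ in $\mathfrak{b}^*\cong\mathfrak{b}_{-}$, provided the fibres of $\rho$ are connected. Connectedness is immediate from Lemma~\ref{Lemma: Preimage of a subset}, which gives $\rho^{-1}(\{\phi\})=G\times(-\phi+\mathfrak{u})$, a product of $G$ with an affine space. I would then note that $X(H_0)=\rho^{-1}(V_{\text{Toda}})/B$ by \eqref{Equation: First quotient}, and that $V_{\text{Toda}}$ is, by Lemma~\ref{Lemma: Invariance} and Proposition~\ref{Proposition: Bijection with I}, precisely the disjoint union of the $B$-orbits $\mathcal{O}_{(S,\eLieT)}$; each is a full coadjoint orbit because the $B$-orbit in $\mathfrak{b}_{-}$ of a point of $V_{\text{Toda}}$ stays in $V_{\text{Toda}}$, by $B$-invariance. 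Consequently $X(H_0)$ is exactly the union of the leaves $\Upsilon(\mathcal{O}_{(S,\eLieT)})=\rho^{-1}(\mathcal{O}_{(S,\eLieT)})/B$ of $M$, as already recorded set-theoretically in \eqref{eq:decomposition of the family}.

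To conclude the Poisson assertion I would invoke the elementary principle that a smooth subvariety of a Poisson variety which is a union of symplectic leaves is automatically a Poisson subvariety, its own symplectic leaves being exactly the ambient leaves it contains (the reduced bivector is tangent to each leaf, hence to the subvariety). Smoothness of $X(H_0)$ is clear, since it is the total space of a vector bundle over $G/B$ (alternatively, apply Proposition~\ref{Proposition: Structure of strata}), so the induced Poisson structure has leaves exactly the $\Upsilon(\mathcal{O}_{(S,\eLieT)})$. For $G$-invariance of each leaf form I would use the left-multiplication action \eqref{Equation: Left action on first factor}: it commutes with the $B$-action, preserves $\Omega$ (being a cotangent lift), and leaves $\rho$ invariant, so it descends to $M$ and preserves each $\rho^{-1}(\mathcal{O})/B$; since then both $j_{\mathcal{O}}^*(\omega_X)$ and $\mu_{\mathcal{O}}^*(\omega_{\mathcal{O}})$ in \eqref{Equation: Symplectic form condition} are $G$-invariant and $\pi_{\mathcal{O}}$ is a $G$-equivariant submersion, the leaf form $\Omega_{\mathcal{O}}$ must be $G$-invariant.

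The main obstacle I anticipate is the holomorphic-to-algebraic translation: the reduction statements of Subsection~\ref{Subsection: Symplectic varieties} are phrased analytically, so some care is needed to verify that the reduced bracket on $G\times_B\mathfrak{g}$ is genuinely algebraic and that the analytic symplectic leaves coincide with the locally closed subvarieties $\Upsilon(\mathcal{O}_{(S,\eLieT)})$ supplied by Proposition~\ref{Proposition: Structure of strata}. A secondary point needing an explicit justification is the ``union of leaves $\Rightarrow$ Poisson subvariety'' principle in the algebraic category, which I would establish through the tangency of the reduced Poisson bivector to each leaf of $M$.
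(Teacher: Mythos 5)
Your proposal is correct and follows essentially the same route as the paper's proof: reduce $G\times\mathfrak{g}\cong T^*G$ by the free and proper $B$-action to get the holomorphic Poisson manifold $G\times_B\mathfrak{g}$ with leaves $\rho^{-1}(\mathcal{O})/B$, observe via \eqref{eq:decomposition of the family} that $\Fam{H_0}$ is a union of such leaves and hence a Poisson submanifold with exactly the leaves $\Upsilon(\mathcal{O}_{(S,\eLieT)})$, and deduce $G$-invariance of each leaf form from \eqref{Equation: Symplectic form condition} together with $G$-equivariance of the quotient submersion. The only cosmetic differences are that the paper cites \cite[Proposition 2.12]{Laurent-Gengoux} for the ``union of leaves $\Rightarrow$ Poisson submanifold'' principle you propose to verify by hand, establishes properness via the right trivialization rather than your factor-wise observation, and likewise defers (``we omit the details'') the holomorphic-to-algebraic upgrade that you flag as an obstacle.
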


\begin{proof}
Recall the Hamiltionian action of $B$ on $G\times\mathfrak{g}$ discussed at the beginning of this subsection. We would like to apply some of the results discussed in the last paragraph of Subsection \ref{Subsection: Symplectic varieties}, for which the following conditions will be necessary: the $B$-action is free and proper, and all fibres of $\rho$ are connected. Note that freeness follows immediately from the definition of the $B$-action, while Lemma \ref{Lemma: Preimage of a subset} implies that the fibres of $\rho$ are connected. To establish properness, recall that our $B$-action is the restriction of the $G$-action \eqref{Equation: Analogue of right cotangent lift}. Recall also that \eqref{Equation: Analogue of right cotangent lift} is the cotangent lift of \eqref{Equation: Right action}, after one has used the left trivialization to identify $T^*G$ with $G\times\mathfrak{g}$. If one instead uses the right trivialization, then the cotangent lift of \eqref{Equation: Right action} becomes
\begin{equation*}%\label{Equation: Action in right trivialization} 
h\cdot (g,x):=(gh^{-1},x),\quad g\in G,\text{ } (h,x)\in G\times\mathfrak{g}.\end{equation*}
It will therefore suffice to prove that this defines a proper action of $B$ on $G\times\mathfrak{g}$. However, this follows easily from the fact that \eqref{Equation: Right action} is a proper action of $B$ on $G$. 

We may now apply the results from \ref{Subsection: Symplectic varieties} alluded to earlier. It follows that $G\times_B\mathfrak{g}$ is a holomorphic Poisson manifold whose symplectic leaves are the complex submanifolds $\rho^{-1}(\mathcal{O})/B$, where $\mathcal{O}$ is an orbit of the $B$-action \eqref{Equation: Borel action} on $\mathfrak{b}_{-}$. In particular, $\rho^{-1}(\mathcal{O}_{(S,\eLieT)})/B=\Upsilon(\mathcal{O}_{(S,\eLieT)})$ is a symplectic leaf of $G\times_B\mathfrak{g}$ for all $(S,\eLieT)\in\mathcal{I}$. Note that \eqref{eq:decomposition of the family} then exhibits $\Fam{H_0}$ as a union of symplectic leaves in $G\times_B\mathfrak{g}$, so that $\Fam{H_0}$ must be a Poisson submanifold of $G\times_B\mathfrak{g}$ (see \cite[Proposition 2.12]{Laurent-Gengoux}). The symplectic leaves of $\Fam{H_0}$ are then necessarily the $\Upsilon(\mathcal{O}_{(S,\eLieT)})$, $(S,\eLieT)\in\mathcal{I}$.

Note that we have only shown $\Fam{H_0}$ to be a holomorphic Poisson manifold, rather than the stronger property of being a Poisson variety. However, this stronger result follows easily from the following two things: the fact that $G\times\mathfrak{g}$ is itself a Poisson variety, and the way in which we used the Poisson structure on $G\times\mathfrak{g}$ to induce one on $\Fam{H_0}$. We omit the details.

It now remains only to establish that $\Omega_{(S,\eLieT)}$, the symplectic form on $\Upsilon(\mathcal{O}_{(S,\eLieT)})$, is $G$-invariant for each $(S,\eLieT)\in\mathcal{I}$. Accordingly, note that \eqref{Equation: Isomorphism of opposite Borel subalgebras} identifies $\mathcal{O}_{(S,\eLieT)}$ with a coadjoint orbit of $B$. Let  $\omega_{(S,\eLieT)}$ denote the resulting symplectic form on $\mathcal{O}_{(S,\eLieT)}$. The last paragraph of Subsection \ref{Subsection: Symplectic varieties} implies that \begin{equation}\label{Equation: Symplectic form condition} \pi_{(S,\eLieT)}^*(\Omega_{(S,\eLieT)})=j_{(S,\eLieT)}^*(\Omega)-\rho_{(S,\eLieT)}^*(\omega_{(S,\eLieT)}),\end{equation}
where $\Omega$ is the symplectic form on $G\times\mathfrak{g}$ (see \eqref{Equation: Symplectic form}), $\pi_{(S,\eLieT)}:\rho^{-1}(\mathcal{O}_{(S,\eLieT)})\rightarrow \Upsilon(\mathcal{O}_{(S,\eLieT)})$ is the quotient map, $j_{(S,\eLieT)}:\rho^{-1}(\mathcal{O}_{(S,\eLieT)})\rightarrow G\times\mathfrak{g}$ is the inclusion, and $\rho_{(S,\eLieT)}:\rho^{-1}(\mathcal{O}_{(S,\eLieT)})\rightarrow \mathcal{O}_{(S,\eLieT)}$ is the restriction of $\rho$ to $\rho^{-1}(\mathcal{O}_{(S,\eLieT)})$. Since the $G$-action \eqref{Equation: Left action on first factor} preserves $\Omega$, it must also preserve $j_{(S,\eLieT)}^*(\Omega)$ on the $G$-invariant subvariety $\rho^{-1}(\mathcal{O}_{(S,\eLieT)})\subseteq G\times\mathfrak{g}$. Note also that $\rho_{(S,\eLieT)}$ is a $G$-invariant map, forcing $\rho_{(S,\eLieT)}^*(\omega_{(S,\eLieT)})$ to be preserved by the $G$-action as well. It now follows from \eqref{Equation: Symplectic form condition} that $\pi_{(S,\eLieT)}^*(\Omega_{(S,\eLieT)})$ is $G$-invariant. Since $\pi_{(S,\eLieT)}$ is a $G$-equivariant submersion, this shows $\Omega_{(S,\eLieT)}$ to be preserved by the $G$-action on $\Upsilon(\mathcal{O}_{(S,\eLieT)})$.
\end{proof}

We now formulate an immediate corollary. Let us set $$H_0^{\times}:=\mathfrak{b}\oplus\bigoplus_{\alpha\in\Pi}\mathfrak{g}_{-\alpha}^{\times},$$
an open, $B$-invariant subvariety of $H_0$. The open subvariety $$X(H_0^{\times}):=G\times_B H_0^{\times}\subseteq X(H_0)$$ then plays the following Poisson-geometric role.

\begin{corollary}\label{Corollary: Unique open dense leaf}
The subvariety $X(H_0^{\times})$ is the unique open dense symplectic leaf in $X(H_0)$.
\end{corollary}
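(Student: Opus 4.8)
The plan is to show that $X(H_0^{\times})$ coincides with the symplectic leaf $\Upsilon(\mathcal{O}_{(\Pi,0)})$ attached to the unique maximal element $(\Pi,0)\in\mathcal{I}$, and then to read off openness, density, and uniqueness from the stratification results already in hand. First I would carry out the elementary identification. By \eqref{Equation: Borel orbit description} we have $\mathcal{O}_{(\Pi,0)}=\mathfrak{t}+\sum_{\alpha\in\Pi}\mathfrak{g}_{-\alpha}^{\times}$, and since negation preserves the subspace $\mathfrak{t}$ and each punctured line $\mathfrak{g}_{-\alpha}^{\times}$, we get $-\mathcal{O}_{(\Pi,0)}=\mathcal{O}_{(\Pi,0)}$. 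Using $\mathfrak{b}=\mathfrak{t}\oplus\mathfrak{u}$ together with the definition \eqref{Equation: Arbitrary quotient} of $\Upsilon$, this yields
\[(-\mathcal{O}_{(\Pi,0)})+\mathfrak{u}=\mathfrak{t}+\mathfrak{u}+\sum_{\alpha\in\Pi}\mathfrak{g}_{-\alpha}^{\times}=\mathfrak{b}+\sum_{\alpha\in\Pi}\mathfrak{g}_{-\alpha}^{\times}=H_0^{\times},\]
so that $\Upsilon(\mathcal{O}_{(\Pi,0)})=G\times_B H_0^{\times}=X(H_0^{\times})$. By Theorem \ref{Theorem: Detailed Poisson structure}, $X(H_0^{\times})$ is therefore a symplectic leaf of $X(H_0)$.

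Next I would establish density. Since $(\Pi,0)$ is the unique maximal element of the poset $\mathcal{I}$, every $(S',z')\in\mathcal{I}$ satisfies $(S',z')\leq(\Pi,0)$; hence Corollary \ref{Corollary: Closure order} combined with the decomposition \eqref{eq:decomposition of the family} gives
\[\overline{\Upsilon(\mathcal{O}_{(\Pi,0)})}=\bigsqcup_{(S',z')\in\mathcal{I}}\Upsilon(\mathcal{O}_{(S',z')})=X(H_0).\]
Thus $X(H_0^{\times})$ is dense. Openness is then automatic: by Proposition \ref{Proposition: Structure of strata} the leaf $\Upsilon(\mathcal{O}_{(\Pi,0)})$ is locally closed, hence open in its own closure, and that closure is all of $X(H_0)$; therefore it is open in $X(H_0)$.

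For uniqueness I would invoke irreducibility of $X(H_0)$, which holds because it is the total space of a Zariski-locally trivial bundle over the irreducible base $G/B$ with irreducible (affine) fibre $H_0$. The symplectic leaves partition $X(H_0)$, so distinct leaves are disjoint; but in an irreducible variety any two nonempty open subsets must meet. Consequently two distinct leaves cannot both be open and dense, and since we have exhibited $X(H_0^{\times})$ as one such leaf, it is the \emph{unique} open dense symplectic leaf.

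I expect no serious obstacle here: the substance is entirely a matter of matching $X(H_0^{\times})$ with the top stratum and quoting the closure-order computation. The only point demanding mild care is the set-theoretic identity $(-\mathcal{O}_{(\Pi,0)})+\mathfrak{u}=H_0^{\times}$, where one must track that the $\mathfrak{t}$-direction is absorbed into $\mathfrak{b}$ while the punctured negative simple-root lines are preserved under negation.
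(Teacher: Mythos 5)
Your proposal is correct and follows essentially the same route as the paper's proof: identify $X(H_0^{\times})$ with the top stratum $\Upsilon(\mathcal{O}_{(\Pi,0)})$ via $-\mathcal{O}_{(\Pi,0)}+\mathfrak{u}=H_0^{\times}$, get density from Corollary \ref{Corollary: Closure order} using that $(\Pi,0)$ is the maximum of $\mathcal{I}$, and get openness from local closedness (Proposition \ref{Proposition: Structure of strata}). The only cosmetic difference is the uniqueness step, where you invoke irreducibility of $X(H_0)$ to rule out a second open dense leaf, while the paper reads uniqueness of the dense leaf directly off the closure order on strata; both are immediate and valid.
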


\begin{proof}
Note that $(\Pi,0)$ is the unique maximal element of $\mathcal{I}$, as observed in Subsection \ref{Subsection: The B-orbit stratification of V}. Together with Corollary \ref{Corollary: Closure order} and Theorem \ref{Theorem: Detailed Poisson structure}, this implies that $\Upsilon(\mathcal{O}_{(\Pi,0)})$ is the unique dense symplectic leaf in $X(H_0)$. Since this leaf is locally closed (i.e. open in its closure) by Proposition \ref{Proposition: Structure of strata}), it is necessarily also open in $\Fam{H_0}$. At the same time, we have the following two observations: $H_0^{\times}=-\mathcal{O}_{(\Pi,0)}+\mathfrak{u}$ (by \eqref{Equation: Borel orbit description}) and $\Upsilon(\mathcal{O}_{(\Pi,0)})=G\times_B\big((-\mathcal{O}_{(\Pi,0)})+\mathfrak{u}\big)$ (by \eqref{Equation: Arbitrary quotient}). It follows that $\Upsilon(\mathcal{O}_{(\Pi,0)})=G\times_B H_0^{\times}=X(H_0^{\times})$, which together with our earlier conclusions completes the proof.  
\end{proof}  

\section{Connecting the geometries of $G/Z\times S_{\textnormal{reg}}$ and $\Fam{H_0}$}\label{Section: Connecting the geometries}
In this final part of our paper, we relate the symplectic geometry of $G/Z\times S_{\text{reg}}$ to the Poisson geometry of $\Fam{H_0}$.
We thereby answer the motivating question from our introduction, in addition to obtaining supplementary results on the geometry of Hessenberg varieties.

\subsection{An open immersion {\rm $G/Z\times S_{\text{reg}}\hookrightarrow \Fam{H_0}$}}\label{Subsection: An open immersion}
Recall the notation established in Subsection \ref{Subsection: Slodowy slice}.
Since $\mathrm{ker}(\adj_{\eta})\subseteq H_0$ (see Lemma \ref{Lemma: Kernel of the nilnegative element}) and $\xi\in H_0$, we have the inclusion $S_{\text{reg}}\subseteq H_0.$ One can then readily verify that 
\begin{equation}\label{Equation: Modified inclusion}\psi:G\times S_{\text{reg}}\rightarrow G\times H_0,\quad (g,\eSreg)\mapsto (g,\eSreg),\end{equation} descends to a well-defined morphism   
\begin{equation*}%\label{Equation: Well-defined morphism}
\varphi:G/Z\times S_{\text{reg}}\rightarrow G\times_B H_0=\Fam{H_0},\quad (gZ,\eSreg)\mapsto [(g,\eSreg)].\end{equation*} We therefore have the following commutative diagram:
\begin{align}\label{Equation: Commutative diagram}
\begin{matrix}
\xymatrix{
	G\times S_{\text{reg}} \ar[d]^{\pi_{Z}} \ar[r]^{\psi} & G\times H_0 \ar[d]^{\pi_B} \\
	G/Z\times S_{\text{reg}} \ar[r]^{\quad\varphi} & G\times_B H_0=\Fam{H_0} \hspace{-58pt}
}
\end{matrix}\hspace{58pt},
\end{align}
%\begin{equation}\begin{tikzcd}\label{Equation: Commutative diagram}
%	G\times S_{\text{reg}} \arrow[r, "\psi"] \arrow["\pi_{Z}", d]
%	& G\times H_0 \arrow[d, "\pi_B"] \\
%	G/Z\times S_{\text{reg}} \arrow[r, "\varphi"]
%	& \Fam{H_0}
%\end{tikzcd},\end{equation} 
where $\pi_{Z}$ and $\pi_B$ are the natural quotient maps.

\begin{proposition}\label{Proposition: Open immersion}
	The map $\varphi$ is an open immersion of algebraic varieties, and its image is $\Fam{H_0^{\times}}$.
\end{proposition}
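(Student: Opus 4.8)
The plan is to exhibit an explicit two-sided inverse to $\varphi$, thereby showing $\varphi$ to be an isomorphism onto the open subvariety $X(H_0^{\times})$. First I would record that $\mathrm{image}(\varphi)\subseteq X(H_0^{\times})$: each $\eSreg\in S_{\text{reg}}$ lies in $\xi+\mathrm{ker}(\adj_{\eta})\subseteq\xi+\mathfrak{u}$ by Lemma \ref{Lemma: Kernel of the nilnegative element}, so its $\mathfrak{g}_{-\alpha}$-components agree with those of $\xi=\sum_{\alpha\in\Pi}e_{-\alpha}$ and are in particular all non-zero, whence $\eSreg\in H_0^{\times}$. Since $\mu_0\circ\varphi=\mu$ by construction, this already gives the asserted compatibility with the moment maps.

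The heart of the argument is the construction of a conjugating element depending algebraically on the base point. Given $x\in H_0^{\times}$, I would first use $T$ to rescale its negative simple root components: writing $c_{\alpha}(x)\in\mathbb{C}^{\times}$ for the coefficient of $e_{-\alpha}$ in $x$, there is a unique $\bar{t}(x)\in T/Z$ with $\alpha(\bar{t}(x))=c_{\alpha}(x)$ for all $\alpha\in\Pi$, and this defines a morphism $H_0^{\times}\rightarrow T/Z$ exactly as in the construction of $\theta$ in \ref{Subsection: The morphism theta}. Because $Z\subseteq\mathrm{ker}(\Adj)$, the element $\Adj_{\bar{t}(x)}(x)$ is well defined and, since $\mathfrak{b}$ is $T$-invariant, lies in $\xi+\mathfrak{b}$. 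Applying the inverse of the Kostant--Whittaker isomorphism $\vartheta\colon U\times S_{\text{reg}}\xrightarrow{\cong}\xi+\mathfrak{b}$ from \eqref{Equation: KostantWhittakerIso} then yields morphisms $x\mapsto u(x)\in U$ and $x\mapsto s(x)\in S_{\text{reg}}$ satisfying $\Adj_{u(x)^{-1}\bar{t}(x)}(x)=s(x)$. Setting $b(x):=u(x)^{-1}\bar{t}(x)\in B/Z$ produces morphisms $b\colon H_0^{\times}\rightarrow B/Z$ and $s\colon H_0^{\times}\rightarrow S_{\text{reg}}$ with $\Adj_{b(x)}(x)=s(x)$.

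A uniqueness statement underlies everything: if $b_1,b_2\in B$ both conjugate $x$ into $S_{\text{reg}}$, then $\Adj_{b_1}(x)$ and $\Adj_{b_2}(x)$ are $G$-conjugate points of $S_{\text{reg}}$, hence equal by the bijection \eqref{Equation: Parametrization of regular orbits}, so $b_2b_1^{-1}\in B\cap Z_G(\Adj_{b_1}(x))=Z$ by Proposition \ref{Lemma: B-stabilizer}; thus the conjugating element is unique modulo $Z$. Using this, I would check that $(g,x)\mapsto(gb(x)^{-1}Z,s(x))$ is constant on $B$-orbits in $G\times H_0^{\times}$, so it descends to a morphism $\Phi\colon X(H_0^{\times})\rightarrow G/Z\times S_{\text{reg}}$. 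I would then verify $\Phi\circ\varphi=\mathrm{id}$ and $\varphi\circ\Phi=\mathrm{id}$: for $\eSreg\in S_{\text{reg}}$ one has $\bar{t}(\eSreg)=e$ (its simple-root coefficients are already those of $\xi$) and $\vartheta^{-1}(\eSreg)=(e,\eSreg)$, whence $b(\eSreg)=e$ in $B/Z$ and $s(\eSreg)=\eSreg$, giving $\Phi\circ\varphi=\mathrm{id}$; conversely, the relation $\Adj_{b(x)}(x)=s(x)$ together with the $B$-action on $X(H_0)$ gives $[(gb(x)^{-1},s(x))]=[(g,x)]$, so $\varphi\circ\Phi=\mathrm{id}$. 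Since $H_0^{\times}$ is open in $H_0$, the subvariety $\Fam{H_0^{\times}}=G\times_B H_0^{\times}$ is open in $\Fam{H_0}$ (indeed it is the open dense leaf of Corollary \ref{Corollary: Unique open dense leaf}), so $\varphi$ is an open immersion with image $\Fam{H_0^{\times}}$.

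The step I expect to be the main obstacle is confirming that $b(x)$, and hence the candidate inverse $\Phi$, is genuinely algebraic and well defined. This is precisely where one must lean on the fact that $\vartheta$ is an isomorphism of varieties — so that $\vartheta^{-1}$, and thus $u$ and $s$, are morphisms — together with the uniqueness-modulo-$Z$ furnished by Proposition \ref{Lemma: B-stabilizer} and the Kostant section bijection \eqref{Equation: Parametrization of regular orbits}, which guarantees that the descent to $X(H_0^{\times})$ does not depend on the choice of representative or of conjugating element.
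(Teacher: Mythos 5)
Your proof is correct, but it takes a genuinely different route from the paper's. The paper proceeds indirectly: it checks that $\varphi$ is injective (via the bijection \eqref{Equation: Parametrization of regular orbits} and Proposition \ref{Lemma: B-stabilizer}), shows that each differential $d_{(gZ,\eSreg)}\varphi$ is an isomorphism (reducing by $G$-equivariance to points $(eZ,\eSreg)$ and using transversality of $S_{\text{reg}}$ to adjoint orbits), and then invokes Chevalley constructibility plus the fact that a bijective morphism onto a smooth variety is an isomorphism to conclude $\varphi$ is an open immersion; the identification of the image with $\Fam{H_0^{\times}}$ is done separately, by proving $B\cdot S_{\text{reg}}=H_0^{\times}$ via the torus rescaling and Kostant's identity $U\cdot S_{\text{reg}}=\xi+\mathfrak{b}$. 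You instead build an explicit two-sided inverse $\Phi$ out of the same two ingredients used for the paper's image computation --- the rescaling morphism $\bar{t}:H_0^{\times}\rightarrow T/Z$ and the inverse of the Kostant--Whittaker isomorphism \eqref{Equation: KostantWhittakerIso} --- with the uniqueness-modulo-$Z$ of the conjugating element (again \eqref{Equation: Parametrization of regular orbits} together with Proposition \ref{Lemma: B-stabilizer}) guaranteeing well-definedness and $B$-invariance, hence descent to $\Fam{H_0^{\times}}$. Your approach buys concreteness and avoids both the differential computation and the two nontrivial black boxes (constructibility of images and the bijective-morphism-is-isomorphism theorem), and it establishes surjectivity onto $\Fam{H_0^{\times}}$ constructively rather than as a separate set-theoretic argument; its one genuine technical burden, which you correctly flag, is the algebraicity of the section and the descent of $\Phi$ through the quotient $G\times H_0^{\times}\rightarrow G\times_B H_0^{\times}$, which is legitimate here because that quotient is geometric (the paper itself extracts this from \cite[Proposition 25.3.5]{Tauvel}, and it also follows from Zariski-local triviality of $G\rightarrow G/B$). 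The paper's approach, by contrast, requires no descent argument at all, since it never has to write down a map out of the quotient.
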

\begin{proof}
Assume that we know the following two statements to be true:
	\begin{itemize}
		\item[(i)] $\varphi$ is injective.
		\item[(ii)] For all $(g,\eSreg)\in G\times S_{\text{reg}}$, the differential \begin{equation*}%\label{Equation: Differential}
		d_{(gZ,\eSreg)}\varphi:T_{(gZ,\eSreg)}(G/Z\times S_{\text{reg}})\rightarrow T_{[(g,\eSreg)]}\Fam{H_0}\end{equation*} is an isomorphism. 
	\end{itemize}
One consequence of (ii) is that the image $\varphi(G/Z\times S_{\text{reg}})$ is open in the Euclidean topology of $\Fam{H_0}$. This image is also constructible, so that it must actually be open in the Zariski topology of $\Fam{H_0}$ (see \cite[Expos\'e XII, Corollaire 2.3]{Grothendieck}). In other words, $\varphi(G/Z\times S_{\text{reg}})$ is an open subvariety of $\Fam{H_0}$. The statement (i) then amounts to $\varphi$ defining a bijective morphism of varieties $G/Z\times S_{\text{reg}}\rightarrow\varphi(G/Z\times S_{\text{reg}})$. We also know $G/Z\times S_{\text{reg}}$ and $\varphi(G/Z\times S_{\text{reg}})$ to be smooth varieties, where the smoothness of $\varphi(G/Z\times S_{\text{reg}})$ follows from its being (Zariski-) open in $\Fam{H_0}$. This means that $G/Z\times S_{\text{reg}}\rightarrow\varphi(G/Z\times S_{\text{reg}})$ must be an isomorphism of varieties (see \cite[Corollary 17.4.8]{Tauvel}), i.e. that $\varphi$ is an open immersion. Accordingly, showing $\varphi$ to be an open immersion reduces to checking (i) and (ii).  

	To establish (i), suppose that $(g_1Z,\eSreg_1),(g_2Z,\eSreg_2)\in G\times S_{\text{reg}}$ satisfy $[(g_1,\eSreg_1)]=[(g_2,\eSreg_2)]\in \Fam{H_0}$. This amounts to the existence of $b\in B$ for which \begin{equation}\label{Equation: Two equations}g_2=g_1b^{-1} \text{    and    } \eSreg_2=\Adj_b(\eSreg_1).\end{equation} It follows that $\eSreg_1$ and $\eSreg_2$ belong to the same adjoint orbit, which then meets $S_{\text{reg}}$ at $\eSreg_1$ and $\eSreg_2$. By appealing to the bijection \eqref{Equation: Parametrization of regular orbits}, we see that $\eSreg_1=\eSreg_2$. The second equation in \eqref{Equation: Two equations} then implies that $b\in B\cap Z_G(\eSreg_1)$, which by Proposition \ref{Lemma: B-stabilizer} means $b\in Z$. The first equation in \eqref{Equation: Two equations} now implies that $g_1Z=g_2Z$, completing the proof of injectivity.
	
	We now prove (ii). Let $G$ act on both the domain and codomain of $\varphi$, in each case by left multiplication on the first factor. The map $\varphi$ is then $G$-equivariant, so we need only show that %\eqref{Equation: Differential}
	the differential in the claim (ii) 
	is an isomorphism at points of the form $(eZ,\eSreg)$, $\eSreg\in S_{\text{reg}}$. In the interest of refining this, let us differentiate \eqref{Equation: Commutative diagram} to obtain
	\begin{equation*}%\label{Equation: Chain rule}
	d_{(eZ,\eSreg)}\varphi\circ\ d_{(e,\eSreg)}\pi_{Z}=d_{(e,\eSreg)}\pi_B\circ d_{(e,\eSreg)}\psi\end{equation*}
	for all $\eSreg\in S_{\text{reg}}$. Observe that the differential $d_{(e,\eSreg)}\pi_{Z}$ is an isomorphism, a consequence of $Z$ being a finite group. It now follows from this equality that $d_{(eZ,\eSreg)}\varphi$ is an isomorphism if and only if $d_{(e,\eSreg)}\pi_B\circ d_{(e,\eSreg)}\psi$ is an isomorphism. At the same time, $d_{(e,\eSreg)}\pi_B\circ d_{(e,\eSreg)}\psi$ has a domain of $T_{(e,\eSreg)}(G\times S_{\text{reg}})$ and a codomain of $T_{[(g,\eSreg)]}\Fam{H_0}$. These are tangent spaces to smooth varieties of the same dimension (namely, $\dim(G)+r$), so that $d_{(e,\eSreg)}\pi_B\circ d_{(e,\eSreg)}\psi$ is an isomorphism if and only if it is injective.
	
	By virtue of the previous paragraph, we are reduced to proving that $d_{(e,\eSreg)}\pi_B\circ d_{(e,\eSreg)}\psi$ is injective for all $\eSreg\in S_{\text{reg}}$.  To this end, we have the identifications $T_{(e,\eSreg)}(G\times S_{\text{reg}})=\mathfrak{g}\oplus T_{\eSreg}S_{\text{reg}}=\mathfrak{g}\oplus\mathrm{ker}(\adj_{\eta})$ and $T_{(e,\eSreg)}(G\times H_0)=\mathfrak{g}\oplus H_0$. 
	It follows from the definition \eqref{Equation: Modified inclusion} of $\psi$ that $d_{(e,\eSreg)}\psi$ is the inclusion map from $\mathfrak{g}\oplus\mathrm{ker}(\adj_{\eta})$ to $\mathfrak{g}\oplus H_0$.
	%It follows that $d_{(e,x)}\psi$ is a map from $\mathfrak{g}\oplus\mathrm{ker}(\adj_{\eta})$ to $\mathfrak{g}\oplus H_0$, and one can use \eqref{Equation: Modified inclusion} to see that \begin{equation}\label{Equation: First differential calculation}d_{(e,x)}\psi(y,z)=(y,z)\in \mathfrak{g}\oplus H_0\end{equation} for all $(y,z)\in\mathfrak{g}\oplus \mathrm{ker}(\adj_{\eta})$.
	
	Now suppose that $(x,\eLieU)\in\mathfrak{g}\oplus\mathrm{ker}(\adj_{\eta})$ belongs to the kernel of $d_{(e,\eSreg)}\pi_B\circ d_{(e,\eSreg)}\psi$. Since $d_{(e,\eSreg)}\psi$ is the inclusion, $(x,\eLieU)$ must lie in the kernel of $d_{(e,\eSreg)}\pi_B$. This kernel is the subspace of $\mathfrak{g}\oplus H_0$ spanned by the fundamental vector fields of the $B$-action \eqref{Equation: Analogue of right cotangent lift}, i.e.
	\begin{align*}
	\mathrm{ker}(d_{(e,\eSreg)}\pi_B) & =\left\{\left.\frac{d}{dt}\right|_{t=0}\left(\exp(t\epsilon)^{-1},\Adj_{\exp(t\epsilon)}(\eSreg)\right):\epsilon\in\mathfrak{b}\right\}
	= \{(-\epsilon,[\epsilon,\eSreg]):\epsilon\in\mathfrak{b}\}\subseteq\mathfrak{g}\oplus H_0.
	\end{align*}
	In particular, there exists $\epsilon\in\mathfrak{b}$ for which $x=-\epsilon$ and $\eLieU=[\epsilon,\eSreg]$. Note that $\eLieU$ and $[\epsilon,\eSreg]$ belong to $T_{\eSreg}S_{\text{reg}}$ and $T_{\eSreg}\mathcal{O}(\eSreg)$, respectively, where $\mathcal{O}(\eSreg)$ is the adjoint $G$-orbit of $\eSreg$. These two subspaces of $\mathfrak{g}$ have a trivial intersection, as discussed in Subsection \ref{Subsection: Slodowy slice}). Hence $\eLieU=[\epsilon,\eSreg]=0$, and we conclude that $\epsilon$ belongs to the $\mathfrak{g}$-centralizer of $\eSreg$. Proposition \ref{Lemma: B-stabilizer} implies that this centralizer has a trivial intersection with $\mathfrak{b}$, giving $\epsilon=0$. We conclude that $(x,\eLieU)=(0,0)$, completing the proof of (ii). In particular, $\varphi$ is an open immersion.
	
	It remains only to verify that $X(H_0^{\times})$ is the image of $\varphi$. To this end, we define $$K\cdot A:=\{\Adj_k(a):k\in K,\text{ }a\in A\}\subseteq\mathfrak{g}$$
for any subset $A\subseteq\mathfrak{g}$ and closed subgroup $K\subseteq G$. It is straightforward to check that $G\times_B(B\cdot S_{\text{reg}})$ is the image of $\varphi$, and we are therefore reduced to showing that $B\cdot S_{\text{reg}}=H_0^{\times}$. Accordingly, recall that $\mathrm{ker}(\adj_{\eta})\subseteq\mathfrak{u}$ (by Lemma \ref{Lemma: Kernel of the nilnegative element}) and that $\xi=\sum_{\alpha\in\Pi}e_{-\alpha}$. It follows that $$S_{\text{reg}}=\xi+\mathrm{ker}(\adj_{\eta})\subseteq H_0^{\times}.$$ Since $H_0^{\times}$ is $B$-invariant, the inclusion $B\cdot S_{\text{reg}}\subseteq H_0^{\times}$ follows. To obtain the opposite inclusion, take $w\in H_0^{\times}$ and write
$$w=\left(\sum_{\alpha\in\Pi}w_{-\alpha}e_{-\alpha}\right)+x$$
with $x\in\mathfrak{b}$ and $w_{-\alpha}\in\mathbb{C}^{\times}$ for all $\alpha\in\Pi$. Choose $t\in T$ such that $\alpha(t)=(w_{-\alpha})^{-1}$ for all $\alpha\in\Pi$ and note that
\begin{equation}\label{Equation: New expression for w}w=\Adj_t(\xi+\Adj_{t^{-1}}(x)).\end{equation}
At the same time,   
Kostant's isomorphism \eqref{Equation: KostantWhittakerIso} implies that $U\cdot S_{\text{reg}}=\xi+\mathfrak{b}$. We thus have
\begin{equation}\label{Equation: Action on a subset} B\cdot S_{\text{reg}}=T\cdot(U\cdot S_{\text{reg}})=T\cdot(\xi+\mathfrak{b}).\end{equation} The equation \eqref{Equation: New expression for w} implies that $w\in T\cdot(\xi+\mathfrak{b})$, which by \eqref{Equation: Action on a subset} means that $w\in B\cdot S_{\text{reg}}$. We conclude that $H_0^{\times}\subseteq B\cdot S_{\text{reg}}$, completing the proof. 
\end{proof}

Now recall that $X(H_0^{\times})=\Upsilon(\mathcal{O}_{(\Pi,0)})$ (see the proof of Corollary \ref{Corollary: Unique open dense leaf}), and that the latter variety carries a symplectic form denoted by $\Omega_{(\Pi,0)}$ (see the proof of Theorem \ref{Theorem: Detailed Poisson structure}). At the same time, $G/Z\times S_{\text{reg}}$ carries the symplectic form $\Omega_{\text{reg}}$ (see Subsection \ref{Subsection: The holomorphic symplectic structure}). In this context, we have the following result.

\begin{theorem}\label{Theorem: Symplectomorphism to image}
Viewed as a map to its image, $\varphi$ is a symplectomorphism $G/Z\times S_{\emph{reg}}\rightarrow X(H_0^{\times})$.  
\end{theorem}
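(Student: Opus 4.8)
The plan is to build directly on Proposition \ref{Proposition: Open immersion}, which already shows that $\varphi$ is an open immersion with image $X(H_0^{\times})$; regarded as a map onto its image it is thus an isomorphism of varieties, and all that remains is to check that it intertwines the two symplectic forms, i.e. that $\varphi^*(\Omega_{(\Pi,0)}) = \Omega_{\text{reg}}$, where $\Omega_{(\Pi,0)}$ is the symplectic form carried by the leaf $X(H_0^{\times}) = \Upsilon(\mathcal{O}_{(\Pi,0)})$ and $\Omega_{\text{reg}}$ is the form from Subsection \ref{Subsection: The holomorphic symplectic structure}. Since $Z$ is finite, the quotient map $\pi_{Z}\colon G\times S_{\text{reg}}\to G/Z\times S_{\text{reg}}$ is a finite étale cover, so its differential is an isomorphism at each point and $\pi_{Z}^*$ is injective on $2$-forms. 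It therefore suffices to verify $\pi_{Z}^*(\varphi^*(\Omega_{(\Pi,0)})) = \pi_{Z}^*(\Omega_{\text{reg}})$ on $G\times S_{\text{reg}}$, and I would compute both sides by passing through the commutative square \eqref{Equation: Commutative diagram}.

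For the right-hand side, recall that $\Omega_{\text{reg}}$ was defined precisely so that $\pi_{Z}^*(\Omega_{\text{reg}}) = \iota^*(\Omega)$, where $\iota\colon G\times S_{\text{reg}}\hookrightarrow G\times\mathfrak{g}$ is the inclusion and $\Omega$ is the form \eqref{Equation: Symplectic form} (see \eqref{Equation: Coincident forms}). For the left-hand side, commutativity of \eqref{Equation: Commutative diagram} gives $\varphi\circ\pi_{Z} = \pi_B\circ\psi$, whence $\pi_{Z}^*(\varphi^*(\Omega_{(\Pi,0)})) = \psi^*(\pi_B^*(\Omega_{(\Pi,0)}))$. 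The key structural observation is that $\psi$ has image in the open set $G\times H_0^{\times} = \rho^{-1}(\mathcal{O}_{(\Pi,0)})$, because $S_{\text{reg}}\subseteq H_0^{\times}$; on this open set $\pi_B$ restricts to the $B$-quotient map $\pi_{(\Pi,0)}$ onto $\Upsilon(\mathcal{O}_{(\Pi,0)})$. Hence $\psi^*(\pi_B^*(\Omega_{(\Pi,0)})) = \psi^*(\pi_{(\Pi,0)}^*(\Omega_{(\Pi,0)}))$, and I would then substitute the reduction identity \eqref{Equation: Symplectic form condition} from the proof of Theorem \ref{Theorem: Detailed Poisson structure}, namely $\pi_{(\Pi,0)}^*(\Omega_{(\Pi,0)}) = j_{(\Pi,0)}^*(\Omega) - \rho_{(\Pi,0)}^*(\omega_{(\Pi,0)})$, reducing the whole computation to pulling these two summands back along $\psi$.

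The first summand is immediate: $j_{(\Pi,0)}\circ\psi$ is visibly the inclusion $\iota$, so it contributes exactly $\iota^*(\Omega)$, which already matches the right-hand side. The crux—and the one place I expect any genuine content to reside—is to show that the Kirillov--Kostant--Souriau correction term $\psi^*(\rho_{(\Pi,0)}^*(\omega_{(\Pi,0)}))$ vanishes. This should follow from a direct computation of $\rho\circ\psi$: for $\eSreg = \xi + w\in S_{\text{reg}}$ with $w\in\mathrm{ker}(\adj_{\eta})\subseteq\mathfrak{u}$ (Lemma \ref{Lemma: Kernel of the nilnegative element}), one has $\pi_{\mathfrak{b}_{-}}(\eSreg) = \xi$, so $\rho(g,\eSreg) = -\pi_{\mathfrak{b}_{-}}(\eSreg) = -\xi$ independently of $(g,\eSreg)$. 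Thus $\rho_{(\Pi,0)}\circ\psi$ is the constant map with value $-\xi\in\mathcal{O}_{(\Pi,0)}$, its differential is identically zero, and $\psi^*(\rho_{(\Pi,0)}^*(\omega_{(\Pi,0)})) = 0$. Combining the two summands gives $\pi_{Z}^*(\varphi^*(\Omega_{(\Pi,0)})) = \iota^*(\Omega) = \pi_{Z}^*(\Omega_{\text{reg}})$, and the injectivity of $\pi_{Z}^*$ then yields $\varphi^*(\Omega_{(\Pi,0)}) = \Omega_{\text{reg}}$. Together with Proposition \ref{Proposition: Open immersion}, this establishes that $\varphi$ is a symplectomorphism onto $X(H_0^{\times})$.
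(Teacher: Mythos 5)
Your proof is correct, and it reaches the same identity $\varphi^*(\Omega_{(\Pi,0)})=\Omega_{\text{reg}}$ by a route that is organized quite differently from the paper's. The paper first invokes the $G$-invariance of both symplectic forms (the invariance of $\Omega_{(\Pi,0)}$ being part of Theorem \ref{Theorem: Detailed Poisson structure}) together with the $G$-equivariance of $\varphi$ to reduce the verification to points of the form $(eZ,\eSreg)$, and then performs a pointwise computation on tangent vectors $(y_i,z_i)\in\mathfrak{g}\oplus\mathrm{ker}(\adj_{\eta})$, killing the Kirillov--Kostant--Souriau correction term by the pointwise observation $d_{(e,\eSreg)}\rho(y,z)=-\pi_{\mathfrak{b}_{-}}(z)=0$ for $z\in\mathrm{ker}(\adj_{\eta})\subseteq\mathfrak{u}$. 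You bypass the equivariance reduction entirely: since $\pi_Z$ is a finite covering, $\pi_Z^*$ is injective on $2$-forms, so the whole comparison can be done globally upstairs on $G\times S_{\text{reg}}$; and your observation that $\rho\circ\psi\equiv-\xi$ is \emph{constant} is precisely the global statement whose derivative is the paper's pointwise vanishing. Both arguments funnel through the same key inputs --- the square \eqref{Equation: Commutative diagram}, the reduction identity \eqref{Equation: Symplectic form condition}, the relation $\pi_Z^*(\Omega_{\text{reg}})=\iota^*(\Omega)$ (the paper uses its pointwise avatar \eqref{Equation: Coincident forms}), the identification $G\times H_0^{\times}=\rho^{-1}(\mathcal{O}_{(\Pi,0)})$ on which $\pi_B$ restricts to $\pi_{(\Pi,0)}$, and Lemma \ref{Lemma: Kernel of the nilnegative element} --- but your packaging is leaner: it needs neither the $G$-invariance clause of Theorem \ref{Theorem: Detailed Poisson structure} nor any explicit tangent-space bookkeeping, whereas the paper's proof depends on both. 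What the paper's route buys in exchange is that it stays entirely within the pointwise identifications already set up in Subsection \ref{Subsection: The holomorphic symplectic structure} and makes the differential of $\varphi$ explicit (a computation it reuses, e.g., via \eqref{Equation: Second simple chain rule}); your route instead isolates the conceptual reason the correction term dies, namely that $\rho$ is constant on $G\times S_{\text{reg}}$ because $S_{\text{reg}}-\xi\subseteq\mathfrak{u}$ is annihilated by $\pi_{\mathfrak{b}_{-}}$. One cosmetic point: when you write $\pi_Z^*(\varphi^*(\Omega_{(\Pi,0)}))=\psi^*(\pi_B^*(\Omega_{(\Pi,0)}))$, the pullback $\pi_B^*(\Omega_{(\Pi,0)})$ only makes literal sense after restricting $\pi_B$ to $G\times H_0^{\times}$, since $\Omega_{(\Pi,0)}$ lives on the open leaf $X(H_0^{\times})$; you address this in the very next sentence, so this is a matter of ordering the statements, not a gap.
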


\begin{proof}
We will henceforth regard $\varphi$ as a map $G/Z\times S_{\text{reg}}\rightarrow \Fam{H_0^{\times}}$. Our task is then to prove that $\varphi^*(\Omega_{(\Pi,0)})=\Omega_{\text{reg}}$. To refine this objective, consider the $G$-actions on $G/Z\times S_{\text{reg}}$ and $\Fam{H_0^{\times}}$ discussed in Subsection \ref{Subsection: The holomorphic symplectic structure} and Lemma \ref{Lemma: G-invariance of Upsilon}, respectively. Recall that $\Omega_{\text{reg}}$ and $\Omega_{(\Pi,0)}$ are $G$-invariant forms, with the invariance of $\Omega_{(\Pi,0)}$ a consequence of Theorem \ref{Theorem: Detailed Poisson structure}. The map $\varphi$ is easily seen to be $G$-equivariant, which together with the previous sentence implies that both $\Omega_{\text{reg}}$ and $\varphi^*(\Omega_{(\Pi,0)})$ must be preserved by the $G$-action on $G/Z\times S_{\text{reg}}$. Since each point in $G/Z\times S_{\text{reg}}$ is $G$-conjugate to one of the form $(eZ,\eSreg)$, $\eSreg\in S_{\text{reg}}$, we conclude that $\Omega_{\text{reg}}=\varphi^*(\Omega_{(\Pi,0)})$ if and only if \begin{equation}\label{Equation: Reduced proof}(\Omega_{\text{reg}})_{(eZ,\eSreg)}=(\varphi^*(\Omega_{(\Pi,0)}))_{(eZ,\eSreg)}\end{equation} for all $\eSreg\in S_{\text{reg}}$. Our proof will consist of verifying \eqref{Equation: Reduced proof}.
	
	Recall the maps appearing in the commutative diagram \eqref{Equation: Commutative diagram}, as well as the equation
	\begin{equation}\label{Equation: Same equation} d_{(eZ,\eSreg)}\varphi\circ\ d_{(e,\eSreg)}\pi_{Z}=d_{(e,\eSreg)}\pi_B\circ d_{(e,\eSreg)}\psi.\end{equation} %(see \eqref{Equation: Chain rule}). 
Let us identify both $T_{(e,\eSreg)}(G\times S_{\text{reg}})$ and $T_{(eZ,\eSreg)}(G/Z\times S_{\text{reg}})$ with $\mathfrak{g}\oplus\mathrm{ker}(\adj_{\eta})$ (as in Subsection \ref{Subsection: The holomorphic symplectic structure}), and $T_{(e,\eSreg)}(G\times H_0)$ with $\mathfrak{g}\oplus H_0$. We may thereby regard \eqref{Equation: Same equation} as an equality of linear maps $\mathfrak{g}\oplus\mathrm{ker}(\adj_{\eta})\rightarrow T_{[(e,\eSreg)]}\Fam{H_0}$. Now recall that $d_{(e,\eSreg)}\psi:\mathfrak{g}\oplus\mathrm{ker}(\adj_{\eta})\rightarrow\mathfrak{g}\oplus H_0$ is the inclusion (see the proof of Proposition \ref{Proposition: Open immersion}) as well as the fact that $d_{(e,\eSreg)}\pi_{Z}$ is the identity map on $\mathfrak{g}\oplus\mathrm{ker}(\adj_{\eta})$ (see Subsection \ref{Subsection: The holomorphic symplectic structure}). With these points in mind, \eqref{Equation: Same equation} implies that 
	\begin{equation}\label{Equation: Simple chain rule} d_{(eZ,\eSreg)}\varphi(y,z)=d_{(e,\eSreg)}\pi_B(y,z)\end{equation} for all $(y,z)\in\mathfrak{g}\oplus\mathrm{ker}(\adj_{\eta})$.
	
	Now consider the quotient maps $\pi_{(S,\eLieT)}:\rho^{-1}(\mathcal{O}_{(S,\eLieT)})\rightarrow \rho^{-1}(\mathcal{O}_{(S,\eLieT)})/B=\Upsilon(\mathcal{O}_{(S,\eLieT)})$, $(S,\eLieT)\in\mathcal{I}$, as discussed in the proof of Theorem \ref{Theorem: Detailed Poisson structure}. Setting $(S,\eLieT)=(\Pi,0)$ and referring to Lemma \ref{Lemma: Preimage of a subset} and the proof of Corollary \ref{Corollary: Unique open dense leaf}, one sees that $\pi_{(\Pi,0)}$ is precisely the quotient map $$\pi_{(\Pi,0)}:G\times H_0^{\times}\rightarrow \Fam{H_0^{\times}}.$$ In other words, $\pi_{(\Pi,0)}$ is simply the result of restricting the domain and codomain of $\pi_B$ to $G\times H_0^{\times}$ and $\Fam{H_0^{\times}}$, respectively. It follows that $d_{(e,\eSreg)}\pi_B=d_{(e,\eSreg)}\pi_{(\Pi,0)}$, so that \eqref{Equation: Simple chain rule} becomes
	\begin{equation}\label{Equation: Second simple chain rule} d_{(eZ,\eSreg)}\varphi(y,z)=d_{(e,\eSreg)}\pi_{(\Pi,0)}(y,z)\end{equation} for all $(y,z)\in\mathfrak{g}\oplus\mathrm{ker}(\adj_{\eta}).$
	
	We now evaluate $(\varphi^*(\Omega_{(\Pi,0)}))_{(eZ,\eSreg)}$ on a pair of tangent vectors $(y_1,z_1),(y_2,z_2)\in\mathfrak{g}\oplus\mathrm{ker}(\adj_{\eta})$. Indeed, we have
	\begin{align}
     \begin{split}\label{Equation: Pullback calculation}
	& (\varphi^*(\Omega_{(\Pi,0)}))_{(eZ,\eSreg)}\bigg(\big(y_1,z_1\big),\big(y_2,z_2\big)\bigg)\\ 
	& \quad = ((\pi_{(\Pi,0)})^*(\Omega_{(\Pi,0)}))_{(e,\eSreg)}\bigg(\big(y_1,z_1\big),\big(y_2,z_2\big)\bigg) \hspace{153pt} [\text{by } \eqref{Equation: Second simple chain rule}] \\
	&\quad = \Omega_{(e,\eSreg)}\bigg(\big(y_1,z_1\big),\big(y_2,z_2\big)\bigg)-(\rho_{(\Pi,0)}^*(\omega_{(\Pi,0)}))_{(e,p)}\bigg(\big(y_1,z_1\big),\big(y_2,z_2\big)\bigg) \hspace{38pt} [\text{by } \eqref{Equation: Symplectic form condition}] \\
	&\quad = (\Omega_{\text{reg}})_{(eZ,\eSreg)}\bigg(\big(y_1,z_1\big),\big(y_2,z_2\big)\bigg)-(\rho_{(\Pi,0)}^*(\omega_{(\Pi,0)}))_{(e,p)}\bigg(\big(y_1,z_1\big),\big(y_2,z_2\big)\bigg) \hspace{12pt}  [\text{by } \eqref{Equation: Coincident forms}]. 
\end{split}	
\end{align} 
	At the same time, it is a straightforward consequence of $\rho$'s definition (see Subsection \ref{Subsection: A symplectic structure}) that $$d_{(e,\eSreg)}\rho(y_1,z_1)=-\pi_{\mathfrak{b}_{-}}(z_1)\quad \text{and} \quad d_{(e,\eSreg)}\rho(y_2,z_2)=-\pi_{\mathfrak{b}_{-}}(z_2).$$ Since $z_1,z_2\in\mathfrak{u}$ (by Lemma \ref{Lemma: Kernel of the nilnegative element}), it follows that 
$$d_{(e,\eSreg)}\rho(y_1,z_1)=0=d_{(e,\eSreg)}\rho(y_2,z_2).$$ We conclude that $$(\rho_{(\Pi,0)}^*(\omega_{(\Pi,0)}))_{(e,p)}\bigg(\big(y_1,z_1\big),\big(y_2,z_2\big)\bigg)=0,$$ which by \eqref{Equation: Pullback calculation} means that \eqref{Equation: Reduced proof} holds. This completes the proof.
\end{proof}

\subsection{The integrable system on $X(H_0)$}\label{Subsection: The integrable system}

Taken together, Theorems \ref{Theorem: Other system} and \ref{Theorem: Symplectomorphism to image} give rise to a completely integrable system on the open dense symplectic leaf $X(H_0^{\times})\subseteq \Fam{H_0}$. We now establish that this system extends to one on all of $\Fam{H_0}$, where our notion of a completely integrable system on (the Poisson variety) $\Fam{H_0}$ comes from Remark \ref{Remark: More general definition}. In the interest of being more precise, recall the morphisms $\mu:G/Z\times S_{\text{reg}}\rightarrow\mathfrak{g}$ and $\mu_0:\Fam{H_0}\rightarrow\mathfrak{g}$ from Subsections \ref{Subsection: The holomorphic symplectic structure} and \ref{Subsection: Hessenberg varieties in general}, respectively. by $$\mu_0([(g,x)])=\Adj_g(x),\quad [(g,x)]\in\Fam{H_0}.$$ We then have the commutative diagram
\begin{align}\label{Equation: Small commutative diagram}
\xymatrix{
	G/Z\times S_{\text{reg}} \ar[rd]_{\mu} \ar[rr]^{\varphi} & & \Fam{H_0} \ar[ld]^{\mu_0} \\
	& \mathfrak{g} & 
}
\end{align}

Now recall the Mishchenko-Fomenko polynomials $f_1^{\zeta},\ldots,f_{\ell}^{\zeta}\in\mathbb{C}[\mathfrak{g}]$ from Subsection \ref{Subsection: The integrable system on G/Z x S}, and consider the following functions on $\Fam{H_0}$:
$$\tilde{\tau_i}:=(\mu_0)^*(f_i^{\zeta}),\quad i\in\{1,\ldots,\ell\}.$$

\begin{corollary}
	The functions $\tilde{\tau_1},\ldots,\tilde{\tau_{\ell}}$ form a completely integrable system on $\Fam{H_0}$. This system extends the one defined on $G/Z\times S_{\emph{reg}}$, in the sense that $\varphi^*(\tilde{\tau_i})=\tau_i$ for all $i\in\{1,\ldots,\ell\}$.
\end{corollary}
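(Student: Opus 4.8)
The plan is to obtain both assertions from Theorem~\ref{Theorem: Open immersion}, which realizes $\varphi$ as a symplectomorphism onto the open dense symplectic leaf $X(H_0^{\times})$. The extension statement is immediate: using the commutative diagram \eqref{Equation: Small commutative diagram} together with the definitions of $\tau_i$ and $\tilde{\tau_i}$, one computes
\[
\varphi^*(\tilde{\tau_i})=\varphi^*(\mu_0^*(f_i^{\zeta}))=(\mu_0\circ\varphi)^*(f_i^{\zeta})=\mu^*(f_i^{\zeta})=\tau_i
\]
for all $i\in\{1,\ldots,\ell\}$. It remains to verify that $\tilde{\tau_1},\ldots,\tilde{\tau_{\ell}}$ form a completely integrable system on the Poisson variety $X(H_0)$, in the sense of Remark~\ref{Remark: More general definition}. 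As bookkeeping, $X(H_0)=G\times_B H_0$ is smooth and irreducible of dimension $\dim(G)-\dim(B)+\dim(H_0)=\dim(G)+r=2\ell$ (using $\dim(H_0)=\ell+r$ and $2\ell=\dim(G)+r$), and Corollary~\ref{Corollary: Unique open dense leaf} supplies the open dense symplectic leaf $X(H_0^{\times})$. Hence Remark~\ref{Remark: More general definition} applies and an integrable system must consist of exactly $\ell$ functions, which matches the count of the $\tilde{\tau_i}$.

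For pairwise Poisson-commutativity I would show that the regular functions $\{\tilde{\tau_i},\tilde{\tau_j}\}$ on $X(H_0)$ vanish identically. Since $X(H_0^{\times})$ is a symplectic leaf, the restriction of the ambient bracket agrees there with the bracket determined by the leaf's symplectic form; as $\varphi$ is a Poisson isomorphism onto $X(H_0^{\times})$ with $\varphi^*(\tilde{\tau_i})=\tau_i$, the function $\{\tilde{\tau_i},\tilde{\tau_j}\}\big|_{X(H_0^{\times})}$ pulls back under $\varphi$ to $\{\varphi^*\tilde{\tau_i},\varphi^*\tilde{\tau_j}\}=\{\tau_i,\tau_j\}$, which vanishes by Theorem~\ref{Theorem: Other system}. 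Injectivity of $\varphi^*$ then forces the restriction itself to vanish, and since $X(H_0^{\times})$ is dense in the irreducible variety $X(H_0)$ while $\{\tilde{\tau_i},\tilde{\tau_j}\}$ is regular, it vanishes on all of $X(H_0)$. For functional independence, Theorem~\ref{Theorem: Other system} provides an open dense $W\subseteq G/Z\times S_{\text{reg}}$ on which $d\tau_1\wedge\cdots\wedge d\tau_{\ell}\neq 0$; because $\varphi$ is an isomorphism onto $X(H_0^{\times})$ and $\varphi^*(\tilde{\tau_i})=\tau_i$, the image $\varphi(W)$ is open dense in $X(H_0^{\times})$, hence in $X(H_0)$, and $d\tilde{\tau_1}\wedge\cdots\wedge d\tilde{\tau_{\ell}}$ is nonvanishing there.

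The only delicate point is the step relating the ambient Poisson bracket on $X(H_0)$ to the symplectic bracket on its leaf, and propagating vanishing from the leaf to the whole variety. This can be circumvented entirely: $\mu_0$ is, up to sign, the moment map for the residual $G$-action on the Poisson variety $X(H_0)$ inherited from the reduction of $G\times\mathfrak{g}$ (cf.\ Subsection~\ref{Subsection: Symplectic varieties}), so $\mu_0^*$ is a morphism of Poisson algebras; since the $f_i^{\zeta}$ already Poisson-commute on $\mathfrak{g}$ by Theorem~\ref{Theorem: Main MF}, their pullbacks $\tilde{\tau_i}=\mu_0^*(f_i^{\zeta})$ Poisson-commute on all of $X(H_0)$ directly, bypassing the density argument for commutativity while leaving functional independence to be handled via $\varphi$ as above.
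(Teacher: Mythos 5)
Your proposal is correct, and its main line is exactly the paper's argument: establish $\varphi^*(\tilde{\tau_i})=\tau_i$ from the commutative diagram, transfer the integrable system from $G/Z\times S_{\text{reg}}$ to the leaf $X(H_0^{\times})$ via the symplectomorphism of Theorem \ref{Theorem: Symplectomorphism to image}, and then pass to all of $X(H_0)$ using that the leaf is open, dense, and non-empty. The difference is one of explicitness: the paper compresses the last step into a single ``it follows,'' whereas you spell out the two facts it silently uses, namely that the ambient Poisson bracket restricted to a symplectic leaf agrees with the leaf's own bracket, and that a regular function vanishing on a dense subset of the irreducible variety $X(H_0)$ vanishes identically. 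Your final paragraph, however, is a genuine alternative the paper does not take: since $\mu_0$ is the descent of the left moment map $\mu_L(g,x)=\Adj_g(x)$ through the Poisson quotient $G\times\mathfrak{g}\rightarrow G\times_B\mathfrak{g}$, and $X(H_0)$ is a Poisson subvariety of that quotient, the pullback $\mu_0^*$ is a morphism of Poisson algebras, so commutativity of the $\tilde{\tau_i}$ drops out of Theorem \ref{Theorem: Main MF} with no appeal to the leaf or to density. This buys a cleaner and more robust commutativity proof (it would survive even without identifying $X(H_0^{\times})$ as a leaf), but two small points deserve care: the paper's Subsection \ref{Subsection: Symplectic varieties} states the moment-map/Poisson-morphism fact only for symplectic varieties, so you should justify the Poisson property of $\mu_0^*$ by the chain you gesture at ($\mu_L^*$ is Poisson on $G\times\mathfrak{g}$, the quotient map $\pi_B$ is Poisson with injective pullback, and the inclusion of the Poisson subvariety $X(H_0)$ is Poisson); and no sign is actually needed, since $\mu_L$, unlike $\mu_R$, descends verbatim to $\mu_0$. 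Functional independence still requires $\varphi$ and the open dense leaf in either approach, exactly as you handle it.
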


\begin{proof}
	Note that 
	$$\varphi^*(\tilde{\tau_i})=(\mu_0\circ\varphi)^*(f_i^{\zeta})=\mu^*(f_i^{\zeta})=\tau_i$$
	for all $i\in\{1,\ldots,\ell\}$, where the second instance of equality follows from the commutative diagram above. If we instead regard $\varphi$ as a map to its image $X(H_0^{\times})$, then this statement becomes 
	\begin{equation}\label{Equation: Pullback}\varphi^*(\tilde{\tau_i}\vert_{X(H_0^{\times})})=\tau_i,\quad i\in\{1,\ldots,\ell\}.\end{equation} Together with Theorem \ref{Theorem: Symplectomorphism to image} and the fact that $\tau_1,\ldots,\tau_{\ell}$ form a completely integrable system on $G/Z\times S_{\text{reg}}$, \eqref{Equation: Pullback} implies that $\tilde{\tau_1}\vert_{X(H_0^{\times})},\ldots,\tilde{\tau_{\ell}}\vert_{X(H_0^{\times})}$ form a completely integrable system on the symplectic leaf $X(H_0^{\times})$. Since $X(H_0^{\times})$ is open in $\Fam{H_0}$ (and non-empty), it follows that $\tilde{\tau_1},\ldots,\tilde{\tau_{\ell}}$ form a completely integrable system on $X(H_0)$.
\end{proof}

\subsection{An application to regular Hessenberg varieties}\label{Subsection: An application to regular Hessenberg varieties}
We now consider some implications of our work for the geometry of Hessenberg varieties $X(x,H_0)$, $x\in\mathfrak{g}$. Our first step is to study
$$X(x,H_0^{\times}):=X(x,H_0)\cap X(H_0^{\times}),$$ an open subvariety of $X(x,H_0)$.

\begin{remark}\label{Remark: Hessenberg in G/B}
As discussed in Remark \ref{Remark: Common Hessenberg}, there is an explicit isomorphism between $X(x,H_0)$ and a variety of the form \eqref{Equation: Common Hessenberg}. One can verify that this isomorphism identifies $X(x,H_0^{\times})\subseteq X(x,H_0)$ with \begin{equation}\label{Equation: Open common Hessenberg}\{g\in G/B:\Adj_{g^{-1}}(x)\in H_0^{\times}\}.\end{equation} 
\end{remark}

\begin{lemma}
	For $x\in\mathfrak{g}$, we have $X(x,H_0^{\times})\neq\emptyset$ if and only if $x\in\mathfrak{g}_{\emph{reg}}$.
\end{lemma}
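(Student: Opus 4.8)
The plan is to reduce the non-emptiness of $X(x,H_0^{\times})$ to a statement about adjoint orbits meeting $H_0^{\times}$, and then to assemble two facts already available in the excerpt. First I would invoke Remark \ref{Remark: Hessenberg in G/B}, which identifies $X(x,H_0^{\times})$ with the locally closed subvariety $\{gB\in G/B:\Adj_{g^{-1}}(x)\in H_0^{\times}\}$ of $G/B$. Consequently $X(x,H_0^{\times})$ is non-empty if and only if some $G$-conjugate of $x$ lies in $H_0^{\times}$, i.e. if and only if $x$ belongs to the adjoint saturation
$$G\cdot H_0^{\times}:=\{\Adj_g(w):g\in G,\ w\in H_0^{\times}\}.$$
Thus the entire lemma is equivalent to the set-theoretic identity $G\cdot H_0^{\times}=\mathfrak{g}_{\text{reg}}$.

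Next I would compute $G\cdot H_0^{\times}$ using the identity $B\cdot S_{\text{reg}}=H_0^{\times}$ established in the proof of Proposition \ref{Proposition: Open immersion}. Since $B\subseteq G$, saturating by $G$ collapses the intermediate $B$, giving
$$G\cdot H_0^{\times}=G\cdot(B\cdot S_{\text{reg}})=G\cdot S_{\text{reg}}.$$
It therefore suffices to prove $G\cdot S_{\text{reg}}=\mathfrak{g}_{\text{reg}}$. The inclusion $G\cdot S_{\text{reg}}\subseteq\mathfrak{g}_{\text{reg}}$ holds because the triple $(\xi,h,\eta)$ is regular, whence $S_{\text{reg}}\subseteq\mathfrak{g}_{\text{reg}}$, and because $\mathfrak{g}_{\text{reg}}$ is $G$-invariant. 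For the reverse inclusion $\mathfrak{g}_{\text{reg}}\subseteq G\cdot S_{\text{reg}}$, I would use the surjectivity of the bijection \eqref{Equation: Parametrization of regular orbits}: every regular adjoint orbit meets $S_{\text{reg}}$, so every regular element is $G$-conjugate to a point of $S_{\text{reg}}$. Combining the two inclusions yields $G\cdot S_{\text{reg}}=\mathfrak{g}_{\text{reg}}$, and hence $G\cdot H_0^{\times}=\mathfrak{g}_{\text{reg}}$, which is exactly the desired equivalence.

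Tracing through the chain of equivalences, $x\in\mathfrak{g}_{\text{reg}}$ if and only if $x\in G\cdot S_{\text{reg}}=G\cdot H_0^{\times}$, which holds if and only if $X(x,H_0^{\times})\neq\emptyset$, completing the argument. I do not anticipate a genuine obstacle here: all the substantive content has already been proved earlier, so the work is purely organizational. The one point requiring a little care is the equality $G\cdot S_{\text{reg}}=\mathfrak{g}_{\text{reg}}$, where both inclusions must be justified separately — the forward inclusion from regularity of the $\mathfrak{sl}_2$-triple and $G$-invariance of $\mathfrak{g}_{\text{reg}}$, and the reverse inclusion from surjectivity of Kostant's parametrization \eqref{Equation: Parametrization of regular orbits}.
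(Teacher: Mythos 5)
Your proof is correct, and every fact you invoke is indeed available at this point in the paper: Remark \ref{Remark: Hessenberg in G/B}, the identity $B\cdot S_{\text{reg}}=H_0^{\times}$ from the proof of Proposition \ref{Proposition: Open immersion}, the inclusion $S_{\text{reg}}\subseteq\mathfrak{g}_{\text{reg}}$ for a regular $\mathfrak{sl}_2$-triple, the $G$-invariance of $\mathfrak{g}_{\text{reg}}$, and the surjectivity of the parametrization \eqref{Equation: Parametrization of regular orbits}. There is no circularity, since Proposition \ref{Proposition: Open immersion} precedes this lemma and does not rely on it. The overall skeleton matches the paper's: both reduce, via Remark \ref{Remark: Hessenberg in G/B}, to showing that $x$ is regular if and only if $x$ is $G$-conjugate to an element of $H_0^{\times}$, and both handle the ``regular $\Rightarrow$ non-empty'' direction identically ($S_{\text{reg}}\subseteq H_0^{\times}$ plus Kostant's parametrization of regular orbits). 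Where you genuinely diverge is the converse: the paper disposes of it in one line by citing the external fact $H_0^{\times}\subseteq\mathfrak{g}_{\text{reg}}$ (Kostant's Lemma 10 in \cite{KostantLie}), whereas you rederive this inclusion internally from $H_0^{\times}=B\cdot S_{\text{reg}}\subseteq G\cdot S_{\text{reg}}\subseteq\mathfrak{g}_{\text{reg}}$. Your route buys self-containedness -- no new citation is needed beyond what the paper has already proved -- at the modest cost of leaning on a sub-claim established inside the proof of Proposition \ref{Proposition: Open immersion} rather than on a standalone stated result (that sub-claim itself rests on Kostant's Whittaker isomorphism \eqref{Equation: KostantWhittakerIso}, so ultimately both arguments trace back to Kostant, just through different lemmas). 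The paper's version is shorter; yours makes the logical dependency structure of the paper do more of the work.
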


\begin{proof}
In light of the remark made above, our task is to verify that \eqref{Equation: Open common Hessenberg} is non-empty if and only if $x\in\mathfrak{g}_{\text{reg}}$. The former condition is satisfied if and only if $\Adj_{g^{-1}}(x)\in H_0^{\times}$ for some $g\in G$, so that we are reduced to proving the following fact: $x\in\mathfrak{g}_{\text{reg}}$ if and only if $x$ is $G$-conjugate to an element of $H_0^{\times}$. Note that the forward implication follows from the inclusion $S_{\text{reg}}\subseteq H_0^{\times}$, together with the fact that each regular element is $G$-conjugate to a point in $S_{\text{reg}}$. To obtain the opposite implication, one simply notes that $H_0^{\times}$ is contained in $\mathfrak{g}_{\text{reg}}$ (see \cite[Lemma 10]{KostantLie}).
\end{proof}

\begin{corollary}\label{Corollary: Nonsingular affine}
	If $x\in\mathfrak{g}_{\emph{reg}}$, then $X(x,H_0^{\times})$ is an isotropic subvariety of $\Fam{H_0^{\times}}$ isomorphic to $Z_G(x)/Z$. In particular, $X(x,H_0^{\times})$ is a non-singular affine variety.
\end{corollary}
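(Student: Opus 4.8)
The plan is to transport the entire question to $G/Z\times S_{\text{reg}}$ via the symplectomorphism $\varphi$ of Theorem \ref{Theorem: Symplectomorphism to image}, where everything can be computed using the moment map $\mu$ and the parametrization of regular orbits. Since $\varphi$ carries $G/Z\times S_{\text{reg}}$ isomorphically onto $X(H_0^{\times})$ and satisfies $\mu_0\circ\varphi=\mu$ by the commutative diagram \eqref{Equation: Small commutative diagram}, I would first record that
\[ X(x,H_0^{\times})=X(H_0^{\times})\cap\mu_0^{-1}(x)=\varphi\big(\mu^{-1}(x)\big), \]
so that $\varphi$ restricts to an isomorphism $\mu^{-1}(x)\xrightarrow{\cong}X(x,H_0^{\times})$. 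Because $\varphi$ is a symplectomorphism onto $X(H_0^{\times})$, it suffices to prove that $\mu^{-1}(x)$ is isotropic in $G/Z\times S_{\text{reg}}$ and isomorphic to $Z_G(x)/Z$.

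For the isomorphism I would compute $\mu^{-1}(x)=\{(gZ,p):\Adj_g(p)=x\}$ directly. If $(gZ,p)\in\mu^{-1}(x)$, then $p\in S_{\text{reg}}$ lies in the adjoint orbit of the regular element $x$, so the bijection \eqref{Equation: Parametrization of regular orbits} forces $p$ to equal the unique point $p_0\in S_{\text{reg}}\cap\mathcal{O}(x)$. Fixing any $g_0$ with $\Adj_{g_0}(p_0)=x$, the condition $\Adj_g(p_0)=x$ becomes $g_0^{-1}g\in Z_G(p_0)$, whence $\mu^{-1}(x)=\{(g_0hZ,p_0):h\in Z_G(p_0)\}$ and $hZ\mapsto(g_0hZ,p_0)$ is an isomorphism $Z_G(p_0)/Z\xrightarrow{\cong}\mu^{-1}(x)$ (here $Z\subseteq Z_G(p_0)$ since $Z$ is central). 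Conjugation by $g_0$ then identifies $Z_G(p_0)/Z$ with $Z_G(x)/Z$, because $Z_G(x)=g_0Z_G(p_0)g_0^{-1}$ and conjugation fixes the central subgroup $Z$. Since $Z_G(x)$ is a closed, hence affine, subgroup of $G$ and $Z$ is a finite central, hence closed normal, subgroup, the quotient $Z_G(x)/Z$ is a linear algebraic group; it is therefore smooth and affine, yielding the final assertion. This simultaneously recovers Corollary \ref{Corollary: Implications for Hessenberg varieties}(i).

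For isotropy I would exploit the defining property $d(\mu^z)=\iota_{\tilde z}\omega$ of the moment map. Writing $m=(g_0Z,p_0)$, this relation gives the standard identity $\ker d_m\mu=(T_m(G\cdot m))^{\Omega_{\text{reg}}}$, the symplectic orthogonal of the tangent space to the $G$-orbit through $m$. Now $T_m\mu^{-1}(x)=\ker d_m\mu$, while the explicit description above shows $\mu^{-1}(x)\subseteq G\cdot m$ (every point of the fibre has second coordinate $p_0$ and differs from $m$ only by left translation in the first factor), so $T_m\mu^{-1}(x)\subseteq T_m(G\cdot m)$. Hence for $v,w\in T_m\mu^{-1}(x)$ we have $v\in(T_m(G\cdot m))^{\Omega_{\text{reg}}}$ and $w\in T_m(G\cdot m)$, forcing $(\Omega_{\text{reg}})_m(v,w)=0$. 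Since the whole fibre lies in the single orbit $G\cdot m$, the identical argument applies at every point of $\mu^{-1}(x)$, so $\mu^{-1}(x)$ is isotropic and therefore so is its image $X(x,H_0^{\times})$.

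I expect the delicate part to be the isotropy claim, and specifically the observation that $\mu^{-1}(x)$ lies inside a single $G$-orbit; once this is seen, isotropy follows immediately from the moment-map identity $\ker d_m\mu=(T_m(G\cdot m))^{\Omega_{\text{reg}}}$, and one avoids any argument about level sets of the integrable system $\tau_1,\dots,\tau_\ell$ (which would only give isotropy on the locus where the $\tau_i$ are functionally independent). The remaining points—checking that the set-theoretic bijections above are morphisms with algebraic inverses, and that $Z_G(x)/Z$ is smooth and affine—are routine.
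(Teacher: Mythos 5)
Your proposal is correct and follows essentially the same route as the paper: both identify $X(x,H_0^{\times})=\varphi(\mu^{-1}(x))$ via the diagram \eqref{Equation: Small commutative diagram}, and transport the isotropy and the isomorphism with $Z_G(x)/Z$ through the symplectomorphism of Theorem \ref{Theorem: Symplectomorphism to image}. The only difference is that where the paper simply cites \cite[Propositions 10 and 11]{CrooksRayan} for the two facts about the fibre $\mu^{-1}(x)$, you prove them directly---via the bijection \eqref{Equation: Parametrization of regular orbits} and the moment-map identity $\ker d_m\mu=(T_m(G\cdot m))^{\Omega_{\text{reg}}}$---which is precisely the content of those cited arguments.
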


\begin{proof}
	By arguments virtually identical to those given in the proofs of \cite[Proposition 10]{CrooksRayan} and \cite[Proposition 11]{CrooksRayan}, $\mu^{-1}(x)$ is an isotropic subvariety of $G/Z\times S_{\text{reg}}$ isomorphic to $Z_G(x)/Z$. Theorem \ref{Theorem: Symplectomorphism to image} then implies that $\varphi(\mu^{-1}(x))$ is an isotropic subvariety of $\Fam{H_0^{\times}}$ isomorphic to $Z_G(x)/Z$. At the same time, we can use \eqref{Equation: Small commutative diagram} and the fact that $\Fam{H_0^{\times}}$ is the image of $\varphi$ to obtain 
	$$\varphi(\mu^{-1}(x))=(\mu_0)^{-1}(x)\cap \Fam{H_0^{\times}}=X(x,H_0)\cap \Fam{H_0^{\times}}=X(x,H_0^{\times}).$$ This verifies the first sentence of our corollary. The second sentence follows immediately from the first.      
\end{proof}  

\begin{remark}
One knows that the $G/Z$-stabilizer (i.e. adjoint group-stabilizer) of each $x\in\mathfrak{g}_{\text{reg}}$ is irreducible, by \cite[Proposition 14]{KostantLie}. This stabilizer is precisely $Z_G(x)/Z$, so that Corollary \ref{Corollary: Nonsingular affine} implies the irreducibility of $X(x,H_0^{\times})$. An alternative is to note that $X(x,H_0)$ is itself irreducible (see \cite[Corollary 14]{PrecupTransformation}), so that the irreducibility of $X(x,H_0^{\times})$ follows from its being open in $X(x,H_0)$.  
\end{remark}

\begin{remark}
The fact that $Z_G(x)/Z$ identifies with an open subvariety of $X(x,H_0)$, $x\in\mathfrak{g}_{\text{reg}}$, appears to be known to experts. Nevertheless, Corollary \ref{Corollary: Nonsingular affine} describes the image of $Z_G(x)/Z\hookrightarrow X(x,H_0)$ in concrete terms. 
\end{remark}

\begin{remark}
Given $x\in\mathfrak{g}_{\text{reg}}$, let us use Remark \ref{Remark: Hessenberg in G/B} to regard both $X(x,H_0)$ and $X(x,H_0^{\times})$ as subvarieties of $G/B$. Corollary \ref{Corollary: Nonsingular affine} implies that the singular locus of $X(x,H_0)$ lies in the complement of $X(x,H_0^{\times})$, which means the following:  
\begin{align*}
\text{Sing}(X(x,H_0)) \subseteq \{ gB \in X(x,H_0) : (\Adj_{g^{-1}}(x))_{-\alpha} = 0 \text{ for some } \alpha\in\Pi \},
\end{align*}
where $(\Adj_{g^{-1}}(x))_{-\alpha}$ denotes the $\mathfrak{g}_{-\alpha}$ component of $\Adj_{g^{-1}}(x)$ under the decomposition \eqref{eq: def of special Hessenberg} of $H_0$. It would be interesting to determine the singular locus of $X(x,H_0)$ (cf.\ \cite{InskoYong}).
\end{remark}

\bibliographystyle{acm} 
\bibliography{Note}
\end{document}